\definecolor{darkgreen}{rgb}{0,0.5,0}
\DeclareMathOperator{\Id}{Id}
\newcommand{\Ppt}{\Pi_{h_t}^{\partial_t^2}}
\newcommand{\Pg}{\Pi^{\nabla}_{h_{\bx}}}
\newtheorem{theorem}{Theorem}[section]
\newtheorem{assumption}[theorem]{Assumption}
\newtheorem{lemma}[theorem]{Lemma}
\newtheorem{proposition}[theorem]{Proposition}
\newtheorem{corollary}[theorem]{Corollary}
\newtheorem{remark}[theorem]{Remark}
\DeclareFontFamily{U}{matha}{\hyphenchar\font45}
\DeclareFontShape{U}{matha}{m}{n}{
<-6> matha5 <6-7> matha6 <7-8> matha7
<8-9> matha8 <9-10> matha9
<10-12> matha10 <12-> matha12
}{}
\DeclareSymbolFont{matha}{U}{matha}{m}{n}
\DeclareFontFamily{U}{mathx}{\hyphenchar\font45}
\DeclareFontShape{U}{mathx}{m}{n}{
<-6> mathx5 <6-7> mathx6 <7-8> mathx7
<8-9> mathx8 <9-10> mathx9
<10-12> mathx10 <12-> mathx12
}{}
\DeclareSymbolFont{mathx}{U}{mathx}{m}{n}
\DeclareMathDelimiter{\vvvert} {0}{matha}{"7E}{mathx}{"17}%
\pgfplotsset{
  log x ticks with fixed point/.style={
      xticklabel={
        \pgfkeys{/pgf/fpu=true}
        \pgfmathparse{exp(\tick)}%
        \pgfmathprintnumber[fixed relative, precision=3]{\pgfmathresult}
        \pgfkeys{/pgf/fpu=false}
      }
  },
  log y ticks with fixed point/.style={
      yticklabel={
        \pgfkeys{/pgf/fpu=true}
        \pgfmathparse{exp(\tick)}%
        \pgfmathprintnumber[fixed relative, precision=3]{\pgfmathresult}
        \pgfkeys{/pgf/fpu=false}
    }
  }
}
\newcommand{\logLogSlopeTriangle}[5]
{
    \pgfplotsextra
    {
        \pgfkeysgetvalue{/pgfplots/xmin}{\xmin}
        \pgfkeysgetvalue{/pgfplots/xmax}{\xmax}
        \pgfkeysgetvalue{/pgfplots/ymin}{\ymin}
        \pgfkeysgetvalue{/pgfplots/ymax}{\ymax}

        \pgfmathsetmacro{\xArel}{#1}
        \pgfmathsetmacro{\yArel}{#3}
        \pgfmathsetmacro{\xBrel}{#1-#2}
        \pgfmathsetmacro{\yBrel}{\yArel}
        \pgfmathsetmacro{\xCrel}{\xArel}

        \pgfmathsetmacro{\lnxB}{\xmin*(1-(#1-#2))+\xmax*(#1-#2)} 
        \pgfmathsetmacro{\lnxA}{\xmin*(1-#1)+\xmax*#1} 
        \pgfmathsetmacro{\lnyA}{\ymin*(1-#3)+\ymax*#3} 
        \pgfmathsetmacro{\lnyC}{\lnyA+#4*(\lnxA-\lnxB)}
        \pgfmathsetmacro{\yCrel}{(\lnyC-\ymin)/(\ymax-\ymin)} 

        \coordinate (A) at (rel axis cs:\xArel,\yArel);
        \coordinate (B) at (rel axis cs:\xBrel,\yBrel);
        \coordinate (C) at (rel axis cs:\xCrel,\yCrel);

        \draw[#5]   (A)-- node[pos=0.5,anchor=north] {}
                    (B)-- 
                    (C)-- node[pos=0.5,anchor=west,] {\scriptsize$\boldsymbol{#4}$}
                    cycle;
    }
}
\newcommand{\logLogSlopeTriangleDual}[6]
{
    \pgfplotsextra
    {
        \pgfkeysgetvalue{/pgfplots/xmin}{\xmin}
        \pgfkeysgetvalue{/pgfplots/xmax}{\xmax}
        \pgfkeysgetvalue{/pgfplots/ymin}{\ymin}
        \pgfkeysgetvalue{/pgfplots/ymax}{\ymax}

        \pgfmathsetmacro{\xArel}{#1}
        \pgfmathsetmacro{\yArel}{#3}
        \pgfmathsetmacro{\xBrel}{#1 - #2}
        \pgfmathsetmacro{\yBrel}{\yArel}
        \pgfmathsetmacro{\xCrel}{\xArel}

        \pgfmathsetmacro{\lnxB}{\xmin*(1-(#1 - #2)) + \xmax*(#1 - #2)} 
        \pgfmathsetmacro{\lnxA}{\xmin*(1-#1) + \xmax*#1} 
        \pgfmathsetmacro{\lnyA}{\ymin*(1-#3) + \ymax*#3} 
        \pgfmathsetmacro{\lnyC}{\lnyA + #4*(\lnxA - \lnxB)}
        \pgfmathsetmacro{\yCrel}{(\lnyC - \ymin)/(\ymax - \ymin)} 

        \coordinate (A) at (rel axis cs:\xArel,\yArel);
        \coordinate (B) at (rel axis cs:\xBrel,\yBrel);
        \coordinate (C) at (rel axis cs:\xCrel,\yCrel);
        \coordinate (M) at ($(B)!0.5!(C)$);

        \draw[#5] (A) -- (B);
        \draw[#6] (B) -- (C);
        \draw[#5] (C) -- (A);

        \node[xshift=20pt] at (M) (N) {};

        \begin{scope}
            \clip ($(N)+(-2pt,-4pt)$) rectangle ($(N)+(0pt,4pt)$);
            \node at (N) {\scriptsize\textcolor{#5}{$\boldsymbol{#4}$}};
        \end{scope}
        \begin{scope}
            \clip ($(N)+(0pt,-4pt)$) rectangle ($(N)+(2pt,4pt)$);
            \node at (N) {\scriptsize\textcolor{#6}{$\boldsymbol{#4}$}};
        \end{scope}
    }
}
\DeclarePairedDelimiterX{\normiii}[1]
{\vvvert}
{\vvvert}
{\ifblank{#1}{\:\cdot\:}{#1}}
\renewcommand{\phi}{\varphi}
\newcommand{\bx}{\boldsymbol{x}}
\newcommand{\bp}{\boldsymbol{p}}
\renewcommand{\L}{\mathcal{L}}
\newcommand{\A}{\mathcal{A}}
\newcommand{\R}{\mathbb{R}}
\renewcommand{\P}{\mathbb{P}}
\newcommand{\N}{\mathbb{N}}
\newcommand{\bh}{\boldsymbol{h}}
\title{
Intrinsic unconditional stability in space--time isogeometric approximation of the acoustic wave equation\\ in second-order formulation}
\author{Matteo~Ferrari,~Ilaria~Perugia \vspace{0.5cm}}
\date{
Faculty of Mathematics, Universität Wien, Vienna, Austria \vspace{0.15cm}
}
\begin{document}
\maketitle

\begin{abstract}
\noindent
We present a novel space–time isogeometric discretization of the acoustic wave equation in second-order formulation that is intrinsically unconditionally stable. The method relies on a variational framework inspired by [Walkington 2014], with an exponential weight introduced in the time integrals. Conformity requires at least $C^1$ regularity in time and $C^0$ in space. The approximation in time is carried out using spline functions. The unconditional stability of the space--time method for conforming discrete spaces arises naturally from the variational structure itself, rather than from any artificial stabilization mechanisms. The analysis of an associated ordinary differential equation problem in time yields error estimates with respect to the mesh size that are suboptimal by one order in standard Sobolev norms. However, for certain choices of approximation spaces, it achieves quasi-optimal estimates. In particular, we prove this for $C^1$-regular splines of even polynomial degree, and provide numerical evidence suggesting that the same behavior holds for splines with maximal regularity, irrespective of the degree. The error analysis is extended to the full space--time problem with tensor-product approximation spaces. Numerical results are provided to support the theoretical findings and demonstrate the sharpness of the estimates. \vspace{0.2cm}
\end{abstract}
{\bf Keywords:} wave equation, space--time methods, spline discretization, unconditional stability.



\section{Introduction}

\noindent
The wave equation is a fundamental mathematical model used to describe phenomena such as sound, light and vibration. In this paper, we propose and analyze a space--time Galerin method for its discretization, which makes use of spline functions in time. This method is based on a second-order formulation. It is inspired by the Continuous Galerkin-Discontinuous Galerkin (CG-DG) method analyzed in \cite{Walkington2014} (see also~\cite{DongMascottoWang2024} and \cite{GomezNikolic2024}) but it is conforming and requires discrete spaces with at least~$C^1$ regularity in time. 

\noindent
We consider a space--time cylinder~$Q_T := \Omega \times (0,T)$, where~$\Omega \subset \R^d$ ($d = 1, 2, 3$) is a bounded Lipschitz domain with boundary~$\Gamma := \partial \Omega$, and~$T > 0$ a final propagation time. Given a source term~$F \in L^2(Q_T)$ and a wave velocity~$c = c(\bx) \ge c_0 >0$ for some $c_0 > 0$ independent of the time variable~$t$, with~$c \in L^\infty(\Omega)$, as model problem we consider the following Dirichlet problem for the wave equation:
\begin{equation} \label{eq:1}
	\begin{cases}
		\partial_t^2 U(\bx,t) - \text{div}(c^2(\bx) \nabla_{\bx} U(\bx,t)) = F(\bx,t) & (\bx,t) \in Q_T, 
		\\ U(\bx,t) = 0 & (\bx,t) \in \Gamma \times (0, T),
		\\ U(\bx,0) = \partial_t U(\bx,0) = 0 & \bx \in \Omega.
	\end{cases}
\end{equation}
Expanding the solution to~\eqref{eq:1} as a series of eigenfunctions of the spatial diffusion operator naturally leads to considering
the following initial value problem for a second-order ordinary differential equation (ODE):
\begin{equation} \label{eq:2}
    \begin{cases}
    \partial_t^2 u(t) + \mu u(t) = f(t) & t \in (0,T),
    \\ u(0) = \partial_t u(0) = 0,
    \end{cases}
\end{equation}
with a real parameter~$\mu>0$, and~$f \in L^2(0,T)$. We first focus on studying a discretization of the ODE~\eqref{eq:2} that is uniformly stable with respect to the parameter~$\mu$, analyzing its stability and 
convergence properties. Based on that, we then introduce and analyze a space--time, tensor-product discretization of the PDE problem~\eqref{eq:1}.

\noindent
This work falls within the very active line of research on space--time methods for the wave equation that are provably unconditionally stable (see, e.g., \cite{Walkington2014, Zank2021, HenningPalittaSimonciniUrban2022, LoscherSteinbachZank2023, BignardiMoiola2023, FraschiniLoliMoiolaSangalli2023, DongMascottoWang2024, Gomez2025, FuhrerGonzalezKarkulik2025, FerrariPerugiaZampa2025}). These developments are of particular importance, as stability guarantees play a crucial role in the design of efficient solvers, \emph{a~posteriori} error estimators and adaptivity, and strategies for complexity reduction.

\noindent
Various stable finite element methods have been proposed for~\eqref{eq:1}. One approach is to multiply the corresponding ODE~\eqref{eq:2} by a test function with a zero final condition, and then, after \emph{performing integration by parts}, impose the zero initial condition for the derivative in a weak sense. Then, methods that are unconditionally stable typically rely on adding non-consistent stabilization terms, which depend on the discrete space \cite{SteinbachZank2019, FraschiniLoliMoiolaSangalli2023, FerrariFraschini2024}, inserting a projection in time within the mass (ODE) or stiffness (PDE) term \cite{Zank2021}, or modifying the test space with a suitable operator, e.g., the Modified Hilbert transform \cite{LoscherSteinbachZank2023}, see also \cite{SteinbachZank2020}. An alternative approach \textit{avoids integrating by parts}. Within this approach, methods are classified into first-order (with the auxiliary variable~$v := \partial_t u$) and second-order. Various first-order methods are known to be unconditionally stable~\cite{BalesLasiecka1994,FrenchPeterson1996,FerrariFraschiniLoliPerugia2024, Gomez2025, FerrariPerugiaZampa2025}. However, they result in a system where the number of variables is doubled compared to a second-order scheme. 

\noindent
The method we propose involves multiplying the ODE in~\eqref{eq:2} by a test function with a zero initial condition, modified by a suitable operator that makes the bilinear form coercive in~$H^1(0,T)$, with a coercivity constant independent on the parameter~$\mu$. This transformation operator does not alter the support of the test functions and yields a variational formulation with exponential-weighted inner products. In that, our approach is related to the CG-DG method of~\cite{French1993}. To enforce the zero initial condition for the first derivative, we add a term involving the pointwise evaluation of the trial and test functions at~$t = 0$. Notably, the method remains consistent, as the additional term vanishes for the exact solution. Both trial and test functions are discretized with splines of regularity at least~$C^1$. Based on that, we introduce a space--time variational formulation of the PDE problem~\eqref{eq:1} and its tensor-product discretization. Compared to the spline-based methods in~\cite{FraschiniLoliMoiolaSangalli2023,FerrariFraschini2024}, which are based on~\cite{SteinbachZank2019}, the unconditional stability here is inherent to the variational structure itself, with no need for additional stabilization terms. The focus of this contribution is on the theoretical properties of the method. Computational aspects, such as its efficient implementation and its
reformulation as a time-marching scheme \cite{Tani2017, LangerZank2021, LoliSangalli2025} are beyond the scope of this work.

\noindent
Our approach shares conceptual similarities with the conforming Petrov–Galerkin space--time ultra-weak formulation presented in \cite{HenningPalittaSimonciniUrban2022}. In that work, the construction does not involve separating the variables: the test functions are~$C^1$-splines in space and time, while the trial functions are chosen to be discontinuous piecewise polynomials, to ensure discrete inf-sup stability. Another recent conforming inf-sup stable method has  been proposed in \cite{FuhrerGonzalezKarkulik2025} based on a least-squares approach, and a conforming coercive one (in space and time) in \cite{BignardiMoiola2023} based on Morawetz multipliers.
\medskip

\noindent
\textbf{Outline.}
The paper is structured as follows. 
In Section~\ref{sec:2}, we introduce and analyze a variational formulation for the ODE problem~\eqref{eq:2} and prove its coercivity. Then, in Section~\ref{sec:3}, we introduce a conforming discretization and prove coercivity and $h$-version error estimates, which are suboptimal by one order in standard Sobolev norms, when employing discrete spaces that satisfy standard approximation properties and inverse inequalities. Furthermore, we improve this result for discrete spaces that also admit a specific projection operator with quasi-optimal approximation properties, which is the case for $C^1$ splines with even polynomial degree. The related proofs are postponed to the two appendices. We demonstrate numerically the sharpness of the coercivity estimate and convergence rates. 
In Section~\ref{sec:4}, we extend the proposed method for the ODE to the full wave problem~\eqref{eq:1}, and prove its intrinsic stability. Finally, in Section \ref{sec:5}, building on the analysis of the ODE in Section \ref{sec:3}, we derive error estimate for the full tensor product space--time method, that are always quasi-optimal in space, and quasi-optimality in time is achieved under the same conditions as in the ODE case. The unconditional stability and error estimates are also validated numerically.

\section{Variational formulation of the ODE problem} \label{sec:2}

In this section, we present and analyze a new variational formulation of \eqref{eq:2}. Motivated by~\cite{Walkington2014}, we consider the continuous variational formulation: find~$u \in H^2_{0,\bullet}(0,T)$\footnote{Throughout this paper, we use standard notation for differential operators, function spaces, and norms; see, e.g.,~\cite{Brezis2010}.}
such that
\begin{equation} \label{eq:3}
    (\partial_t^2 u, \partial_t w)_{L^2(0,T)} + \partial_t u(0) \partial_t w(0) + \mu (u, \partial_tw)_{L^2(0,T)} = (f,\partial_t w)_{L^2(0,T)}
\end{equation}
for all~$w \in H_{0,\bullet}^2(0,T)$, where the Sobolev space~$H_{0,\bullet}^2(0,T)$ is defined as
\begin{equation*}
    H_{0,\bullet}^2(0,T) := \{u \in H^2(0,T) : u(0)=0\}.
\end{equation*}
Formulation~\eqref{eq:3} represents the global scheme corresponding to \cite[Equation (3.1)]{Walkington2014} for functions that are at least $C^1$-smooth.
\begin{remark}
Since~$\partial_t : H_{0,\bullet}^2(0,T) \to H^1(0,T)$ is a bijection,~\eqref{eq:3} is equivalent to: find $u \in H^2_{0,\bullet}(0,T)$ such that
\begin{equation} \label{eq:4}
    (\partial_t^2 u, v)_{L^2(0,T)} + \partial_t u(0) v(0) + \mu (u, v)_{L^2(0,T)} = ( f,v)_{L^2(0,T)}
\end{equation}
for all~$v \in H^1(0,T)$. In both~\eqref{eq:3} and~\eqref{eq:4}, the test functions are required to have regularity~$H^2$ and~$H^1$, respectively, to ensure that the traces at~$t=0$ are well-defined.
\end{remark}
\noindent
At least one solution of~\eqref{eq:3} exists, as the unique strong solution of~\eqref{eq:2} also satisfies~\eqref{eq:3}. It remains to establish the uniqueness.

\noindent
Define the bilinear form~$a_\mu : H_{0,\bullet}^2(0,T) \times H_{0,\bullet}^2(0,T) \to \R$ as
\begin{equation} \label{eq:5}
    a_\mu(u,w) := (\partial_t^2 u, \partial_t w)_{L^2(0,T)} + \partial_t u(0) \partial_t w(0) + \mu (u, \partial_tw)_{L^2(0,T)}.
\end{equation}
\begin{lemma} \label{lem:22}
For all~$u \in H_{0,\bullet}^2(0,T)$, the bilinear form defined in~\eqref{eq:5} satisfies
\begin{equation} \label{eq:6}
    a_\mu\big(u,\int_0^\bullet e^{-s/T} \partial_s u(s) \, \dd s\big) \ge \frac{1}{2eT} \left(\| \partial_t u\|^2_{L^2(0,T)} + \mu \| u \|^2_{L^2(0,T)}\right), 
\end{equation}
from which we also deduce the stability estimates
\begin{equation} \label{eq:7}
     \sup_{0 \neq  w \in H_{0,\bullet}^2(0,T)} \frac{a_\mu(u,w)}{\| \partial_t w\|_{L^2(0,T)}} \ge \frac{1}{2eT} \| \partial_t u\|_{L^2(0,T)}.
\end{equation}
\end{lemma}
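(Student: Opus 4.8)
The plan is to test the bilinear form with the explicit function $w(t) := \int_0^t e^{-s/T}\partial_s u(s)\,\dd s$ appearing in the statement and to turn $a_\mu(u,w)$ into a weighted energy identity. The first step is to record that, by the fundamental theorem of calculus, $\partial_t w(t) = e^{-t/T}\partial_t u(t)$, so that $w \in H^2_{0,\bullet}(0,T)$ (indeed $w(0)=0$), and moreover $\partial_t w(0) = \partial_t u(0)$. Substituting $\partial_t w = e^{-t/T}\partial_t u$ into the three terms of~\eqref{eq:5} reduces everything to integrals against the weight $e^{-t/T}$.

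The core of the argument is integration by parts in the first and third terms, using the pointwise identities $\partial_t^2 u\,\partial_t u = \tfrac12\partial_t(|\partial_t u|^2)$ and $u\,\partial_t u = \tfrac12\partial_t(|u|^2)$. For the first term this produces $\tfrac{1}{2T}\int_0^T e^{-t/T}|\partial_t u|^2\,\dd t$ together with the boundary contribution $\tfrac12 e^{-1}|\partial_t u(T)|^2 - \tfrac12|\partial_t u(0)|^2$; for the third term it produces $\tfrac{\mu}{2T}\int_0^T e^{-t/T}|u|^2\,\dd t$ together with $\tfrac{\mu}{2}e^{-1}|u(T)|^2$, the boundary term at $t=0$ vanishing because $u(0)=0$. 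Here I expect the only delicate point to be the bookkeeping of the boundary terms: the negative contribution $-\tfrac12|\partial_t u(0)|^2$ is exactly cancelled, and overcompensated, by the explicitly added term $\partial_t u(0)\partial_t w(0) = |\partial_t u(0)|^2$, leaving the nonnegative remainder $\tfrac12|\partial_t u(0)|^2$. This is precisely the role played by the pointwise evaluation term in the formulation, and isolating it cleanly is the crux of the computation.

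Once the identity $a_\mu(u,w) = \tfrac12 e^{-1}|\partial_t u(T)|^2 + \tfrac12|\partial_t u(0)|^2 + \tfrac{\mu}{2}e^{-1}|u(T)|^2 + \tfrac{1}{2T}\int_0^T e^{-t/T}(|\partial_t u|^2 + \mu|u|^2)\,\dd t$ is in hand, every term is nonnegative, so I discard the three boundary terms and bound the weight from below by $e^{-t/T}\ge e^{-1}$ on $[0,T]$; this gives~\eqref{eq:6}. Finally, to obtain~\eqref{eq:7}, I use this same $w$ as a competitor in the supremum: since $e^{-2t/T}\le 1$ yields $\|\partial_t w\|_{L^2(0,T)} \le \|\partial_t u\|_{L^2(0,T)}$, dividing~\eqref{eq:6} by $\|\partial_t w\|_{L^2(0,T)}$ and discarding the nonnegative term $\mu\|u\|^2_{L^2(0,T)}$ produces the claimed lower bound $\tfrac{1}{2eT}\|\partial_t u\|_{L^2(0,T)}$.
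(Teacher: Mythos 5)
Your proof is correct and follows essentially the same route as the paper: test with $w(t)=\int_0^t e^{-s/T}\partial_s u(s)\,\dd s$, integrate by parts using $\partial_t^2 u\,\partial_t u=\tfrac12\partial_t(|\partial_t u|^2)$ and $u\,\partial_t u=\tfrac12\partial_t(|u|^2)$, observe that the added term $|\partial_t u(0)|^2$ absorbs the negative boundary contribution, and bound the weight below by $e^{-1}$. The only cosmetic difference is that the paper works with a general weight $e^{-\kappa t}$ and optimizes $\kappa=1/T$ at the end, whereas you fix $\kappa=1/T$ from the outset; the resulting identity and estimates are identical.
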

\begin{proof}
For a given~$u \in H_{0,\bullet}^2(0,T)$ and a constant~$\kappa>0$, let consider~$w \in H_{0,\bullet}^2(0,T)$ such that~$\partial_t w(t) = e^{-\kappa t} \partial_t u(t)$. First, note that, since~$e^{-\kappa T} \le e^{-\kappa t} \le 1$ for~$t \in [0,T]$, we have
\begin{equation}\label{eq:8}
    \frac{1}{e^{\kappa T}} \| \partial_t u\|_{L^2(0,T)} \le \| \partial_t w\|_{L^2(0,T)} \le \| \partial_t u\|_{L^2(0,T)}.
\end{equation}
Then, using~$\partial_t u \partial_t^2 u = \frac{1}{2} \partial_t \left(|\partial_t u|^2\right)$, we calculate
\begin{equation} \label{eq:9}
\begin{aligned}
    (\partial_t^2 u, \partial_t w)_{L^2(0,T)} & = \int_0^T \partial_t^2 u(t) e^{-\kappa t} \partial_t u(t) \, \dd t 
    \\ & = \frac{\kappa}{2} \int_0^T e^{-\kappa t} |\partial_t u(t)|^2 \, \dd t + \frac{1}{2e^{\kappa T}} | \partial_t 
    u(T)|^2 - \frac{1}{2}| \partial_t u(0)|^2. 
\end{aligned}
\end{equation}
Similarly, using~$u \partial_t u = \frac{1}{2} \partial_t \left(|u|^2\right)$ and~$u(0)=0$, we calculate
\begin{align} \label{eq:10}
    (u, \partial_tw)_{L^2(0,T)} & = \int_0^T u(t) e^{-\kappa t} \partial_t u(t) \, \dd t = \frac{\kappa}{2}
    \int_0^T e^{-\kappa t} |u(t)|^2 \, \dd t + \frac{1}{2e^{\kappa T}} |u(T)|^2.
\end{align}
Then, combining~\eqref{eq:9} and~\eqref{eq:10}, we deduce
\begin{equation} \label{eq:11}
\begin{aligned}
    a_\mu(u,w) & \ge \frac{\kappa}{2} \int_0^T e^{-\kappa t} |\partial_t u(t)|^2 \, \dd t  + \mu \frac{\kappa}{2} \int_0^T e^{-\kappa t} |u(t)|^2 \, \dd t
     \\ & \ge \frac{\kappa}{2e^{\kappa T}} \left( \| \partial_t u\|^2_{L^2(0,T)} + \mu \| u\|^2_{L^2(0,T)} \right).
\end{aligned}
\end{equation}
The optimal choice for~$\kappa$ is then~$\kappa=1/T$. With this choice, we obtain~\eqref{eq:6} from which, using~\eqref{eq:8}, we also deduce~\eqref{eq:7}.
\end{proof}
\noindent
From~\eqref{eq:7}, we conclude uniqueness of the solution of~\eqref{eq:3} and, for this unique solution~$u \in H_{0,\bullet}^2(0,T)$, the stability estimate
\begin{equation} \label{eq:12}
    \| \partial_t u \|_{L^2(0,T)} \le 2eT \| f \|_{L^2(0,T)}.
\end{equation}
\begin{remark}
Estimate~\eqref{eq:11} deteriorates as $\kappa \to 0^+$. Therefore, the proof of Lemma~\ref{lem:22} does not guarantee that a direct discretization of problem \eqref{eq:3} is stable. For this reason, we will base our numerical method on a different writing of formulation~\eqref{eq:3}. Nevertheless, we have tested numerically a direct discretization of~\eqref{eq:3} with splines and it seems to be unconditionally stable. This will be investigated in future work. A similar situation arises in space--time boundary element methods for the wave equation, where the single-layer operator is proven to be coercive when composed with a Laplace transform with strictly positive argument \cite{BambergerHaDoung1986}. However, in practical computations, the parameter is typically set to zero (see~\cite[Theorem 2.4]{CostabelSayas2004}).
\end{remark}
\noindent
The proof of Lemma~\ref{lem:22} suggests to introduce the operator~$\L_T : H^2_{0,\bullet}(0,T) \to H^2_{0,\bullet}(0,T)$ defined as
\begin{equation} \label{eq:13}
    \L_T w(t) : = \int_0^t e^{-s/T} \partial_s w(s) \, \dd s.
\end{equation}
In the following proposition, we collect some properties of this operator, whose proofs are straightforward.
\begin{proposition} \label{prop:24}
The operator~$\L_T$ defined in~\eqref{eq:13} satisfies the following properties:
\begin{enumerate}
\item~$\L_T$ is actually well defined as an operator from~$H^1(0,T)$ to~$H^1(0,T)$,
\item for all~$v \in H^1(0,T)$,
\begin{equation*}
    \partial_t\L_T v(t)=e^{-t/T}\partial_t v(t),
\end{equation*}
\item~$\L_T$ is invertible in~$H^2_{0,\bullet}(0,T)$ with inverse
\begin{equation*}
    \L_T^{-1} u(t) = \int_0^t e^{s/T} \partial_s u(s) \, \dd s,
\end{equation*}
\item for all~$u \in H^2(0,T)$,
\begin{equation*}
    (\partial_t^2 u, \partial_t \L_T u)_{L^2(0,T)} + |\partial_t u(0)|^2 = \frac{1}{2T} (\partial_t u, \partial_t \L_T u)_{L^2(0,T)} + \frac{1}{2e} | \partial_t u(T)|^2 + \frac{1}{2} |\partial_t u(0)|^2,
\end{equation*}
\item for all~$v \in H^1(0,T)$ with $v(0)=0$,
\begin{equation*}
    (v, \partial_t \L_T v)_{L^2(0,T)} = \frac{1}{2T} \underbrace{(v, \L_T \int_0^\bullet v(\sigma) \, \dd \sigma)_{L^2(0,T)}}_{=\int_0^T e^{-t/T}|v(t)|^2 \, \dd t} + \frac{1}{2e} |v(T)|^2,
\end{equation*}
\item for all~$v \in H^1(0,T)$,
\begin{equation*}
    \frac{1}{e} \| \partial_t v \|^2_{L^2(0,T)} \le (\partial_t v, \partial_t \L_T v)_{L^2(0,T)} \le \| \partial_t v \|^2_{L^2(0,T)},
\end{equation*}
\item for all~$\lambda \in L^2(0,T)$,
\begin{equation*}
    \frac{1}{e} \| \lambda \|^2_{L^2(0,T)} \le \underbrace{(\lambda, \L_T \int_0^\bullet \lambda(\sigma) \, \dd \sigma)_{L^2(0,T)}}_{=\int_0^T e^{-t/T}|\lambda(t)|^2 \, \dd t} \le \|  \lambda \|^2_{L^2(0,T)},
\end{equation*}
\item for all~$v \in H^1(0,T)$,
\begin{equation*}
    \frac{1}{e} \| \partial_t v \|_{L^2(0,T)} \le \| \partial_t \L_T v\|_{L^2(0,T)} \le \| \partial_t  v \|_{L^2(0,T)}.
\end{equation*}
\end{enumerate}
\end{proposition}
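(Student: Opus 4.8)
The plan is to derive every item from a single computational engine, namely the derivative formula in item~(2): once $\partial_t\L_T v=e^{-t/T}\partial_t v$ is established, items~(1) and~(3)--(8) reduce to the fundamental theorem of calculus, one weighted integration by parts, and the elementary pointwise bounds $e^{-1}\le e^{-t/T}\le 1$ on $[0,T]$. Accordingly I would prove~(1) and~(2) together. For $v\in H^1(0,T)$ the integrand $s\mapsto e^{-s/T}\partial_s v(s)$ lies in $L^2(0,T)$, since $\partial_s v\in L^2(0,T)$ and the weight is bounded; hence $\L_T v$ is absolutely continuous, vanishes at $t=0$, and the fundamental theorem of calculus yields $\partial_t\L_T v=e^{-t/T}\partial_t v\in L^2(0,T)$, so $\L_T v\in H^1(0,T)$. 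Restricting to $w\in H^2_{0,\bullet}(0,T)$, the integrand is then $H^1$, so $\L_T w\in H^2(0,T)$ with $\L_T w(0)=0$, confirming that $\L_T$ maps $H^2_{0,\bullet}(0,T)$ into itself.

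Item~(3) then follows by direct substitution: using~(2), $\partial_s(\L_T w)=e^{-s/T}\partial_s w$, whence $\int_0^t e^{s/T}\partial_s(\L_T w)\,\dd s=\int_0^t\partial_s w\,\dd s=w(t)-w(0)=w(t)$ because $w(0)=0$; the reverse composition is identical. This simultaneously shows that the candidate inverse is well defined and maps $H^2_{0,\bullet}(0,T)$ to itself.

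The two identities~(4) and~(5) are the quantitative core, and they are precisely the computations already performed in~\eqref{eq:9}--\eqref{eq:10} of the proof of Lemma~\ref{lem:22}, specialized to $\kappa=1/T$. In both cases I would substitute $\partial_t\L_T(\cdot)=e^{-\bullet/T}\partial_t(\cdot)$ from~(2), use $g\,\partial_t g=\tfrac12\partial_t(g^2)$ with $g=\partial_t u$ for~(4) and $g=v$ for~(5), and integrate by parts against the weight, noting $\partial_t e^{-t/T}=-\tfrac1T e^{-t/T}$ so that the interior term becomes $+\tfrac1{2T}\int_0^T e^{-t/T}|g|^2\,\dd t$. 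The boundary contribution at $t=T$ produces the $\tfrac1{2e}|\cdot(T)|^2$ terms, since $e^{-T/T}=e^{-1}$; in~(5) the $t=0$ boundary term drops because $v(0)=0$, while in~(4) it is exactly the $|\partial_t u(0)|^2$ transferred to the left-hand side. The only bookkeeping to watch is the identification of the interior weighted integral with the bracketed inner product, $\int_0^T e^{-t/T}|g|^2\,\dd t=(g,e^{-\bullet/T}g)_{L^2(0,T)}$, which is the under-braced quantity once one writes $e^{-\bullet/T}g=\partial_t\L_T\!\int_0^\bullet g(\sigma)\,\dd\sigma$ via~(2).

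Finally, items~(6),~(7) and~(8) are immediate consequences of~(2) together with the weight bounds. Writing $(\partial_t v,\partial_t\L_T v)_{L^2(0,T)}=\int_0^T e^{-t/T}|\partial_t v|^2\,\dd t$ and using $e^{-1}\le e^{-t/T}\le 1$ gives~(6); the same argument with the antiderivative identity $\partial_t\L_T\!\int_0^\bullet\lambda=e^{-\bullet/T}\lambda$ yields the middle equality and the bounds of~(7); and for~(8) one squares, noting $\|\partial_t\L_T v\|_{L^2(0,T)}^2=\int_0^T e^{-2t/T}|\partial_t v|^2\,\dd t$ with $e^{-2}\le e^{-2t/T}\le 1$, so that the constant becomes $e^{-1}$ after taking square roots. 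I do not anticipate a genuine obstacle: the only delicate points are the regularity and well-definedness bookkeeping in~(1)--(3) and keeping careful track of the two boundary terms and the sign of the weight's derivative in the integration by parts for~(4)--(5).
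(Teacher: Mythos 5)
Your proposal is correct and matches what the paper intends: the paper states these proofs are straightforward and omits them, and your computations for items (4)--(5) are precisely the weighted integrations by parts already carried out in \eqref{eq:9}--\eqref{eq:10} of Lemma~\ref{lem:22} with $\kappa=1/T$, while the remaining items follow, as you argue, from the derivative identity in item (2) and the bounds $e^{-1}\le e^{-t/T}\le 1$. You also correctly read the under-braced inner products in items (5) and (7) as the weighted integrals $\int_0^T e^{-t/T}|\cdot|^2\,\dd t$ via $\partial_t\L_T\int_0^\bullet g(\sigma)\,\dd\sigma=e^{-\bullet/T}g$, which is the intended meaning.
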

\medskip

\noindent
We rewrite the variational problem~\eqref{eq:3} as follows:
\begin{tcolorbox}[
    colframe=black!50!white,
    colback=blue!5!white,
    boxrule=0.5mm,
    sharp corners,
    boxsep=0.5mm,
    top=0.5mm,
    bottom=0.5mm,
    right=0.25mm,
    left=0.1mm
]
    \begingroup
    \setlength{\abovedisplayskip}{0pt}
    \setlength{\belowdisplayskip}{0pt}
    \begin{equation} \label{eq:14}
    \text{find } u \in H^2_{0,\bullet}(0,T)     \text{ such that }
        a_\mu(u, \L_T w) = (f, \partial_t \L_T w)_{L^2(0,T)} \, \, \text{for all~} \, w \in H_{0,\bullet}^2(0,T).
    \end{equation}
    \endgroup
\end{tcolorbox}
\noindent
From a continuous point of view,~\eqref{eq:3} and~\eqref{eq:14} are the same problem. However, by virtue of Lemma~\ref{lem:22}, the bilinear form~$a_\mu(\cdot,\L_T \,\cdot)$ associated with~\eqref{eq:14} is coercive in the~$H^1$ norm, namely
\begin{equation} \label{eq:15}
    a_\mu(u,\L_T u) \ge \frac{1}{2eT}  \| \partial_t u\|^2_{L^2(0,T)} \quad \text{for all~} u \in H^2_{0,\bullet}(0,T).
\end{equation}
We highlight that the coercivity constant in~\eqref{eq:15} is independent of~$\mu$.

\noindent
In the next result, we show that continuity is only guaranteed in the~$H_{0,\bullet}^2(0,T)$ norm defined as
\begin{equation} \label{eq:16}
    \| u \|_{H^2_{0,\bullet}(0,T)} := \| \partial_t^2 u \|_{L^2(0,T)} + \frac{1}{T} \,
    \| \partial_t u \|_{L^2(0,T)},
\end{equation}
where the weight~$T^{-1}$ in front of the second term is introduced to ensure dimensional consistency. 
\begin{proposition} \label{prop:25}
For all~$u, w \in H_{0,\bullet}^2(0,T)$, we have 
\begin{equation} \label{eq:17}
    a_\mu(u,\L_T w) \le \left(1+\mu \frac{2T^{2}}{\pi}\right) \| u\|_{H_{0,\bullet}^2(0,T)} \|\partial_t w\|_{L^2(0,T)} + | \partial_t u(0) | | \partial_t w(0) |,
\end{equation}
or also
\begin{equation} \label{eq:18}
    a_\mu(u,\L_T w) \le \left(1+\mu \frac{2T^{2}}{\pi}\right) \| \partial_t u\|_{L^2(0,T)} \| w\|_{H_{0,\bullet}^2(0,T)} + \frac{1}{e} \, | \partial_t u(T) | | \partial_t w(T) |.
\end{equation}
\end{proposition}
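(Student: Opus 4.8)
The plan is to prove both inequalities starting from a single expansion of $a_\mu(u,\L_T w)$ and then splitting into two different bookkeeping strategies. Using the definition~\eqref{eq:5} together with property~2 of Proposition~\ref{prop:24}, namely $\partial_t\L_T w(t)=e^{-t/T}\partial_t w(t)$ and in particular $\partial_t\L_T w(0)=\partial_t w(0)$, I would first write
\[
a_\mu(u,\L_T w)=\int_0^T \partial_t^2 u\,e^{-t/T}\partial_t w\,\dd t+\partial_t u(0)\,\partial_t w(0)+\mu\int_0^T u\,e^{-t/T}\partial_t w\,\dd t .
\]
Throughout, the two recurring tools are the elementary bound $e^{-t/T}\le 1$ on $[0,T]$ and the sharp Poincaré inequality $\|u\|_{L^2(0,T)}\le \tfrac{2T}{\pi}\|\partial_t u\|_{L^2(0,T)}$, which is available because $u(0)=0$ and whose optimal constant $\tfrac{2T}{\pi}$ is exactly what produces the factor $\mu\,\tfrac{2T^2}{\pi}$ after combining with the weighted norm~\eqref{eq:16}.

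For~\eqref{eq:17} I would estimate each of the three terms directly, keeping the derivatives on $u$. Cauchy--Schwarz and $e^{-t/T}\le 1$ give $\int_0^T\partial_t^2 u\,e^{-t/T}\partial_t w\le \|\partial_t^2 u\|_{L^2}\|\partial_t w\|_{L^2}$, and the same applied to the $\mu$ term, followed by Poincaré, gives $\mu\int_0^T u\,e^{-t/T}\partial_t w\le \mu\,\tfrac{2T}{\pi}\|\partial_t u\|_{L^2}\|\partial_t w\|_{L^2}$. Invoking $\|\partial_t^2 u\|_{L^2}\le\|u\|_{H^2_{0,\bullet}(0,T)}$ and $\|\partial_t u\|_{L^2}\le T\|u\|_{H^2_{0,\bullet}(0,T)}$ from~\eqref{eq:16}, and leaving the boundary term as $|\partial_t u(0)|\,|\partial_t w(0)|$, collects everything into the stated bound~\eqref{eq:17}.

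For~\eqref{eq:18} the idea is instead to move one derivative from $u$ onto $\L_T w$ by integration by parts in the first integral, which is licit since $\partial_t u\in H^1(0,T)$ and $e^{-t/T}\partial_t w\in H^1(0,T)$. This yields the boundary contribution $[\partial_t u\,e^{-t/T}\partial_t w]_0^T$: its value at $t=0$ equals $\partial_t u(0)\partial_t w(0)$ and cancels exactly the explicit boundary term in $a_\mu$, while its value at $t=T$ contributes $\tfrac1e\,\partial_t u(T)\partial_t w(T)$ because $e^{-T/T}=e^{-1}$. The surviving volume integral features $\partial_t^2\L_T w=e^{-t/T}\big(\partial_t^2 w-\tfrac1T\partial_t w\big)$, and Cauchy--Schwarz with $e^{-t/T}\le 1$ bounds it by $\|\partial_t u\|_{L^2}\big(\|\partial_t^2 w\|_{L^2}+\tfrac1T\|\partial_t w\|_{L^2}\big)=\|\partial_t u\|_{L^2}\|w\|_{H^2_{0,\bullet}(0,T)}$. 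The $\mu$ term is treated as before, now estimating $\|\partial_t w\|_{L^2}\le T\|w\|_{H^2_{0,\bullet}(0,T)}$, which again yields the factor $\mu\,\tfrac{2T^2}{\pi}$; collecting terms gives~\eqref{eq:18}.

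The Cauchy--Schwarz/Poincaré chain is routine; the single delicate step is the integration by parts underlying~\eqref{eq:18}. The point to verify carefully there is the fate of the boundary data: that the $t=0$ terms cancel identically against the pointwise term built into $a_\mu$, and that the $t=T$ term retains the weight $e^{-1}$. This asymmetry is precisely what distinguishes the two continuity estimates and accounts for the different boundary remainders, $|\partial_t u(0)||\partial_t w(0)|$ in~\eqref{eq:17} versus $\tfrac1e|\partial_t u(T)||\partial_t w(T)|$ in~\eqref{eq:18}.
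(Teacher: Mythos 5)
Your proof is correct and follows essentially the same route as the paper: Cauchy--Schwarz with $e^{-t/T}\le 1$ and the sharp Poincaré inequality for~\eqref{eq:17}, and integration by parts on the first term (with the $t=0$ boundary contribution cancelling the pointwise term in $a_\mu$ and the $t=T$ contribution carrying the weight $e^{-1}$) for~\eqref{eq:18}. The paper merely states the second step in one line, whereas you spell out the details; the substance is identical.
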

\begin{proof}
With the Cauchy-Schwarz inequality and the properties in Proposition~\ref{prop:24},
we readily obtain 
\begin{align*}
    a_\mu(u,\L_T w) & = (\partial_t^2 u, \partial_t \L_T w)_{L^2(0,T)} + \partial_t u(0) \partial_t \L_T w(0) + \mu ( u, \partial_t \L_T w)_{L^2(0,T)} 
    \\ & \le \|\partial_t^2 u\|_{L^2(0,T)} \|\partial_t 
    \L_T w\|_{L^2(0,T)} + |\partial_t u(0)| |\partial_t w(0)| + \mu \|u \|_{L^2(0,T)} \|\partial_t \L_T w\|_{L^2(0,T)}
    \\ & \le \|\partial_t^2 u\|_{L^2(0,T)} \|\partial_t w\|_{L^2(0,T)} + |\partial_t u(0)| |\partial_t w(0)| + \mu \frac{2T}{\pi} \| \partial_t u \|_{L^2(0,T)} \|\partial_t w\|_{L^2(0,T)}, 
\end{align*}
where, in the last term, we used the Poincar\'e inequality (see \cite{PayneWeinberger1960}) with sharp constant $\|u \|_{L^2(0,T)}\le \frac{2T}{\pi}\| \partial_t u \|_{L^2(0,T)}$. This proves~\eqref{eq:17}. The continuity estimate~\eqref{eq:18} is obtained integrating by parts the first term of~$a_\mu$, and proceeding as in the estimate of~\eqref{eq:17}. 
\end{proof}

\begin{remark}
From the continuity property~\eqref{eq:17}, we also deduce the following continuity estimate in the~$H_{0,\bullet}^2(0,T)$ norm:
\begin{equation} \label{eq:19}
    a_\mu(u,\L_T w) \le 2T \left(1+\mu \frac{T^2}{\pi}\right)
    \| u\|_{H_{0,\bullet}^2(0,T)} \| w\|_{H_{0,\bullet}^2(0,T)}\qquad \text{for all~} u, w \in H_{0,\bullet}^2(0,T).
\end{equation}
In order to do so, we only need to estimate~$| \partial_t u(0)|$. By the fundamental theorem of calculus and the Cauchy-Schwarz inequality, we obtain
\begin{equation*}
\begin{split}
    |\partial_t u (0)| = \left\lvert-\int_0^t \partial_s^2 u (s) \, \dd s + \partial_t u (t)\right\rvert 
    \le t^{\nicefrac{1}{2}} \| \partial_t^2 u\|_{L^2(0,T)} + |\partial_t u (t)| \quad \text{for all~} t \in [0,T],
\end{split}    
\end{equation*}
so that
\begin{equation*}
    \int_0^T |\partial_t u (0)| \, \dd t \le \int_0^T  t^{\nicefrac{1}{2}} \, \dd t \, \| \partial_t^2 u\|_{L^2(0,T)} + \int_0^T |\partial_t u (t)| \, \dd t. 
\end{equation*}
Then, using again the Cauchy-Schwarz inequality, we get
\begin{equation} \label{eq:20}
     |\partial_t u (0)|  \le \frac{2}{3}T^{\frac{1}{2}}  \| \partial_t^2 u\|_{L^2(0,T)} + T^{-\frac{1}{2}} \|\partial_t u \|_{L^2(0,T)}\le T^{\frac{1}{2}}\|u\|_{H_{0,\bullet}^2(0,T)},
\end{equation}
from which we deduce~\eqref{eq:19}.
\end{remark}
\section{Discretization of the ODE problem} \label{sec:3}
In this section, we present a discretization of formulation~\eqref{eq:14} that is stable and well-suited for practical implementation.

\noindent
Let~$S_h^p(0,T) \subset H_{0,\bullet}^2(0,T)$ be a discrete space, depending on a mesh parameter~$h>0$ and a polynomial degree~$p \in \N_0$. We consider 
the conforming discretization of the variational formulation~\eqref{eq:14}:
\begin{tcolorbox}[
    colframe=black!50!white,
    colback=blue!5!white,
    boxrule=0.5mm,
    sharp corners,
    boxsep=0.5mm,
    top=0.5mm,
    bottom=0.5mm,
    right=0.25mm,
    left=0.1mm
]
    \begingroup
    \setlength{\abovedisplayskip}{0pt}
    \setlength{\belowdisplayskip}{0pt}
    \begin{equation} \label{eq:21}
    \text{find~} u_h \in S_h^p(0,T)~\text{such that~}a_\mu(u_h, \L_T w_h) = (f, \partial_t \L_T w_h )_{L^2(0,T)} \,\, \text{for all} \,\, w_h \in S_h^p(0,T).
\end{equation}
    \endgroup
\end{tcolorbox}
\noindent
Recalling the definitions of~$a_\mu$ in~\eqref{eq:5} and of~$\L_T$ in~\eqref{eq:13}, the explicit expression of the discrete formulation~\eqref{eq:21} is
\begin{equation} \label{eq:22}
\begin{aligned}
    \int_0^T\partial_t^2 u_h(t) \partial_t w_h(t) e^{-t/T} \, \dd t & +\partial_t u_h(0)\partial_t w_h(0) 
    \\ & +\mu\int_0^T u_h(t) \partial_t w_h(t) e^{-t/T} \, \dd t = \int_0^T f(t) \partial_t w_h(t) e^{-t/T} \, \dd t.
\end{aligned}
\end{equation}
\noindent
With the coercivity property~\eqref{eq:15} in the~$H^1$ norm transferring directly to the discrete level, we immediately have the well-posedness of the discrete problem~\eqref{eq:21}.
\begin{remark}
From an implementation perspective, the presence of the operator~$\L_T$ is not problematic. First of all, $\L_T$ preserves the structure of the involved matrices. This is because, if a function~$w_h \in S_h^p(0,T)$ has compact support, then~$\partial_t \L_T w_h$ maintains the same compact support. Additionally, standard techniques allow for the design of efficient Gaussian quadrature formulas for integrals of the form~$\int_{t_j}^{t_{j+1}} q(t) e^{-t/T} \dd t$, where~$q(t)$ is a polynomial. Consider, for example, a uniform mesh with nodes~$t_j = jh$ for~$j = 0, \ldots, N$, and~$h = T/N$. Suppose we have a quadrature formula that is exact up to a certain order of accuracy for computing the integral~$\int_0^h q(t) e^{-t/T} \dd t$. This formula can be modified to compute the integral on mesh intervals as follows:
\begin{equation*}
    \int_{t_j}^{t_{j+1}} q(t) e^{-t/T} \, \dd t = e^{-t_j/T} \int_0^h q(t + t_j) e^{-t/T} \, \dd t.
\end{equation*}
These integrals are computed exactly, without any quadrature error, by using a sufficient number of quadrature points in each subinterval. This approach justifies omitting the quadrature error in the subsequent stability and error estimates. Based on our numerical experience, we also observe that standard Gauss-Legendre quadrature does not change the numerical results and performs well in practice.
\end{remark}
\noindent
We make the following assumption.
\begin{assumption} \label{ass:32}
The finite dimensional spaces~$S_h^p(0,T) \subset H_{0,\bullet}^2(0,T)$ satisfy the following inverse inequalities and approximation properties:
\begin{itemize}
\item[i)] there exists a constant~$C_I>0$ depending on~$p$ but not on~$h$ such that
\begin{align}
     \label{eq:23} \| w_h \|_{H^2_{0,\bullet}(0,T)} & \le C_I h^{-1} \| \partial_t w_h \|_{L^2(0,T)} \quad \text{for all~} w_h \in S_h^p(0,T),
    \\ 
    \label{eq:24} \| \partial_t w_h \|_{L^2(0,T)} & \le C_I h^{-1} \| w_h \|_{L^2(0,T)} \quad \text{for all~} w_h \in S_h^p(0,T),
\end{align}
\item[ii)] there exists a constant~$C_a>0$ depending on~$p$ but not on~$h$ such that, for each $w \in H^2_{0,\bullet}(0,T) \cap H^{s+1}(0,T)$ and 
$\ell \le s \le p$, there is a function~$\widetilde{w}_h \in S_h^p(0,T)$ satisfying
\begin{align}
    \label{eq:25} \| \partial_t^\ell w - \partial_t^\ell \widetilde{w}_h \|_{L^2(0,T)} & \le C_a h^{s+1-\ell} \| \partial_t^{s+1} w \|_{L^2(0,T)}, \quad \ell = 0,1,2,
\end{align}
and, if $w \in W_\infty^{s+1}(0,T)$, also
\begin{align}
    \label{eq:26} \| \partial_t w - \partial_t \widetilde{w}_h \|_{L^\infty(0,T)} & \le C_a h^s \| \partial_t^{s+1} w \|_{L^\infty(0,T)}.
\end{align}
\end{itemize}
\end{assumption}
\noindent
Suitable choices for the spaces~$S_h^p(0,T)$ 
satisfying Assumption~\ref{ass:32} are the spaces generated by B-splines (see, e.g., \cite{PieglTiller2012, DeBoor2001}) with at least~$C^1$ regularity and vanishing at~$t=0$. Since B-splines are locally polynomials, for quasi-uniform meshes, the inverse inequalities~\eqref{eq:23} and~\eqref{eq:24} hold true. Furthermore, the approximation properties~\eqref{eq:25} and \eqref{eq:26} follow, e.g., from \cite[Theorem 49]{LycheManniSpeleers2018} (see also references therein), with~$\widetilde{w}_h\in S_h^p(0,T)$ a suitable quasi-interpolant of~$w\in C^0([0,T])$.
\medskip

\noindent
In the discrete case, the coercivity estimate~\eqref{eq:15} is shown to be sharp. Indeed, in Table~\ref{tab:1}, we report the discrete coercivity constants of the bilinear form~$a_\mu(\cdot, \L_T \cdot)$ in the~$H^1$ norm for spaces generated by B-splines of degree~$p = 2$ and~$p = 3$, with maximal regularity and vanishing at~$t = 0$.\footnote{All numerical test are performed with Matlab R2024a. The codes used for the numerical tests are available in the GitHub repository \cite{XTWavesExp}.} These constants are computed by solving a generalized eigenvalue problem for various values of~$T$,~$\mu$, and~$h$. Note that, when~$T = 1$, we have~$(2eT)^{-1} \approx 0.184$, and when~$T = 3$,~$(2eT)^{-1} \approx 0.061$. For small values of~$\mu$, the estimate in~\eqref{eq:15} is then sharp, as shown in Table~\ref{tab:1}. However, for large values of~$\mu$, the contribution of the second term in~\eqref{eq:6} is dominant, and by applying the inverse inequality~\eqref{eq:24}, we justify the quadratic decrease observed in Table~\ref{tab:1} when~$h$ is not yet sufficiently small.
\begin{table}[ht]
\centering
\begin{tabular}{lcccc|ccccc}
\hline 
$T=1$&  \multicolumn{2}{c}{$p=2$} & \multicolumn{2}{c|}{$p=3$} &~$T=3$ 
& \multicolumn{2}{c}{$p=2$} & \multicolumn{2}{c}{$p=3$} \\
\hline
~$h$ & \hspace{-0.1cm}$\mu=10$ & \hspace{-0.1cm}$\mu=10^5$ & \hspace{-0.1cm}$\mu=10$ & \hspace{-0.1cm}$\mu=10^5$ &~$h$ & \hspace{-0.1cm}$\mu=10$ & \hspace{-0.1cm}$\mu=10^5$ & \hspace{-0.1cm}$\mu=10$ & \hspace{-0.1cm}$\mu=10^5$ \\
\hline
    0.125 & 0.237 & 39.64 & 0.222 & 29.05 & 0.375 & 0.101 & 112.7 & 0.093 & 68.06
    \\ 0.063 & 0.206 & 10.39 & 0.200 & 10.33 & 0.188 & 0.075 & 26.57 & 0.072 & 20.10
    \\ 0.031 & 0.194 & 2.526 & 0.191 & 2.500 & 0.094 & 0.066 & 6.796 & 0.065 & 6.818
    \\ 0.016  & 0.188 & 0.730 & 0.187 & 0.729 & 0.047 & 0.063 & 1.645 & 0.063 & 1.650
    \\ 0.008 & 0.186 & 0.319 & 0.185 & 0.319 & 0.023 & 0.062 & 0.438 & 0.062 & 0.441
    \\ 0.004 & 0.185 & 0.219 & 0.185 & 0.219 & 0.012 & 0.062 & 0.153 & 0.062 & 0.154
    \\ 0.002 & 0.184 & 0.194 & 0.184 & 0.193 & 0.006 & 0.061 & 0.084 & 0.061 & 0.085
    \\ 0.001 & 0.184 & 0.187 & 0.184 & 0.187 & 0.003 & 0.061 & 0.067 & 0.061 & 0.067 \\
\hline
\end{tabular}
\caption{Discrete coercivity constant of~$a_\mu(\cdot,\L_T \cdot)$ in the~$H^1$ norm (see~\eqref{eq:15}) with maximal regularity splines of degree~$p=2,3$, by varying~$\mu=10,10^5$,~$T=1,3$, and the mesh size~$h$.}\label{tab:1}
\end{table}

\begin{remark}\label{rem:33}
Using the inverse inequality~\eqref{eq:23}, coercivity is established at the discrete level also in the~$H^2_{0,\bullet}(0,T)$ norm, although with a constant that depends on~$h$. More, precisely, we have
\begin{equation} \label{eq:27}
    a_\mu(u_h,\L_T u_h) \ge \frac{h^2}{2eT C_I^2}  \|  u_h\|^2_{H^2_{0,\bullet}(0,T)} \quad \text{for all~} u_h \in S_h^p(0,T).
\end{equation}
In experiments not reported here but available in the repository~\cite{XTWavesExp}, we verified that also estimate~\eqref{eq:27} is sharp.
\end{remark}

\subsection{Error analysis} \label{sec:31}
Recalling the continuity of the bilinear form~$a_\mu(\cdot,\L_T \cdot)$ established in Proposition~\ref{prop:25}, we derive error estimates with respect to the mesh size in the~$H^1$ norm.
\begin{theorem} \label{theo:34}
Let the discrete spaces~$S_h^p(0,T) \subset H^2_{0,\bullet}(0,T)$ satisfy Assumption~\ref{ass:32}. Let $u \in H^2_{0,\bullet}(0,T)$ be the unique solution of problem~\eqref{eq:3}, and let~$u_h \in S_h^p(0,T)$ be the unique solution of problem~\eqref{eq:21}. Then, if~$u \in H^2_{0,\bullet}(0,T) \cap H^{s+1}(0,T)$ for~$1 \le s \le p$, we have
\begin{equation} \label{eq:28}
    \| \partial_t u - \partial_t u_h \|_{L^2(0,T)} \le C_\mu h^{s-1} \| \partial_t^{s+1} u \|_{L^2(0,T)},
\end{equation}
for a positive constant~$C_\mu$ depending on~$p, T, \mu$ but not on~$h$.
\end{theorem}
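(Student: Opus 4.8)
The plan is to run a Céa-type argument, but the non-symmetric, norm-gap structure of the form $a_\mu(\cdot,\L_T\,\cdot)$ forces us to treat the point-evaluation boundary term with care. First I would record Galerkin orthogonality: since $S_h^p(0,T) \subset H^2_{0,\bullet}(0,T)$, testing the continuous problem~\eqref{eq:14} and the discrete problem~\eqref{eq:21} against the same $w_h \in S_h^p(0,T)$ and subtracting gives $a_\mu(u - u_h, \L_T w_h) = 0$ for all $w_h \in S_h^p(0,T)$. Next, let $\widetilde u_h \in S_h^p(0,T)$ be the quasi-interpolant from Assumption~\ref{ass:32}, and split $u - u_h = \eta - e_h$ with $\eta := u - \widetilde u_h$ and $e_h := u_h - \widetilde u_h \in S_h^p(0,T)$.

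Applying the discrete coercivity~\eqref{eq:15} to $e_h$, writing $e_h = \eta - (u-u_h)$ in the first slot, and using orthogonality to annihilate the term $a_\mu(u-u_h,\L_T e_h)$, I reduce everything to the interpolation error:
\[
\frac{1}{2eT}\|\partial_t e_h\|^2_{L^2(0,T)} \le a_\mu(e_h,\L_T e_h) = a_\mu(\eta,\L_T e_h).
\]
The continuity estimate~\eqref{eq:17} then bounds the right-hand side by $\big(1 + \mu\tfrac{2T^2}{\pi}\big)\|\eta\|_{H^2_{0,\bullet}(0,T)}\|\partial_t e_h\|_{L^2(0,T)} + |\partial_t\eta(0)|\,|\partial_t e_h(0)|$. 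The first summand is routine: by~\eqref{eq:25} with $\ell = 1,2$ one obtains $\|\eta\|_{H^2_{0,\bullet}(0,T)} \le C h^{s-1}\|\partial_t^{s+1}u\|_{L^2(0,T)}$ (for $h \le T$), which already produces the claimed rate.

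The main obstacle is the boundary term $|\partial_t\eta(0)|\,|\partial_t e_h(0)|$, because the crude bounds — e.g.~\eqref{eq:20} combined with the inverse inequality~\eqref{eq:23} — cost a full power $h^{-1}$ on $e_h$ and would degrade the estimate to $h^{s-2}$. The fix is to balance two sharp local estimates on the first mesh interval $(0,h)$. For the discrete factor I would use the point-evaluation inverse estimate for splines, $|\partial_t e_h(0)| \le C h^{-1/2}\|\partial_t e_h\|_{L^2(0,T)}$, which follows from a scaling argument on $(0,h)$ together with the piecewise-polynomial structure and quasi-uniformity underlying~\eqref{eq:24}. For the interpolation factor I would use the scaled trace inequality $|\partial_t\eta(0)|^2 \le C\big(h^{-1}\|\partial_t\eta\|^2_{L^2(0,h)} + h\,\|\partial_t^2\eta\|^2_{L^2(0,h)}\big)$ and insert~\eqref{eq:25} with $\ell = 1$ and $\ell = 2$; both contributions scale like $h^{2s-1}$, giving $|\partial_t\eta(0)| \le C h^{s-1/2}\|\partial_t^{s+1}u\|_{L^2(0,T)}$. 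Multiplying the two, the half-powers cancel and the boundary term is $\le C h^{s-1}\|\partial_t^{s+1}u\|_{L^2(0,T)}\|\partial_t e_h\|_{L^2(0,T)}$, matching the first summand. Notably this uses only the $L^2$-type property~\eqref{eq:25}, not the $L^\infty$ estimate~\eqref{eq:26}, consistent with the $L^2$ norm on the right-hand side of~\eqref{eq:28}.

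Combining these bounds and dividing through by $\|\partial_t e_h\|_{L^2(0,T)}$ yields $\|\partial_t e_h\|_{L^2(0,T)} \le C_\mu h^{s-1}\|\partial_t^{s+1}u\|_{L^2(0,T)}$, where $C_\mu$ absorbs $2eT$, the continuity constant $1 + \mu\tfrac{2T^2}{\pi}$, and the trace/inverse constants, hence depends on $p,T,\mu$ but not on $h$. A final triangle inequality $\|\partial_t(u-u_h)\|_{L^2(0,T)} \le \|\partial_t\eta\|_{L^2(0,T)} + \|\partial_t e_h\|_{L^2(0,T)}$, with $\|\partial_t\eta\|_{L^2(0,T)} \le C_a h^s\|\partial_t^{s+1}u\|_{L^2(0,T)}$ of higher order, delivers~\eqref{eq:28}.
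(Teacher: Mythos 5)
Your proof is correct, and while it follows the same Céa-type skeleton as the paper (coercivity~\eqref{eq:15} on the discrete error $e_h=u_h-\widetilde u_h$, Galerkin orthogonality, continuity, interpolation estimates, triangle inequality), it diverges in two substantive ways. First, you invoke the continuity bound~\eqref{eq:17}, which places the $H^2_{0,\bullet}$ norm on the interpolation error $\eta$ and the endpoint term at $t=0$; the paper instead uses~\eqref{eq:18}, puts the $H^2_{0,\bullet}$ norm on the discrete error, and must then pay a factor $h^{-1}$ through the inverse inequality~\eqref{eq:23}, recovering $h^{s-1}$ from the $H^1$ interpolation error $h^{s}$. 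Your route reaches $h^{s-1}$ directly from $\|\eta\|_{H^2_{0,\bullet}}\lesssim h^{s-1}$ without any inverse inequality on the volume term. Second, and more interestingly, your balanced treatment of the endpoint term — $|\partial_t e_h(0)|\lesssim h^{-1/2}\|\partial_t e_h\|_{L^2}$ against $|\partial_t\eta(0)|\lesssim h^{s-1/2}\|\partial_t^{s+1}u\|_{L^2}$ via the scaled trace inequality on $(0,h)$ — uses only the $L^2$ approximation property~\eqref{eq:25}, whereas the paper bounds the discrete factor crudely by $h^{-1}\|\partial_t e_h\|_{L^2}$ (the analogue of~\eqref{eq:20} at $t=T$ plus~\eqref{eq:23}) and compensates with the $L^\infty$ interpolation estimate of $\partial_t(u-\widetilde u_h)$, which implicitly leans on~\eqref{eq:26} and hence on more regularity of $u$ than the theorem states; in that respect your argument is tighter and better matched to the hypothesis $u\in H^{s+1}(0,T)$. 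One small caveat: the point-evaluation bound $|\partial_t e_h(0)|\le Ch^{-1/2}\|\partial_t e_h\|_{L^2}$ is not literally contained in Assumption~\ref{ass:32} as you suggest (the assumption does not postulate a piecewise-polynomial structure); however, this is not a gap, since the same bound follows from the trace inequality on $(0,h)$ applied to $\partial_t e_h$ combined with the global inverse inequality~\eqref{eq:23}, so your argument can be closed entirely within the paper's stated assumptions (and it holds trivially for the spline spaces of interest).
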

\begin{proof}
For any~$w_h \in S_h^p(0,T)$, we use the triangle inequality to deduce
\begin{equation} \label{eq:29}
    \| \partial_t u - \partial_t u_h \|_{L^2(0,T)} \le \| \partial_t u - \partial_t w_h \|_{L^2(0,T)} + \|  \partial_t u_h - \partial_t w_h \|_{L^2(0,T)}.
\end{equation}
Then, from the coercivity property~\eqref{eq:15}, the linearity of~$\L_T$, and consistency, we get
\begin{align*}
    \frac{1}{2 e T} \| \partial_t u_h - \partial_t w_h \|^2_{L^2(0,T)} \le a_\mu(u_h - w_h, \L_T (u_h - w_h))
     = a_\mu(u - w_h, \L_T (u_h - w_h)),
\end{align*}
which, together with the continuity in~\eqref{eq:18}, the analogue of~\eqref{eq:20} for the final time, and the inverse inequality \eqref{eq:23}, gives
\begin{equation} \label{eq:30}
\begin{aligned}
    \frac{1}{2eT} \| \partial_t u_h - \partial_t w_h \|_{L^2(0,T)}^2 & \le \left( 1 + \mu \frac{2T^{2}}{\pi} \right) \| \partial_t u - \partial_t w_h \|_{L^2(0,T)} \| u_h - w_h \|_{H_{0,\bullet}^2(0,T)}
    \\  & \qquad\qquad\qquad + \,| (\partial_t u - \partial_t w_h )(T) |\, | (\partial_t u_h - \partial_t w_h )(T) |
    \\ & \le C_\mu h^{-1} \left(\| \partial_t u - \partial_t w_h \|_{L^2(0,T)} \right.
    \\ & \qquad\qquad\qquad \left. +\| \partial_t u - \partial_t w_h \|_{L^\infty(0,T)} \right) \| \partial_t u_h - \partial_t w_h \|_{L^2(0,T)}.
\end{aligned}
\end{equation}
Finally, combining~\eqref{eq:29} and~\eqref{eq:30} with the choice~$w_h = \widetilde{u}_h$ such that~\eqref{eq:25} is satisfied gives~\eqref{eq:28}.
\end{proof}
\noindent
From~\eqref{eq:28}, we immediately derive error estimates in the~$H^2$ norm.
\begin{corollary} \label{cor:35}Under the same assumptions as in Theorem~\ref{theo:34},
we have 
\begin{equation} \label{eq:31}
    \| u - u_h \|_{H^2_{0,\bullet}(0,T)} \le C_\mu h^{s-2} \| \partial_t^{s+1} u \|_{L^2(0,T)}.
\end{equation}
\end{corollary}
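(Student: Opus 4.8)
The plan is to derive the $H^2_{0,\bullet}(0,T)$ estimate from the already-established $H^1$-type bound of Theorem~\ref{theo:34} by inserting the quasi-interpolant and paying a single inverse power of $h$ on the discrete part of the error. First I would fix the approximant $\widetilde{u}_h \in S_h^p(0,T)$ furnished by Assumption~\ref{ass:32} (the same one used in the proof of Theorem~\ref{theo:34}) and split, by the triangle inequality,
\[
\| u - u_h \|_{H^2_{0,\bullet}(0,T)} \le \| u - \widetilde{u}_h \|_{H^2_{0,\bullet}(0,T)} + \| \widetilde{u}_h - u_h \|_{H^2_{0,\bullet}(0,T)}.
\]
The two summands are then handled by genuinely different tools: the first is a pure approximation term, the second a discrete quantity to which the inverse inequality applies.

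For the approximation term, recalling the definition of the norm in~\eqref{eq:16}, I would bound $\| \partial_t^2(u - \widetilde{u}_h) \|_{L^2(0,T)}$ by~\eqref{eq:25} with $\ell = 2$ and $\tfrac{1}{T}\| \partial_t(u - \widetilde{u}_h) \|_{L^2(0,T)}$ by~\eqref{eq:25} with $\ell = 1$, obtaining contributions of order $h^{s-1}$ and $h^{s}$ respectively; the former dominates, so this summand is of order $h^{s-1}\| \partial_t^{s+1} u \|_{L^2(0,T)}$. For the discrete term, since $\widetilde{u}_h - u_h \in S_h^p(0,T)$, the inverse inequality~\eqref{eq:23} gives
\[
\| \widetilde{u}_h - u_h \|_{H^2_{0,\bullet}(0,T)} \le C_I h^{-1} \| \partial_t(\widetilde{u}_h - u_h) \|_{L^2(0,T)},
\]
and I would control the right-hand side by a further triangle inequality through $\partial_t u$, estimating $\| \partial_t(\widetilde{u}_h - u) \|_{L^2(0,T)}$ by~\eqref{eq:25} with $\ell = 1$ (order $h^{s}$) and $\| \partial_t(u - u_h) \|_{L^2(0,T)}$ by Theorem~\ref{theo:34} (order $h^{s-1}$). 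Multiplying the resulting order-$h^{s-1}$ bound by the factor $h^{-1}$ from the inverse inequality produces the term of order $h^{s-2}$. Collecting the two summands, noting that $h^{s-2}$ dominates $h^{s-1}$, and absorbing all constants into $C_\mu$ yields~\eqref{eq:31}.

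There is no real obstacle here: the statement is a corollary in the literal sense, and the single order lost relative to Theorem~\ref{theo:34} is exactly the price of the inverse inequality~\eqref{eq:23}, which upgrades the $L^2$ control of $\partial_t(\widetilde{u}_h - u_h)$ to control of the full $H^2_{0,\bullet}(0,T)$ norm. The only points requiring minor care are the bookkeeping of which powers of $h$ dominate (using $h^{s} \le C\,h^{s-1}$ over a bounded range of mesh sizes) and ensuring that the \emph{same} $\widetilde{u}_h$ is used throughout, so that the difference $\widetilde{u}_h - u_h$ indeed lies in $S_h^p(0,T)$ and the inverse inequality is legitimately applicable.
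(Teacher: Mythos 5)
Your proposal is correct and follows essentially the same route as the paper: a triangle inequality through the quasi-interpolant $\widetilde{u}_h$, the inverse inequality~\eqref{eq:23} applied to the discrete difference $\widetilde{u}_h - u_h$, a second triangle inequality through $\partial_t u$, and then the combination of Theorem~\ref{theo:34} with the approximation property~\eqref{eq:25}. The bookkeeping of the powers of $h$ and the identification of the dominant $h^{s-2}$ term match the paper's argument.
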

\begin{proof}
For any~$w_h \in S_h^p(0,T)$, we use the triangle inequality and the inverse inequality~\eqref{eq:23} to deduce
\begin{equation*}
\begin{aligned}
    \| u - u_h \|_{H_{0,\bullet}^2(0,T)} & \le \|u -  w_h \|_{H_{0,\bullet}^2(0,T)} + \|  u_h -  w_h \|_{H_{0,\bullet}^2(0,T)}
    \\ & \le \| u - w_h \|_{H_{0,\bullet}^2(0,T)} + C_I h^{-1} \|  \partial_t u_h - \partial_t w_h \|_{L^2(0,T)} 
    \\ & \le \| u - w_h \|_{H_{0,\bullet}^2(0,T)} + C_I h^{-1} \left( \| \partial_t u - \partial_t u_h \|_{L^2(0,T)} +  \|\partial_t  u - \partial_t w_h \|_{L^2(0,T)} \right).
\end{aligned}
\end{equation*}
Then, estimate~\eqref{eq:31} is obtained by selecting~$w_h = \widetilde{u}_h$ and using~\eqref{eq:28}, together with the approximation property~\eqref{eq:25}.
\end{proof}
\noindent
To establish error estimates in the~$L^2$ norm, we analyze the adjoint problem associated with~\eqref{eq:14}.
\begin{lemma} \label{lem:36}
Given~$f \in H^1(0,T)$ with~$f(T)=0$, consider the problem: find~$\phi \in H^2_{0,\bullet}(0,T)$ such that
\begin{equation} \label{eq:32}
    a_\mu(w,\L_T \phi) = (f, \partial_t \L_T w)_{L^2(0,T)} \quad \text{for all~} w \in H_{0,\bullet}^2(0,T).
\end{equation}
Then, there exists a unique solution~$\phi \in H^3(0,T) \cap H^2_{0,\bullet}(0,T) \cap W_\infty^3(0,T)$ satisfying
\begin{align}
     \label{eq:33} \| \partial_t^3 \phi \|_{L^2(0,T)} & \le C_\mu \| \partial_t f\|_{L^2(0,T)}, \quad \| \partial_t^3 \phi \|_{L^\infty(0,T)} \le C_\mu \| \partial_t f\|_{L^2(0,T)},
\end{align}
with a constant~$C_\mu > 0$ depending only on~$T$ and $\mu$.
\end{lemma}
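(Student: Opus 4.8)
The plan is to reduce the variational adjoint problem~\eqref{eq:32} to an explicit terminal value problem for a linear ODE, solve it, and read off the regularity and the bounds~\eqref{eq:33}. Abbreviating $g := \partial_t \L_T \phi = e^{-t/T}\partial_t \phi$ (so that $g(0)=\partial_t\phi(0)$, $\partial_t \phi = e^{t/T} g$, and $\phi(0)=0$), I would first expand $a_\mu(w,\L_T\phi) = (\partial_t^2 w, g)_{L^2(0,T)} + \partial_t w(0)\,g(0) + \mu (w,g)_{L^2(0,T)}$ and integrate the first term by parts twice. The boundary term $-\partial_t w(0)\,g(0)$ produced by the first integration cancels exactly the term $\partial_t w(0)\,\partial_t\phi(0)$ in $a_\mu$, while $w(0)=0$ removes the boundary term at $t=0$ of the second integration; this leaves $a_\mu(w,\L_T\phi) = \partial_t w(T)\,g(T) - w(T)\,\partial_t g(T) + \int_0^T w\,(\partial_t^2 g + \mu g)\,\dd t$. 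On the right-hand side I would integrate $(f,\partial_t\L_T w)_{L^2(0,T)} = \int_0^T f\,e^{-t/T}\,\partial_t w \,\dd t$ by parts, discarding both boundary terms thanks to $f(T)=0$ and $w(0)=0$, to obtain $-\int_0^T e^{-t/T}\big(\partial_t f - T^{-1} f\big)\, w\,\dd t$.

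Comparing the two expressions, first for $w$ with $w(T)=\partial_t w(T)=0$ to fix the interior equation and then for general $w$ to read off the endpoint data, yields the strong form: $g$ solves the constant-coefficient terminal value problem $\partial_t^2 g + \mu g = -e^{-t/T}(\partial_t f - T^{-1} f)$ on $(0,T)$ with $g(T)=\partial_t g(T)=0$. Variation of parameters with the fundamental system $\{\cos(\sqrt\mu\,t),\sin(\sqrt\mu\,t)\}$ gives existence and uniqueness; integrating the forcing by parts once more, using $f(T)=0$ and that the kernel $\sin(\sqrt\mu(s-t))$ vanishes at $s=t$, removes the derivative of $f$ and produces the clean representation $g(t)=\int_t^T \cos(\sqrt\mu(s-t))\,e^{-s/T} f(s)\,\dd s$. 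Then $\phi(t)=\int_0^t e^{r/T} g(r)\,\dd r$ defines a function in $H^2_{0,\bullet}(0,T)$ that one verifies solves~\eqref{eq:32}, and uniqueness at the variational level follows since any weak solution produces, by the same manipulations, a solution of the uniquely solvable terminal problem (alternatively, from the inf-sup stability of $a_\mu(\cdot,\L_T\cdot)$ inherited from~\eqref{eq:15}).

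For the $L^2$ estimate I would differentiate $\partial_t \phi = e^{t/T}g$ to get $\partial_t^3\phi = e^{t/T}\big(\partial_t^2 g + 2T^{-1}\partial_t g + T^{-2} g\big)$ and substitute $\partial_t^2 g = -\mu g - e^{-t/T}(\partial_t f - T^{-1} f)$. Since $f(T)=0$ gives the Poincaré bound $\|f\|_{L^2(0,T)}\le \tfrac{2T}{\pi}\|\partial_t f\|_{L^2(0,T)}$, the forcing obeys $\|e^{-t/T}(\partial_t f - T^{-1}f)\|_{L^2(0,T)}\le C_T\|\partial_t f\|_{L^2(0,T)}$; a standard energy estimate for the terminal problem (multiply the ODE by $\partial_t g$ and integrate from $t$ to $T$) then controls $\|g\|_{L^2(0,T)}$, $\|\partial_t g\|_{L^2(0,T)}$ and hence $\|\partial_t^2 g\|_{L^2(0,T)}$ by $C_\mu\|\partial_t f\|_{L^2(0,T)}$, which yields the $L^2$ part of~\eqref{eq:33}.

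The $L^\infty$ bound is where I expect the real difficulty. From the derivative-free representation of $g$ one gets the pointwise identity $\partial_t^2\phi = -f + e^{t/T}\big(\sqrt\mu\, I_s + T^{-1} g\big)$ with $I_s := \int_t^T \sin(\sqrt\mu(s-t)) e^{-s/T} f\,\dd s$, whose terms are all bounded by $C_\mu\|f\|_{L^\infty(0,T)} \le C_\mu\sqrt T\,\|\partial_t f\|_{L^2(0,T)}$ (again using $f(T)=0$); so the second derivative is controlled in $L^\infty$ by $\|\partial_t f\|_{L^2}$ directly from the kernel. The genuine obstacle is the top-order derivative: passing from $\partial_t^2\phi$ to $\partial_t^3\phi$ reintroduces $\partial_t^2 g$, hence the forcing contribution $e^{-t/T}\partial_t f$, which the integration-by-parts trick no longer eliminates because it sits outside any integral. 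Bounding this in $L^\infty$ by $\|\partial_t f\|_{L^2}$ is the step I would examine most carefully, since it appears to need either extra smoothness of $f$ (so that $\partial_t f\in L^\infty$) or the $L^\infty$ estimate to be understood at the level of $\partial_t^2\phi$; the kernel representation, rather than energy methods, is what makes this endpoint analysis tractable.
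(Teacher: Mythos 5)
Your reduction is essentially the paper's own argument: your $g=e^{-t/T}\partial_t\phi=\partial_t\L_T\phi$ is exactly the auxiliary function the paper calls $y=\partial_t z$ with $z=\L_T\phi$, and both routes arrive at the same terminal-value problem $\partial_t^2 g+\mu g=-\partial_t\big(e^{-t/T}f\big)$, $g(T)=\partial_t g(T)=0$, and the same cosine-kernel representation (incidentally, your sign for $g$ is the correct one; the displayed formula for $y$ in the paper carries the opposite sign, which is immaterial for the estimates). The differences are minor: the paper constructs the strong solution first and deduces uniqueness from the coercivity \eqref{eq:15} --- which is also your fallback argument, and is the cleaner route, since deriving the strong form from the weak one presupposes regularity of $\phi$ that is not available a priori --- and it obtains the $L^2$ bound by substituting the explicit expressions for $y$, $\partial_t y$, $\partial_t^2 y$ and applying Cauchy--Schwarz together with $f(t)=-\int_t^T\partial_s f(s)\,\dd s$, where you propose an equivalent energy estimate. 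Up to and including the first bound in \eqref{eq:33}, your proposal is complete and correct.

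The gap is the one you flagged yourself: you do not establish the $W^3_\infty$ regularity nor the $L^\infty$ bound in \eqref{eq:33}, so as a proof of the lemma as stated the argument is unfinished. Your diagnosis of the obstacle, however, is accurate, and it is worth making precise. From $\partial_t^3\phi=e^{t/T}\big(\partial_t^2 g+2T^{-1}\partial_t g+T^{-2}g\big)$ and $\partial_t^2 g=-\mu g+T^{-1}e^{-t/T}f-e^{-t/T}\partial_t f$ one gets $\partial_t^3\phi=-\partial_t f+R$, where every term of $R$ involves only pointwise values or integrals of $f$, hence $\|R\|_{L^\infty(0,T)}\le C_\mu\|\partial_t f\|_{L^2(0,T)}$ using $|f(t)|\le\sqrt{T}\,\|\partial_t f\|_{L^2(0,T)}$. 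Consequently $\partial_t^3\phi\in L^\infty(0,T)$ if and only if $\partial_t f\in L^\infty(0,T)$, and the bound $\|\partial_t^3\phi\|_{L^\infty(0,T)}\le C_\mu\|\partial_t f\|_{L^2(0,T)}$ cannot hold for general $f\in H^1(0,T)$ with $f(T)=0$. The paper's own proof does not resolve this point either: it substitutes the expressions for $\partial_t y$ and $\partial_t^2 y$ and invokes Cauchy--Schwarz, which controls all contributions except the bare term $e^{-t/T}\partial_t f$, i.e., exactly the term you isolated. So what you found is not a defect of your approach relative to the paper's, but a place where the second estimate in \eqref{eq:33} needs to be restated --- for instance for $\partial_t^3\phi+\partial_t f$, or with $\|\partial_t f\|_{L^\infty(0,T)}$ on the right-hand side under the additional assumption $f\in W^1_\infty(0,T)$ --- with corresponding adjustments where it is invoked in Corollary~\ref{cor:38}, whose choice of $f$ does have $\partial_t f$ continuous.
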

\begin{proof}
Due to the coercivity property~\eqref{eq:15}, if a solution~$\phi\in H_{0,\bullet}^2(0,T)$ of problem~\eqref{eq:32} exists, it is unique. Let us define~$z := \L_T \phi$. Recalling the invertibility of $\L_T$ in $H_{0,\bullet}^2(0,T)$ (see Proposition \ref{prop:24}), problem~\eqref{eq:32} is then equivalent to finding~$z \in H^2_{0,\bullet}(0,T)$ such that
\begin{equation*}
    a_\mu(w,z) = (f, \partial_t \L_T w)_{L^2(0,T)} \quad \text{for all~} w \in H_{0,\bullet}^2(0,T),
\end{equation*}
or equivalently, using~$\widetilde{f}(t) := e^{-t/T} f(t)$ and \textit{2.} of Proposition~\ref{prop:24}, to finding~$z \in H^2_{0,\bullet}(0,T)$ such that
\begin{equation} \label{eq:34}
    (\partial_t z,\partial_t^2 w)_{L^2(0,T)} + \partial_t z(0) \partial_t w(0) + \mu (\partial_t z,w)_{L^2(0,T)} = (\widetilde{f}, \partial_t w)_{L^2(0,T)},
\end{equation}
for all $w \in H_{0,\bullet}^2(0,T)$. We formally apply integration by parts to both sides to rewrite the problem as
\begin{equation} \label{eq:35}
    (\partial_t^3 z,w)_{L^2(0,T)} - \partial_t^2 z(T) w(T)  + \partial_t z(T) \partial_t w(T) + \mu (\partial_t z,w)_{L^2(0,T)} = -(\partial_t \widetilde{f},  w)_{L^2(0,T)},
\end{equation}
where we have also used~$\widetilde{f}(T)=w(0)=0$. From~\eqref{eq:35}, if~$z \in H^3(0,T)$ is the solution of the ODE problem
\begin{equation*}
    \begin{cases}
    \partial_t^3 z(t) + \mu \partial_t z(t) = - \partial_t \widetilde{f}(t) & t \in (0,T), \\
    \partial_t^2 z(T) = \partial_t z(T) = z(0) = 0,
    \end{cases}
\end{equation*}
then $z$ is also a solution of~\eqref{eq:34}. This ODE problem can be solved explicitly by introducing $y := \partial_t z$, for which the problem becomes
\begin{equation*}
    \begin{cases}
    \partial_t^2 y(t) + \mu y(t) = - \partial_t \widetilde{f}(t) & t \in (0,T), \\
    \partial_t y(T) = y(T) = 0,
    \end{cases}
\end{equation*}
whose unique solution is given by
\begin{equation*}
    y(t) = \frac{1}{\sqrt{\mu}} \int_t^T \sin(\sqrt{\mu}(s-t)) 
    \partial_s \widetilde{f} (s) \, \dd s = -\int_t^T \cos(\sqrt{\mu}(s-t)) e^{-s/T} f (s) \, \dd s.
\end{equation*}
Then, using~$z(0)=0$, the unique solution~$z$ of~\eqref{eq:34}, is~$z(t)=\int_0^t y(s)\,\dd s$. We deduce that~$\phi = \L_T^{-1}z$ is the (unique) solution of~\eqref{eq:32}. More explicitly, from~\textit{3.} of Proposition~\ref{prop:24}, 
\begin{equation*}
    \phi(t) = \int_0^t e^{s/T} \partial_s z (s) \, \dd s = \int_0^t e^{s/T} y (s) \, \dd s.
\end{equation*}
We obtain~\eqref{eq:33} by computing
\begin{align*}
    \partial_t^3 \phi(t) & = \frac{1}{T^2} e^{t/T} y(t) + \frac{2}{T} e^{t/T} \partial_t y(t) +  e^{t/T} \partial_t^2 y(t),
\end{align*}
substituting the expression of~$\partial_t y$ and~$\partial_t^2 y$ given by
\begin{align*}
    \partial_t y(t) & =e^{-t/T}f(t)-\sqrt{\mu}\int_t^T\sin(\sqrt{\mu}(s-t))e^{-s/T}f(s)\,\dd s,
    \\ \partial_t^2 y(t) & =-\mu y(t)- \partial_t \widetilde{f}(t) =-\mu y(t) + \frac{1}{T} e^{-t/T} f(t) - e^{-t/T} \partial_t f(t),
\end{align*}
and the expression of~$y$, and applying the Cauchy-Schwarz inequality, taking also into account that~$f(t)=-\int_t^T \partial_s f(s)\,\dd s$.
\end{proof}
\begin{remark} \label{rem:37}
The result of Lemma \ref{lem:36} is valid also when $\mu=0$, namely, given $f \in H^1(0,T)$ with $f(T)=0$, the problem
\begin{equation*}
    \text{find~} \phi \in H^2_{0,\bullet}(0,T) \text{~such that} \quad a_0(w,\L_T \phi) = (f,\partial_t \L_T w)_{L^2(0,T)} \quad \text{for all~} w \in H^2_{0,\bullet}(0,T)
\end{equation*}
has a unique solution $\phi \in H^3(0,T) \cap H^2_{0,\bullet}(0,T) \cap W^3_\infty(0,T)$ satisfying
\begin{align*}
    \| \partial_t^3 \phi \|_{L^2(0,T)} & \le C \| \partial_t f\|_{L^2(0,T)}, \quad \| \partial_t^3 \phi \|_{L^\infty(0,T)} \le C \| \partial_t f\|_{L^2(0,T)},
\end{align*}
with a constant~$C > 0$ depending only on~$T$.
\end{remark}

\noindent
In the following corollary, we derive error estimates in the~$L^2$ norm.
\begin{corollary} \label{cor:38}
Under the same assumptions as in Theorem~\ref{theo:34},
we have 
\begin{align} \label{eq:36}
    \| u - u_h \|_{L^2(0,T)} &\le C_\mu h^s \| \partial_t^{s+1} u \|_{L^2(0,T)},
\end{align}
where~$C_\mu$ is a positive constant depending on~$p, T, \mu$ but independent of~$h$.
\end{corollary}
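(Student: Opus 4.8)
The goal is an $L^2$ estimate of optimal order $h^s$, one order higher than the $H^1$ estimate $h^{s-1}$ of Theorem~\ref{theo:34}. The plan is to run a duality (Aubin--Nitsche) argument using the adjoint problem analyzed in Lemma~\ref{lem:36}. First I would set the error $e := u - u_h$ and seek to represent $\|e\|_{L^2(0,T)}$ via a suitable dual solution. Since the relevant coercivity and continuity are stated in terms of $\partial_t(\cdot)$ rather than the function itself, the natural functional to test against is not $e$ directly but something involving its derivative; I expect the correct choice is to take the right-hand side of the adjoint problem \eqref{eq:32} to be $f = e$ (or a primitive thereof), exploiting that the adjoint data enters through $(f, \partial_t \L_T w)_{L^2(0,T)}$, which for $w = e$ reproduces a weighted $L^2$ pairing controlling $\|e\|_{L^2}$ up to the equivalence constants in Proposition~\ref{prop:24}.

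The key steps, in order, would be: (1)~choose the dual datum so that testing the adjoint problem \eqref{eq:32} with $w = e$ yields a lower bound by $\|e\|_{L^2(0,T)}^2$ (or $\|\partial_t e\|_{L^2}^2$ recovered through the exponential weight); here one must check the hypotheses of Lemma~\ref{lem:36}, namely that the datum lies in $H^1(0,T)$ and vanishes at $T$ — this may require integrating $e$ in time and arranging the endpoint condition, which is a point to handle carefully. (2)~Use Galerkin orthogonality: since $u_h$ solves \eqref{eq:21} and $u$ solves the consistent continuous problem, $a_\mu(e, \L_T w_h) = 0$ for all $w_h \in S_h^p(0,T)$; subtract an arbitrary discrete interpolant $\phi_h$ of the dual solution $\phi$ to obtain $a_\mu(e, \L_T \phi) = a_\mu(e, \L_T(\phi - \phi_h))$. (3)~Bound the right-hand side by continuity \eqref{eq:18}, getting a product of $\|\partial_t e\|_{L^2(0,T)}$ with $\|\phi - \phi_h\|_{H^2_{0,\bullet}(0,T)}$ plus the endpoint term $\tfrac1e |\partial_t e(T)|\,|\partial_t(\phi-\phi_h)(T)|$.

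Then I would invoke the approximation properties \eqref{eq:25} (and \eqref{eq:26} for the $L^\infty$ endpoint term) applied to the dual solution $\phi$: since $\phi \in H^3(0,T) \cap W^3_\infty(0,T)$ by Lemma~\ref{lem:36}, choosing $\ell = 1, 2$ and $s = 2$ in \eqref{eq:25} gives $\|\phi - \phi_h\|_{H^2_{0,\bullet}(0,T)} \le C_a h \,\|\partial_t^3 \phi\|_{L^2(0,T)}$, and the pointwise term contributes another factor of $h$. The dual stability bound \eqref{eq:33} then converts $\|\partial_t^3 \phi\|$ into $C_\mu$ times the norm of the dual datum, which is comparable to $\|e\|_{L^2(0,T)}$. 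Combining, I obtain $\|e\|_{L^2}^2 \lesssim h\,\|\partial_t e\|_{L^2}\,\|e\|_{L^2}$, and after cancelling one power of $\|e\|_{L^2}$ and inserting the $H^1$ estimate \eqref{eq:28}, the extra $h$ upgrades $h^{s-1}$ to $h^s$.

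The main obstacle I anticipate is the bookkeeping around the endpoint terms and the exponential weight. The coercivity/continuity estimates are adapted to $\L_T$-weighted pairings, so testing the adjoint with $w = e$ produces $\int_0^T e^{-t/T}|\cdot|^2\,\dd t$ rather than the clean $\|e\|_{L^2}^2$; one must use the two-sided bounds in items \textit{6.}--\textit{7.} of Proposition~\ref{prop:24} to pass between the weighted and unweighted norms (paying only a factor $e$). Equally delicate is verifying that the chosen dual datum satisfies $f(T)=0$ and $f \in H^1(0,T)$ so that Lemma~\ref{lem:36} applies; if the natural candidate is not $H^1$, I would instead take a primitive of the error or a regularized datum and absorb the mismatch, checking that the endpoint contribution in \eqref{eq:18} is genuinely higher order via \eqref{eq:26}. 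Once these weight- and trace-compatibility issues are resolved, the remaining estimates are routine applications of Lemma~\ref{lem:36}, Proposition~\ref{prop:25}, and the approximation bounds in Assumption~\ref{ass:32}.
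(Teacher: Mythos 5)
Your proposal follows essentially the same route as the paper's proof: a duality argument with the adjoint problem of Lemma~\ref{lem:36} (whose datum is indeed a weighted primitive of the error, $f(t)=e^{t/T}\int_t^T(u-u_h)(s)\,\dd s$, chosen exactly so that $f(T)=0$ and $(f,\partial_t\L_T(u-u_h))_{L^2(0,T)}=\|u-u_h\|^2_{L^2(0,T)}$), followed by Galerkin orthogonality, continuity, the approximation properties \eqref{eq:25}--\eqref{eq:26} applied to $\phi$, and the dual stability bounds \eqref{eq:33}. The only deviation is that you invoke the continuity estimate \eqref{eq:18} where the paper uses \eqref{eq:17}, which transposes the power count from $h^{s-2}\cdot h^{2}$ (via Corollary~\ref{cor:35} and second-derivative approximation of $\phi$) to $h^{s-1}\cdot h$ (via Theorem~\ref{theo:34} and $\|\phi-\widetilde\phi_h\|_{H^2_{0,\bullet}(0,T)}$); this closes equally well, provided you control the endpoint term $|\partial_t(u-u_h)(T)|$ through the analogue of \eqref{eq:20} and Corollary~\ref{cor:35}, as you indicate.
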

\begin{proof}
In order to prove~\eqref{eq:36}, we consider the adjoint problem~\eqref{eq:32} with the source term
\begin{equation*}
    f(t) := e^{t/T} \int_t^T (u-u_h)(s) \, \dd s.
\end{equation*}
Since~$f \in H^1(0,T)$ and~$f(T)=0$, by Lemma~\ref{lem:36}, the problem is well-posed, and its unique solution~$\phi \in H^3(0,T) \cap H_{0,\bullet}^2(0,T) \cap W_\infty^3(0,T)$ satisfies the stability estimates in~\eqref{eq:33}. From~$\partial_t f(t) = T^{-1} f(t) - e^{t/T} (u-u_h)(t)$, the estimates in~\eqref{eq:33} become
\begin{equation} \label{eq:37}
\begin{aligned}
     \| \partial_t^3 \phi \|_{L^2(0,T)} & \le C_\mu \| u - u_h \|_{L^2(0,T)},
    \qquad \| \partial_t^3 \phi \|_{L^\infty(0,T)} & \le C_\mu \| u-u_h \|_{L^2(0,T)}.
\end{aligned}
\end{equation}
Moreover, using~\textit{2.} of Proposition~\eqref{prop:24}, 
$f(T)=0$ and~$(u-u_h)(0)=0$,
we compute
\begin{equation*}
    (f, \partial_t \L_T (u-u_h))_{L^2(0,T)} 
    = \| u - u_h \|^2_{L^2(0,T)}.
\end{equation*}
Therefore, using the latter relation, the auxiliary problem~\eqref{eq:32}, and employing consistency, the continuity in~\eqref{eq:17}, as well as estimate 
\eqref{eq:20}, we deduce, with $\widetilde{\phi}_h$ as in Assumption \ref{ass:32},
\begin{align*}
    \| u - u_ h \|_{L^2(0,T)}^2 & = (f, \partial_t \L_T( u -   u_h))_{L^2(0,T)} 
    \\ & = a_{\mu}(u-u_h, \L_T \phi)
    \\ & =  a_{\mu}(u - u_h, \L_T(\phi - \widetilde{\phi}_h))
    \\ & \le C_\mu \| u - u_h \|_{H_{0,\bullet}^2(0,T)} \left( \| \partial_t \phi - \partial_t \widetilde{\phi}_h \|_{L^2(0,T)} + | \partial_t \phi(0) - \partial_t \widetilde{\phi}_h(0)|\right).
\end{align*}
From this, we obtain~\eqref{eq:36} by employing the approximation estimates in~\eqref{eq:25} and~\eqref{eq:26} (with $s=2$), the stability estimates in~\eqref{eq:37}, and Corollary~\ref{cor:35}. 
\end{proof}

\subsection{Quasi-optimal convergence rates} \label{sec:32}

The convergence rate in~\eqref{eq:28} established solely under Assumption~\ref{ass:32} on the discrete spaces is suboptimal by one order, as are the rates in the~$H^2$ and~$L^2$ norms from Corollary~\ref{cor:35} and Corollary~\ref{cor:38}. For general spaces of splines, this result is sharp. To demonstrate this, we apply the numerical scheme~\eqref{eq:21} using $C^1$-continuous splines on a uniform mesh and vary the polynomial degree. The test data are given by
\begin{equation} \label{eq:38}
    T = 5, \quad u(t) = t^2 e^{-t}, \quad \text{and} \quad \mu = 10^5.
\end{equation}
In Figure~\ref{fig:1}, we present the relative errors in the~$H^2$,~$H^1$ and~$L^2$ norms. We observe that the estimates obtained in Section \ref{sec:31} are sharp when~$p$ is odd. However, for even values of~$p$, the numerical convergence rates are found to be quasi-optimal.

\begin{figure}[h!]
\centering
\begin{minipage}{0.33\textwidth}
\begin{tikzpicture}
\begin{groupplot}[group style={group size=1 by 1},height=6cm,width=5.7cm, every axis label={font=\normalsize}, ylabel style={font=\footnotesize}]
        
    \nextgroupplot[xmode=log, 
                log x ticks with fixed point,
                xtick={0.05,0.1,0.2,0.4},
                title =~$H^2$-error,
                ytick={0.1,0.001,0.00001,0.0000001,0.000000001},
                xlabel={$h$},
                ymode=log,
                legend pos=south west, 
                legend style={nodes={scale=1, transform shape}},]
    \pgfplotstableread{
        x       p2              p3              p4              p5              p6
        0.625      0.362322893791104   0.145649442729327   0.008804509120339   0.001459495746554   0.000034532319166
        0.3125     0.140130122395653   0.066755542175366   0.001000857285800   0.000174322874695   0.000001038031848
        0.15625    0.058184656595298   0.031924528851351   0.000115892963181   0.000021063302973   0.000000031878582
        0.078125     0.026183443226848   0.015366178808486   0.000014090277717   0.000002412301376   0.000000001003107
    } \HtwoErrors
    \addplot[orange, mark=*,mark size=3, line width=0.02cm] table[x=x, y=p2] \HtwoErrors;
    \addplot[cyan, mark=triangle*,mark size=3, line width=0.02cm] table[x=x, y=p3] \HtwoErrors;
    \addplot[magenta, mark=pentagon*,mark size=3, line width=0.02cm] table[x=x, y=p4] \HtwoErrors;
    \addplot[blue, mark=square*, mark size=3,line width=0.02cm] table[x=x, y=p5] \HtwoErrors;
    \addplot[darkgreen, mark=diamond*,mark size=3, line width=0.02cm] table[x=x, y=p6] \HtwoErrors;
    \logLogSlopeTriangleDual{0.37}{0.25}{0.725}{1}{orange}{cyan};
    \logLogSlopeTriangleDual{0.35}{0.25}{0.345}{3}{magenta}{blue};
    \logLogSlopeTriangle{0.31}{0.155}{0.09}{5}{darkgreen};
\end{groupplot}
\end{tikzpicture}
\end{minipage}
\begin{minipage}{0.33\textwidth}
\begin{tikzpicture}
\begin{groupplot}[group style={group size=1 by 1},height=6cm,width=5.7cm, every axis label={font=\normalsize}, ylabel style={font=\footnotesize}]
        
    \nextgroupplot[xmode=log, 
                log x ticks with fixed point,
                xtick={0.05,0.1,0.2,0.4},
                ytick={0.1,0.001,0.00001,0.0000001,0.000000001,0.00000000001},
                title =~$H^1$-error,
                xlabel={$h$},
                ymode=log,
                legend pos=south west, 
                legend style={nodes={scale=1, transform shape}},]
    \pgfplotstableread{
        x       p2              p3              p4              p5              p6
        0.625      0.207666245121251   0.057270067471367   0.002257894125842   0.000279155412738   0.000004805144825
        0.3125     0.048727875280743   0.015356114211205   0.000139188625994   0.000017747115971   0.000000074996771
        0.15625    0.011062792695288   0.003989595015838   0.000008386560069   0.000001104272028   0.000000001171755
        0.078125     0.002584030654227   0.001001820982063   0.000000520812440   0.000000064112432   0.000000000019155
    } \HoneErrors
    \addplot[orange, mark=*, mark size=3, line width=0.02cm] table[x=x, y=p2] \HoneErrors;
    \addplot[cyan, mark=triangle*, mark size=3, line width=0.02cm] table[x=x, y=p3] \HoneErrors;
    \addplot[magenta, mark=pentagon*, mark size=3, line width=0.02cm] table[x=x, y=p4] \HoneErrors;
    \addplot[blue, mark=square*, mark size=3, line width=0.02cm] table[x=x, y=p5] \HoneErrors;
    \addplot[darkgreen, mark=diamond*,  mark size=3,line width=0.02cm] table[x=x, y=p6] \HoneErrors;
    \logLogSlopeTriangleDual{0.35}{0.25}{0.675}{2}{orange}{cyan};
    \logLogSlopeTriangleDual{0.35}{0.25}{0.335}{4}{magenta}{blue};
    \logLogSlopeTriangle{0.295}{0.135}{0.09}{6}{darkgreen};
\end{groupplot}
\end{tikzpicture}
\end{minipage}
\vspace{0.1cm}
\begin{minipage}{0.32\textwidth}
\begin{tikzpicture}
\begin{groupplot}[group style={group size=1 by 1},height=6cm,width=5.7cm, every axis label={font=\normalsize}, ylabel style={font=\footnotesize}]
        
    \nextgroupplot[xmode=log, 
                log x ticks with fixed point,
                xtick={0.05,0.1,0.2,0.4},
                ytick={0.1,0.001,0.00001,0.0000001,0.000000001,0.00000000001,0.0000000000001},
                title =~$L^2$-error,
                xlabel={$h$},
                ymode=log,
                legend pos=south west, 
                legend style={nodes={scale=1, transform shape}},]
    \pgfplotstableread{
        x       p2              p3              p4              p5              p6
   0.625    0.018967002467416   0.002798478782804   0.000071268202215   0.000006381587276   0.000000084702421
   0.3125    0.002356339462548   0.000398864475533   0.000002297416327   0.000000214237761   0.000000000690874
   0.15625    0.000273194941792   0.000053533664641   0.000000070845952   0.000000006874670   0.000000000005527
   0.078125    0.000032083302760   0.000006840211472   0.000000002231036   0.000000000203121   0.000000000000048
    } \HoneErrors
    \addplot[orange, mark=*, mark size=3, line width=0.02cm] table[x=x, y=p2] \HoneErrors;
    \addplot[cyan, mark=triangle*, mark size=3, line width=0.02cm] table[x=x, y=p3] \HoneErrors;
    \addplot[magenta, mark=pentagon*, mark size=3, line width=0.02cm] table[x=x, y=p4] \HoneErrors;
    \addplot[blue, mark=square*, mark size=3, line width=0.02cm] table[x=x, y=p5] \HoneErrors;
    \addplot[darkgreen, mark=diamond*,  mark size=3,line width=0.02cm] table[x=x, y=p6] \HoneErrors;
    \logLogSlopeTriangleDual{0.35}{0.25}{0.615}{3}{orange}{cyan};
    \logLogSlopeTriangleDual{0.33}{0.25}{0.285}{5}{magenta}{blue};
    \logLogSlopeTriangle{0.29}{0.135}{0.09}{7}{darkgreen};
\end{groupplot}
\end{tikzpicture}
\end{minipage}
\caption{Relative errors in the~$H^2$ norm (left plot),~$H^1$ norm (center plot) and~$L^2$ norm (right plot) with $C^1$-splines for~$p=2$ ({\large{\textcolor{orange}{$\bullet$}}} marker),~$p=3$ ({\footnotesize{\textcolor{cyan}{$\blacktriangle$}}} marker),~$p=4$ ({\footnotesize{\textcolor{magenta}{$\pentagofill$}}} marker),~$p=5$ ({\footnotesize{\textcolor{blue}{\(\blacksquare\)}}} marker) and~$p=6$ ({\footnotesize{\textcolor{darkgreen}{\(\blacklozenge\)}}} marker) solving problem~\eqref{eq:21} with data as in~\eqref{eq:38}.}
\label{fig:1}
\end{figure}
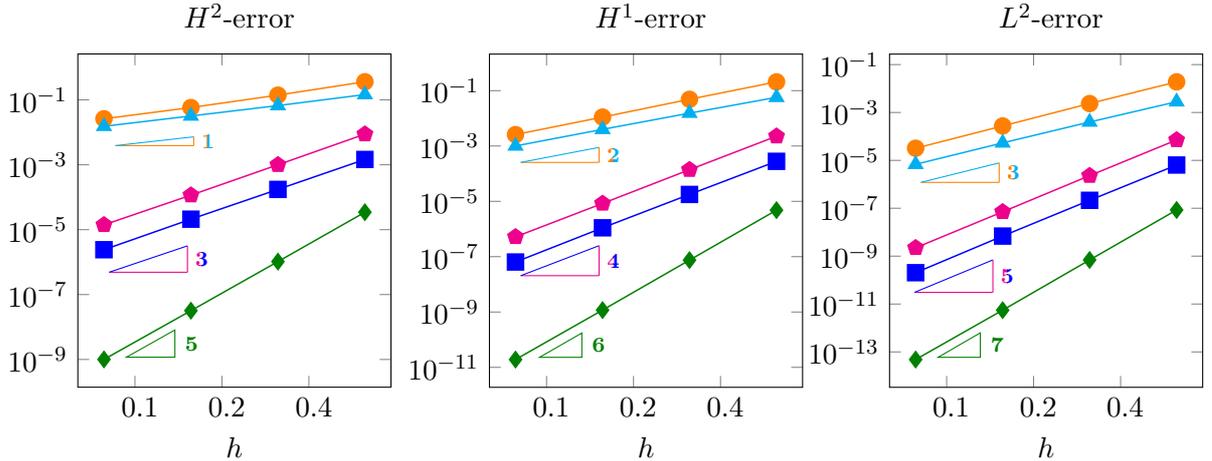

\noindent 
A different scenario arises when using splines of maximal regularity. In Figure~\ref{fig:2}, we report the errors
for various degrees $p$ (and thus regularities $p-1$) for the same test problem as in~\eqref{eq:38}. In this case, the obtained convergence rates are quasi-optimal both for even and odd degrees.

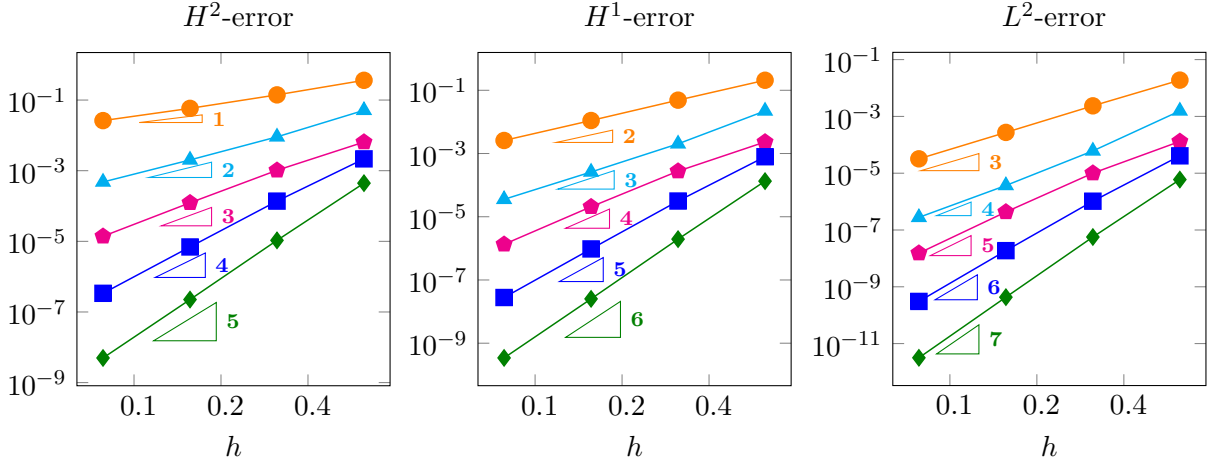
\begin{figure}[h!]
\centering
\begin{minipage}{0.3315\textwidth}
\begin{tikzpicture}
\begin{groupplot}[group style={group size=1 by 1},height=6cm,width=5.7cm, every axis label={font=\normalsize}, ylabel style={font=\footnotesize}]
        
    \nextgroupplot[xmode=log, 
                log x ticks with fixed point,
                xtick={0.05,0.1,0.2,0.4},
                title =~$H^2$-error,
                ytick={0.1,0.001,0.00001,0.0000001,0.000000001},
                xlabel={$h$},
                ymode=log,
                legend pos=south west, 
                legend style={nodes={scale=1, transform shape}},]
    \pgfplotstableread{
        x       p2              p3              p4              p5              p6
        0.625      0.362322893791104   0.051155155352790   0.006459485006700   0.002162254244325   0.000443169147154
        0.3125     0.140130122395653   0.009074258191251   0.001026845350395   0.000137601051777   0.000010647290719
        0.15625    0.058184656595298   0.002015589516520   0.000123383678505   0.000006934226995   0.000000224587909
        0.078125     0.026183443226848   0.000478589249619   0.000013889442878   0.000000337468625   0.000000004983848
    } \HtwoErrors
    \addplot[orange, mark=*,mark size=3, line width=0.02cm] table[x=x, y=p2] \HtwoErrors;
    \addplot[cyan, mark=triangle*,mark size=3, line width=0.02cm] table[x=x, y=p3] \HtwoErrors;
    \addplot[magenta, mark=pentagon*,mark size=3, line width=0.02cm] table[x=x, y=p4] \HtwoErrors;
    \addplot[blue, mark=square*, mark size=3,line width=0.02cm] table[x=x, y=p5] \HtwoErrors;
    \addplot[darkgreen, mark=diamond*,mark size=3, line width=0.02cm] table[x=x, y=p6] \HtwoErrors;
    \logLogSlopeTriangle{0.4}{0.2}{0.79}{1}{orange};
    \logLogSlopeTriangle{0.43}{0.2}{0.625}{2}{cyan};    
    \logLogSlopeTriangle{0.43}{0.16}{0.48}{3}{magenta};
    \logLogSlopeTriangle{0.41}{0.16}{0.325}{4}{blue};
    \logLogSlopeTriangle{0.445}{0.2}{0.135}{5}{darkgreen};
\end{groupplot}
\end{tikzpicture}
\end{minipage}
\begin{minipage}{0.3335\textwidth}
\begin{tikzpicture}
\begin{groupplot}[group style={group size=1 by 1},height=6cm,width=5.7cm, every axis label={font=\normalsize}, ylabel style={font=\footnotesize}]
        
    \nextgroupplot[xmode=log, 
                log x ticks with fixed point,
                xtick={0.05,0.1,0.2,0.4},
                ytick={0.1,0.001,0.00001,0.0000001,0.000000001,0.00000000001},
                title =~$H^1$-error,
                xlabel={$h$},
                ymode=log,
                legend pos=south west, 
                legend style={nodes={scale=1, transform shape}},]
    \pgfplotstableread{
        x       p2              p3              p4              p5              p6
        0.625      0.207666245121251   0.022169499590134   0.002318324051685   0.000790905989190   0.000133736480337
        0.3125     0.048727875280743   0.002024228765156   0.000276969527331   0.000031404754998   0.000001956674949
        0.15625    0.011062792695288   0.000260313618456   0.000020659659408   0.000000959632278   0.000000025259204
        0.078125     0.002584030654227   0.000035503081048   0.000001344452117   0.000000027890204   0.000000000342531
    } \HoneErrors
    \addplot[orange, mark=*, mark size=3, line width=0.02cm] table[x=x, y=p2] \HoneErrors;
    \addplot[cyan, mark=triangle*, mark size=3, line width=0.02cm] table[x=x, y=p3] \HoneErrors;
    \addplot[magenta, mark=pentagon*, mark size=3, line width=0.02cm] table[x=x, y=p4] \HoneErrors;
    \addplot[blue, mark=square*, mark size=3, line width=0.02cm] table[x=x, y=p5] \HoneErrors;
    \addplot[darkgreen, mark=diamond*,  mark size=3,line width=0.02cm] table[x=x, y=p6] \HoneErrors;
    \logLogSlopeTriangle{0.43}{0.18}{0.73}{2}{orange};    
    \logLogSlopeTriangle{0.435}{0.18}{0.59}{3}{cyan};    
    \logLogSlopeTriangle{0.42}{0.14}{0.4725}{4}{magenta};
    \logLogSlopeTriangle{0.4}{0.14}{0.3125}{5}{blue};
    \logLogSlopeTriangle{0.455}{0.175}{0.145}{6}{darkgreen};
\end{groupplot}
\end{tikzpicture}
\end{minipage}
\vspace{0.1cm}
\begin{minipage}{0.32\textwidth}
\begin{tikzpicture}
\begin{groupplot}[group style={group size=1 by 1},height=6cm,width=5.7cm, every axis label={font=\normalsize}, ylabel style={font=\footnotesize}]
        
    \nextgroupplot[xmode=log, 
                log x ticks with fixed point,
                xtick={0.05,0.1,0.2,0.4},
                ytick={0.1,0.001,0.00001,0.0000001,0.000000001,0.00000000001,0.0000000000001},
                title =~$L^2$-error,
                xlabel={$h$},
                ymode=log,
                legend pos=south west, 
                legend style={nodes={scale=1, transform shape}},]
    \pgfplotstableread{
        x       p2              p3              p4              p5              p6
   0.625    0.018967002467416   0.001559435568554   0.000131490817258   0.000040898062055   0.000005962854515
   0.3125    0.002356339462548   0.000060863347012   0.000010045746156   0.000001033637063   0.000000056641102
   0.15625    0.000273194941792   0.000003695048605   0.000000432044574   0.000000018748336   0.000000000431368
   0.078125    0.000032083302760   0.000000278092041   0.000000015304050   0.000000000305623   0.000000000003165
    } \HoneErrors
    \addplot[orange, mark=*, mark size=3, line width=0.02cm] table[x=x, y=p2] \HoneErrors;
    \addplot[cyan, mark=triangle*, mark size=3, line width=0.02cm] table[x=x, y=p3] \HoneErrors;
    \addplot[magenta, mark=pentagon*, mark size=3, line width=0.02cm] table[x=x, y=p4] \HoneErrors;
    \addplot[blue, mark=square*, mark size=3, line width=0.02cm] table[x=x, y=p5] \HoneErrors;
    \addplot[darkgreen, mark=diamond*,  mark size=3,line width=0.02cm] table[x=x, y=p6] \HoneErrors;
    \logLogSlopeTriangle{0.275}{0.185}{0.645}{3}{orange};
    \logLogSlopeTriangle{0.25}{0.11}{0.51}{4}{cyan};
    \logLogSlopeTriangle{0.25}{0.13}{0.39}{5}{magenta};
    \logLogSlopeTriangle{0.27}{0.135}{0.2575}{6}{blue};
    \logLogSlopeTriangle{0.275}{0.135}{0.095}{7}{darkgreen};
\end{groupplot}
\end{tikzpicture}
\end{minipage}
\caption{Relative errors in the~$H^2$ norm (left plot),~$H^1$ norm (center plot) and~$L^2$ norm (right plot) with maximal regularity splines for~$p=2$ ({\large{\textcolor{orange}{$\bullet$}}} marker),~$p=3$ ({\footnotesize{\textcolor{cyan}{$\blacktriangle$}}} marker),~$p=4$ ({\footnotesize{\textcolor{magenta}{$\pentagofill$}}} marker),~$p=5$ ({\footnotesize{\textcolor{blue}{\(\blacksquare\)}}} marker) and~$p=6$ ({\footnotesize{\textcolor{darkgreen}{\(\blacklozenge\)}}} marker) solving problem~\eqref{eq:21} with data as 
in~\eqref{eq:38}.}
\label{fig:2}
\end{figure}

\noindent
Based on numerical findings, our conjecture is that, for splines on uniform meshes, quasi-optimal convergence occurs whenever the difference between polynomial degree and regularity is odd.
\begin{remark}
On a mesh with $N$ elements, the dimension of~$S^{q,q-1}(0,T)$ is~$N+q$. Therefore, for maximal regularity splines, even if the convergence rate were suboptimal by one order, achieving a convergence rate of~$p$ for the~$H^1$ error by switching from~$S^{p,p-1}(0,T)$ to $S^{p+1,p}(0,T)$ would increase the number of degrees of freedom by just one.
\end{remark}
\noindent
In this section, we prove a quasi-optimal convergence result, under an additional assumption on the discrete spaces~$S_h^p(0,T) \subset H_{0,\bullet}^2(0,T)$. Then, in Section~\ref{sec:33}, we prove that, for $C^1$-continuous splines of even degree on uniform meshes, this assumption is actually satisfied.
\begin{assumption} \label{ass:38}
There exists a projection operator~$Q_h : H^2_{0,\bullet}(0,T) \to S_h^p(0,T)$ such that
\begin{equation*}
\begin{cases}
    \partial_t Q_h u(T) = \partial_t u(T) \\
    (\partial_t Q_h u, e^{-\bullet/T} \partial_t^2 w_h )_{L^2(0,T)} = (\partial_t u, e^{-\bullet/T} \partial_t^2 w_h )_{L^2(0,T)}
    \quad\quad \text{for all~} w_h \in S_h^p(0,T),
\end{cases}
\end{equation*}
and such that, for every $u$ sufficiently smooth,
\begin{equation} \label{eq:39}
    \| \partial_t u - \partial_t Q_h u \|_{L^2(0,T)} \le C h^p,
\end{equation}
with a constant~$C>0$ depending on $u$ but independent of~$h$ .
\end{assumption}
\noindent
Consider also the auxiliary projection operator $\Pi_h^{\partial_t^2} : H_{0,\bullet}^2(0,T) \to S_h^p(0,T)$ defined by
\begin{equation} \label{eq:40}
    (\partial_t^2 \Pi_h^{\partial_t^2} u, \partial_t \L_T w_h )_{L^2(0,T)} + \partial_t \Pi_h^{\partial_t^2} u(0) \partial_t w_h(0) = (\partial_t^2 u, \partial_t \L_T w_h )_{L^2(0,T)} + \partial_t u(0) \partial_t w_h(0)
\end{equation}
for all $w_h \in S_h^p(0,T)$. Note that~\eqref{eq:40} can also be written as~$a_0(\Pi_h^{\partial_t^2} u, \L_T w_h)=a_0(u, \L_T w_h)$, with $a_0$ as in \eqref{eq:5} with $\mu=0$.
\begin{proposition} \label{prop:310n}
The projection operator $\Pi_h^{\partial_t^2}$ defined in \eqref{eq:40} is well-defined and satisfies
\begin{align}
\label{eq:41}
    \| \partial_t u - \partial_t \Pi_h^{\partial_t^2} u\|_{L^2(0,T)} & \le (1+2e) \| \partial_t u - \partial_t Q_h u\|_{L^2(0,T)}, 
\end{align}
with $Q_h$ defined in Assumption~\ref{ass:38}.
\end{proposition}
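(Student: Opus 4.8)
The plan is to treat $\Pi_h^{\partial_t^2}$ as the Galerkin projection associated with the coercive bilinear form $a_0(\cdot,\L_T\cdot)$ and to run a Céa-type argument—but one tailored to exploit the two defining properties of $Q_h$, so as to avoid the loss of one derivative that a naive application of the continuity estimate~\eqref{eq:17} (which only controls $a_0$ by a \emph{second}-order norm) would incur.

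First I would settle well-definedness. The defining relation~\eqref{eq:40} is a square linear system on $S_h^p(0,T)$, so it suffices to check that $a_0(\cdot,\L_T\cdot)$ is positive definite there. This is immediate from the coercivity estimate~\eqref{eq:15}, whose proof applies verbatim for $\mu=0$, once one notes that $\|\partial_t\cdot\|_{L^2(0,T)}$ is a genuine norm on $S_h^p(0,T)\subset H^2_{0,\bullet}(0,T)$: indeed $\partial_t v_h=0$ together with $v_h(0)=0$ forces $v_h=0$. Hence $\Pi_h^{\partial_t^2}u$ exists and is unique, and by construction $a_0(u-\Pi_h^{\partial_t^2}u,\L_T w_h)=0$ for all $w_h\in S_h^p(0,T)$.

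Next, set $v_h := \Pi_h^{\partial_t^2}u - Q_h u\in S_h^p(0,T)$ and $g := u - Q_h u$. Applying coercivity to $v_h$ and then Galerkin orthogonality (which gives $a_0(v_h,\L_T v_h)=a_0(g,\L_T v_h)$) reduces the task to bounding $a_0(g,\L_T v_h)$ by $\|\partial_t g\|_{L^2(0,T)}\,\|\partial_t v_h\|_{L^2(0,T)}$, \emph{without} any second-order norm of $g$. The hard part—and the whole point of Assumption~\ref{ass:38}—is carried out here by integrating by parts the term $(\partial_t^2 g,\partial_t\L_T v_h)_{L^2(0,T)} = (\partial_t^2 g, e^{-\bullet/T}\partial_t v_h)_{L^2(0,T)}$. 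The boundary contribution at $t=T$ vanishes because $\partial_t g(T)=0$ (first condition of Assumption~\ref{ass:38}), while the boundary contribution at $t=0$ cancels \emph{exactly} against the point-evaluation term $\partial_t g(0)\partial_t v_h(0)$ present in $a_0(g,\L_T v_h)$. What remains is
\begin{equation*}
a_0(g,\L_T v_h) = \frac{1}{T}(\partial_t g, e^{-\bullet/T}\partial_t v_h)_{L^2(0,T)} - (\partial_t g, e^{-\bullet/T}\partial_t^2 v_h)_{L^2(0,T)},
\end{equation*}
and the second, second-derivative term vanishes by the $L^2$-orthogonality condition in Assumption~\ref{ass:38} applied with $w_h=v_h$.

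Finally I would bound the surviving term by Cauchy–Schwarz together with $e^{-t/T}\le 1$ on $[0,T]$, giving $a_0(g,\L_T v_h)\le T^{-1}\|\partial_t g\|_{L^2(0,T)}\|\partial_t v_h\|_{L^2(0,T)}$. Combining with the coercivity lower bound $\tfrac{1}{2eT}\|\partial_t v_h\|_{L^2(0,T)}^2\le a_0(g,\L_T v_h)$ and dividing by $\|\partial_t v_h\|_{L^2(0,T)}$ yields $\|\partial_t v_h\|_{L^2(0,T)}\le 2e\,\|\partial_t g\|_{L^2(0,T)}$, whence~\eqref{eq:41} follows from the triangle inequality $\|\partial_t u-\partial_t\Pi_h^{\partial_t^2}u\|_{L^2(0,T)}\le \|\partial_t g\|_{L^2(0,T)}+\|\partial_t v_h\|_{L^2(0,T)}$. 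The only genuinely delicate point is the integration-by-parts bookkeeping: the two conditions defining $Q_h$ are engineered precisely so that the endpoint term at $T$ and the entire second-derivative term drop out, leaving a bound driven only by $\|\partial_t g\|_{L^2(0,T)}$, with an $h$-independent and $\mu$-free constant.
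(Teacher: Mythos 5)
Your proposal is correct and follows essentially the same route as the paper: well-posedness via the $\mu=0$ coercivity of $a_0(\cdot,\L_T\cdot)$ on the finite-dimensional space, then coercivity plus Galerkin orthogonality applied to $\Pi_h^{\partial_t^2}u-Q_hu$, with the integration by parts in which the endpoint condition $\partial_t(u-Q_hu)(T)=0$ kills the boundary term at $T$, the term at $0$ cancels the point-evaluation term, and the orthogonality condition of Assumption~\ref{ass:38} removes the second-derivative term, yielding the constant $2e$ and then $1+2e$ by the triangle inequality. This is exactly the paper's argument, merely with the integration-by-parts bookkeeping spelled out more explicitly.
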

\begin{proof}
With \textit{4.} and \textit{6.} of Proposition \ref{prop:24}, we deduce the coercivity property
\begin{equation} \label{eq:42}
    (\partial_t^2 w, \partial_t \L_T w)_{L^2(0,T)} + |\partial_t w(0)|^2 \ge \frac{1}{2eT} \|\partial_t w\|^2_{L^2(0,T)} \quad \text{for all~} w \in H^2_{0,\bullet}(0,T).
\end{equation}
This implies that the square, finite dimensional system~\eqref{eq:40} is nonsingular, and therefore~$\Pi_h^{\partial_t^2}$ is well defined. To obtain estimate~\eqref{eq:41}, we first write, with the triangle inequality,
\begin{align} \label{eq:43}
    \| \partial_t u - \partial_t \Pi_h^{\partial_t^2} u \|_{L^2(0,T)} \le \| \partial_t u - \partial_t Q_h u \|_{L^2(0,T)} + \| \partial_t \Pi_h^{\partial_t^2} u - \partial_t Q_h u \|_{L^2(0,T)}.
\end{align}
Then, we use \eqref{eq:42}, the definition of $\Pi_h^{\partial_t^2}$ in \eqref{eq:40}, property~\emph{2.} of Proposition~\ref{prop:24} together with integration by parts, and the definition of $Q_h$ in Assumption \ref{ass:38},  to deduce
\begin{align*}
    \| \partial_t \Pi_h^{\partial_t^2}  u - \partial_t Q_h u\|^2_{L^2(0,T)} & \le 2eT \Bigl(\partial_t^2 ( \Pi_h^{\partial_t^2} u - Q_h u), \partial_t \L_T (\Pi_h^{\partial_t^2} u - Q_h u))_{L^2(0,T)} 
    \\ & \qquad\qquad + \partial_t (\Pi_h^{\partial_t^2} u - Q_h u)(0)\, \partial_t (\Pi_h^{\partial_t^2}u - Q_h u)(0) \Bigr)
    \\ & \le 2eT \Bigl(\partial_t^2 (u - Q_h u), \partial_t \L_T (\Pi_h^{\partial_t^2} u - Q_h u))_{L^2(0,T)} 
    \\ & \qquad\qquad + \partial_t (u - Q_h u)(0) \,\partial_t (\Pi_h^{\partial_t^2} u - Q_h u)(0) \Bigr)
    \\ & = 2e (\partial_t (u - Q_h u), \partial_t \L_T ( \Pi_h^{\partial_t^2} u - Q_h u))_{L^2(0,T)}.
\end{align*}
With Cauchy-Schwarz and \textit{8.} of Proposition \ref{prop:24} we conclude 
\begin{equation} \label{eq:44}
    \| \partial_t \Pi_h^{\partial_t^2} u - \partial_t Q_h u\|_{L^2(0,T)} \le 2e \| \partial_t u - \partial_t Q_h u \|_{L^2(0,T)}.
\end{equation}
Combining \eqref{eq:43} and \eqref{eq:44}, we deduce \eqref{eq:41}.
\end{proof}
\noindent
The following approximation results easily follows.
\begin{corollary} \label{cor:312}
Under Assumptions~\ref{ass:32} and~\ref{ass:38}, assuming $u$ sufficiently smooth, the projection operator $\Pi_h^{\partial_t^2}$ defined in \eqref{eq:40} satisfies
\begin{align}
\label{eq:45}
    \| \partial_t^\ell u - \partial_t^\ell \Pi_h^{\partial_t^2} u\|_{L^2(0,T)} & \le C h^{p+1-\ell},\qquad \ell=0,1,2,  
\end{align}
with a constant $C>0$ depending on~$u$ but independent of~$h$.
\end{corollary}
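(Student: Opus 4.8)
The plan is to prove the three estimates in \eqref{eq:45} separately, in the order $\ell=1$, then $\ell=2$, and finally $\ell=0$, since the last case relies on the first two. The case $\ell=1$ is immediate: combining the stability bound \eqref{eq:41} of Proposition~\ref{prop:310n} with the approximation property \eqref{eq:39} of $Q_h$ gives
\[
\| \partial_t u - \partial_t \Pi_h^{\partial_t^2} u\|_{L^2(0,T)} \le (1+2e)\| \partial_t u - \partial_t Q_h u\|_{L^2(0,T)} \le C h^p,
\]
which is exactly \eqref{eq:45} with $\ell=1$.

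For $\ell=2$, I would insert the quasi-interpolant $\widetilde{u}_h \in S_h^p(0,T)$ of Assumption~\ref{ass:32} by the triangle inequality and exploit that $\widetilde{u}_h - \Pi_h^{\partial_t^2} u \in S_h^p(0,T)$, so the inverse inequality \eqref{eq:23} applies. The term $\| \partial_t^2 u - \partial_t^2 \widetilde{u}_h\|_{L^2(0,T)}$ is $O(h^{p-1})$ by \eqref{eq:25} with $s=p$, $\ell=2$, while $\|\partial_t^2(\widetilde{u}_h - \Pi_h^{\partial_t^2} u)\|_{L^2(0,T)}$ is controlled by $C_I h^{-1}\|\partial_t(\widetilde{u}_h - \Pi_h^{\partial_t^2} u)\|_{L^2(0,T)}$, and this $\partial_t$-difference is $O(h^p)$ via the triangle inequality, \eqref{eq:25} ($\ell=1$), and the already-established $\ell=1$ bound. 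This yields the $O(h^{p-1})$ estimate; together with the $\ell=1$ bound it gives $\|u - \Pi_h^{\partial_t^2} u\|_{H^2_{0,\bullet}(0,T)} \le C h^{p-1}$, which will feed the duality argument.

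The case $\ell=0$ is the crux and requires an Aubin--Nitsche duality argument modeled on the proof of Corollary~\ref{cor:38}. Writing $e := u - \Pi_h^{\partial_t^2} u$, I would consider the adjoint problem of Remark~\ref{rem:37} (the $\mu=0$ case) with source $g(t) := e^{t/T}\int_t^T e(s)\,\dd s$, chosen so that, by property~\textit{2.} of Proposition~\ref{prop:24} and integration by parts using $e(0)=0$, one has $(g,\partial_t\L_T e)_{L^2(0,T)} = \|e\|_{L^2(0,T)}^2$. The defining relation \eqref{eq:40} is precisely the Galerkin orthogonality $a_0(e,\L_T w_h)=0$ for all $w_h\in S_h^p(0,T)$, so that, with $\psi$ the adjoint solution and $\widetilde{\psi}_h$ its quasi-interpolant,
\[
\|e\|_{L^2(0,T)}^2 = a_0(e,\L_T\psi) = a_0(e,\L_T(\psi-\widetilde{\psi}_h)).
\]
Applying the continuity estimate \eqref{eq:17} (with $\mu=0$) and the trace bound \eqref{eq:20} gives $\|e\|_{L^2(0,T)}^2 \le C\|e\|_{H^2_{0,\bullet}(0,T)}\big(\|\partial_t\psi-\partial_t\widetilde{\psi}_h\|_{L^2(0,T)}+|\partial_t\psi(0)-\partial_t\widetilde{\psi}_h(0)|\big)$. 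Using the adjoint regularity $\|\partial_t^3\psi\|_{L^2(0,T)},\,\|\partial_t^3\psi\|_{L^\infty(0,T)}\le C\|e\|_{L^2(0,T)}$ from Remark~\ref{rem:37} (after computing $\partial_t g = T^{-1}g - e^{t/T}e$) and the approximation estimates \eqref{eq:25} and \eqref{eq:26} at $s=2$, the parenthesis is $O(h^2\|e\|_{L^2(0,T)})$; combined with $\|e\|_{H^2_{0,\bullet}(0,T)}\le Ch^{p-1}$ this yields $\|e\|_{L^2(0,T)}^2\le Ch^{p+1}\|e\|_{L^2(0,T)}$, hence \eqref{eq:45} for $\ell=0$.

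The main obstacle is the $\ell=0$ estimate, which is the only one gaining a genuine extra order of accuracy. The delicate points are verifying the identity $(g,\partial_t\L_T e)_{L^2(0,T)}=\|e\|_{L^2(0,T)}^2$ for the tailored source and, above all, correctly invoking the $\mu=0$ adjoint regularity of Remark~\ref{rem:37}; the remainder reduces to combining Galerkin orthogonality, the continuity estimate \eqref{eq:17}, and the approximation/regularity bounds, exactly as in Corollary~\ref{cor:38}.
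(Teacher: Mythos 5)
Your proposal is correct and follows essentially the same route as the paper: the $\ell=1$ case from Proposition~\ref{prop:310n} and~\eqref{eq:39}, the $\ell=2$ case by the inverse-inequality argument of Corollary~\ref{cor:35}, and the $\ell=0$ case by the Aubin--Nitsche duality argument of Corollary~\ref{cor:38} with $u-\Pi_h^{\partial_t^2}u$ in place of $u-u_h$ and $a_0$ in place of $a_\mu$, invoking Remark~\ref{rem:37}. The paper states this only as a terse reference to those earlier proofs; your write-up fills in the same steps explicitly and the details (the identity $(g,\partial_t\L_T e)_{L^2(0,T)}=\|e\|_{L^2(0,T)}^2$ and the adjoint regularity) check out.
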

\begin{proof}
For~$\ell=1$, estimate~\eqref{eq:45} immediately follows from Proposition~\ref{prop:310n} and estimate~\eqref{eq:39}. For~$\ell=2$, the results is obtained by proceeding as in Corollary~\ref{cor:35} and using~\eqref{eq:45} for~$\ell=1$. Finally, the result for~$\ell=0$ is obtained as in Corollary~\ref{cor:38}, with~$u-\Pi_h^{\partial_t^2} u$ instead of~$u-u_h$, and~$a_0(\cdot,\cdot)$ instead of~$a_\mu(\cdot,\cdot)$, owing to Remark~\ref{rem:37}, and using~\eqref{eq:45} for~$\ell=2$.
\end{proof}
\noindent
We also prove the following auxiliary result.
\begin{lemma} \label{lem:313}
Let~$u \in H^2_{0,\bullet}(0,T)$ be the unique solution of problem~\eqref{eq:3}, and let~$u_h \in S_h^p(0,T)$ be the unique solution of problem~\eqref{eq:21}. Then, it holds
\begin{equation*}
    \| \partial_t u_h - \partial_t \Pi_h^{\partial_t^2} u  \|_{L^2(0,T)} \le 2 e T \mu \| u-\Pi_h^{\partial_t^2} u  \|_{L^2(0,T)},
\end{equation*}
with the projector $\Pi_h^{\partial_t^2}$ defined in \eqref{eq:40}.
\end{lemma}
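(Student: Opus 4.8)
The plan is to combine the $\mu$-independent coercivity of $a_\mu(\cdot,\L_T\cdot)$ from~\eqref{eq:15} with Galerkin orthogonality, and then to exploit the fact that the projector $\Pi_h^{\partial_t^2}$ is engineered precisely to annihilate the $a_0$-part of the error. I would begin by setting $w_h := u_h - \Pi_h^{\partial_t^2} u \in S_h^p(0,T)$ and applying coercivity~\eqref{eq:15}, which gives
$$\tfrac{1}{2eT}\,\|\partial_t w_h\|^2_{L^2(0,T)} \le a_\mu(w_h,\L_T w_h).$$

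The next step is to rewrite the right-hand side. Since $u$ solves~\eqref{eq:14} and $u_h$ solves~\eqref{eq:21} with the same data, consistency yields the orthogonality $a_\mu(u-u_h,\L_T w_h)=0$ for all $w_h\in S_h^p(0,T)$, whence $a_\mu(u_h,\L_T w_h)=a_\mu(u,\L_T w_h)$. Subtracting the $\Pi_h^{\partial_t^2}u$ term then gives $a_\mu(w_h,\L_T w_h)=a_\mu(u-\Pi_h^{\partial_t^2}u,\L_T w_h)$. Now comes the key manipulation: I would split $a_\mu = a_0 + \mu(\cdot,\partial_t\L_T\cdot)_{L^2(0,T)}$ and invoke the defining identity~\eqref{eq:40}, namely $a_0(u-\Pi_h^{\partial_t^2}u,\L_T w_h)=0$ for all $w_h\in S_h^p(0,T)$. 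This eliminates the second-derivative and pointwise contributions and leaves only the mass term,
$$a_\mu(w_h,\L_T w_h) = \mu\,(u-\Pi_h^{\partial_t^2}u,\ \partial_t\L_T w_h)_{L^2(0,T)}.$$

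To finish, I would bound the right-hand side by Cauchy--Schwarz and then use the upper bound in property~\emph{8.} of Proposition~\ref{prop:24}, $\|\partial_t\L_T w_h\|_{L^2(0,T)}\le\|\partial_t w_h\|_{L^2(0,T)}$, to obtain
$$\tfrac{1}{2eT}\,\|\partial_t w_h\|^2_{L^2(0,T)} \le \mu\,\|u-\Pi_h^{\partial_t^2}u\|_{L^2(0,T)}\,\|\partial_t w_h\|_{L^2(0,T)},$$
after which dividing through by $\|\partial_t w_h\|_{L^2(0,T)}$ (the case $\partial_t w_h\equiv 0$ being trivial) delivers the stated estimate.

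The conceptual crux, and essentially the only nontrivial point, is recognizing that $\Pi_h^{\partial_t^2}$ is constructed exactly so that the error $u-\Pi_h^{\partial_t^2}u$ is orthogonal to the discrete space with respect to $a_0(\cdot,\L_T\cdot)$. This orthogonality collapses $a_\mu$ down to its mass term alone, which simultaneously explains the appearance of the factor $\mu$ and the fact that the right-hand side measures only the $L^2$-size of $u-\Pi_h^{\partial_t^2}u$ rather than its $H^1$-size. Once this is exploited, no genuine obstacle remains: the concluding steps are routine applications of Cauchy--Schwarz and the already-established norm equivalence for $\L_T$.
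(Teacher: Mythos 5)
Your proposal is correct and follows essentially the same path as the paper's proof: coercivity~\eqref{eq:15} applied to $u_h-\Pi_h^{\partial_t^2}u$, Galerkin orthogonality to replace $u_h$ by $u$, the defining $a_0$-orthogonality of $\Pi_h^{\partial_t^2}$ in~\eqref{eq:40} to reduce $a_\mu$ to the mass term, and then Cauchy--Schwarz together with property~\emph{8.}\ of Proposition~\ref{prop:24}. Nothing is missing.
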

\begin{proof}
From the coercivity in \eqref{eq:15}, consistency, and the definition of $\Pi_h^{\partial_t^2}$, we deduce
\begin{align*}
    \| \partial_t u_h - \partial_t \Pi_h^{\partial_t^2} u \|_{L^2(0,T)}^2 & \le 2eT \,a_\mu( u_h  - \Pi_h^{\partial_t^2} u , \L_T(u_h - \Pi_h^{\partial_t^2} u ))
     \\ & = 2eT\, a_\mu( u -\Pi_h^{\partial_t^2} u , \L_T(u_h - \Pi_h^{\partial_t^2} u))
    \\ & = 2eT\mu (u-\Pi_h^{\partial_t^2} u , \partial_t \L_T (u_h-\Pi_h^{\partial_t^2} u ))_{L^2(0,T)}
    \\ & \le 2eT\mu \| u - \Pi_h^{\partial_t^2} u \|_{L^2(0,T)}  \| \partial_t u_h - \partial_t  \Pi_h^{\partial_t^2} u \|_{L^2(0,T)},
\end{align*}
where in the last step we used the Cauchy-Schwarz inequality and~\textit{8.} of Proposition~\ref{prop:24}. The result immediately follows.
\end{proof}

\noindent
In the next theorem, we prove that Assumption \ref{ass:38} is sufficient to guarantee a quasi-optimal convergence result.
\begin{theorem} \label{th:314}
Let the discrete spaces~$S_h^p(0,T) \subset H^2_{0,\bullet}(0,T)$ satisfy Assumptions~\ref{ass:32} and~\ref{ass:38}. Let~$u \in H^2_{0,\bullet}(0,T)$ be the unique solution of problem~\eqref{eq:3}, and let~$u_h \in S_h^p(0,T)$ be the unique solution of problem~\eqref{eq:21}. Then, if~$u$ is sufficiently smooth, we have
\begin{align} \label{eq:46}
    \| \partial_t^\ell u - \partial_t^\ell u_h \|_{L^2(0,T)} &\le C_\mu h^{p+1-\ell}, \quad \ell=0,1,2
\end{align}
for a positive constant~$C_\mu$ depending on~$u, p, T, \mu$ but not on~$h$.
\end{theorem}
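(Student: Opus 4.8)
The plan is to interpolate between $u$ and $u_h$ through the auxiliary projection $\Pi_h^{\partial_t^2}u$, exploiting that the two ingredients needed are already in place: Corollary~\ref{cor:312} controls the projection error $u-\Pi_h^{\partial_t^2}u$ in all three norms, and Lemma~\ref{lem:313} controls the purely discrete quantity $\partial_t(u_h-\Pi_h^{\partial_t^2}u)$ in $L^2$. I would therefore write, for each $\ell$, the triangle inequality $\|\partial_t^\ell u - \partial_t^\ell u_h\|_{L^2(0,T)} \le \|\partial_t^\ell u - \partial_t^\ell \Pi_h^{\partial_t^2}u\|_{L^2(0,T)} + \|\partial_t^\ell \Pi_h^{\partial_t^2}u - \partial_t^\ell u_h\|_{L^2(0,T)}$ and estimate each summand separately.

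First I would treat the case $\ell=1$, which is immediate: the first summand is $O(h^p)$ by \eqref{eq:45} with $\ell=1$, while the second is bounded through Lemma~\ref{lem:313} by $2eT\mu\,\|u-\Pi_h^{\partial_t^2}u\|_{L^2(0,T)}$, hence by $O(h^{p+1})$ using \eqref{eq:45} with $\ell=0$; the two contributions combine to $O(h^p)$. For $\ell=0$, I would observe that $\Pi_h^{\partial_t^2}u - u_h \in S_h^p(0,T) \subset H^2_{0,\bullet}(0,T)$ vanishes at $t=0$, so the Poincar\'e inequality $\|v\|_{L^2(0,T)} \le \tfrac{2T}{\pi}\|\partial_t v\|_{L^2(0,T)}$ transfers the $L^2$ bound on the derivative from Lemma~\ref{lem:313} to $\|\Pi_h^{\partial_t^2}u - u_h\|_{L^2(0,T)} \le \tfrac{2T}{\pi}\,2eT\mu\,\|u-\Pi_h^{\partial_t^2}u\|_{L^2(0,T)} = O(h^{p+1})$; together with $\|u-\Pi_h^{\partial_t^2}u\|_{L^2(0,T)} = O(h^{p+1})$ this yields $O(h^{p+1})$. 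For $\ell=2$, I would instead use the inverse inequality \eqref{eq:23}, which gives $\|\partial_t^2(\Pi_h^{\partial_t^2}u - u_h)\|_{L^2(0,T)} \le \|\Pi_h^{\partial_t^2}u - u_h\|_{H^2_{0,\bullet}(0,T)} \le C_I h^{-1}\|\partial_t(\Pi_h^{\partial_t^2}u - u_h)\|_{L^2(0,T)} = O(h^p)$ after Lemma~\ref{lem:313} and \eqref{eq:45}; combined with the projection term $\|\partial_t^2 u - \partial_t^2\Pi_h^{\partial_t^2}u\|_{L^2(0,T)} = O(h^{p-1})$ from \eqref{eq:45} with $\ell=2$, the total is $O(h^{p-1})$.

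There is no real obstacle here: the theorem is a synthesis of the preceding results, and the quasi-optimality is entirely inherited from the superior approximation rate \eqref{eq:45} of $\Pi_h^{\partial_t^2}$ relative to the generic estimate of Theorem~\ref{theo:34}. The only points requiring care are bookkeeping ones, namely choosing the right tool to convert the $L^2$ bound on the \emph{derivative} of the discrete difference (Lemma~\ref{lem:313}) into the norm appropriate to each $\ell$: Poincar\'e for $\ell=0$ and the inverse inequality \eqref{eq:23} for $\ell=2$. In both of these cases the discrete contribution turns out to be of higher order than the projection contribution, so it is the term $u-\Pi_h^{\partial_t^2}u$ that dictates the final rate $h^{p+1-\ell}$, exactly as claimed in \eqref{eq:46}.
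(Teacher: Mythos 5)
Your proposal is correct and follows essentially the same route as the paper: for $\ell=0,1$ it is the identical argument (triangle inequality through $\Pi_h^{\partial_t^2}u$, Lemma~\ref{lem:313}, Corollary~\ref{cor:312}, with the Poincar\'e inequality for $\ell=0$). For $\ell=2$ the paper instead repeats the argument of Corollary~\ref{cor:35} (intermediate function $\widetilde{u}_h$ plus the already-established $\ell=1$ bound), whereas you apply the inverse inequality~\eqref{eq:23} directly to $\Pi_h^{\partial_t^2}u-u_h$ and invoke Lemma~\ref{lem:313} once more; this is only a bookkeeping variation and yields the same rate $h^{p-1}$.
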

\begin{proof}
With the triangle inequality, Lemma~\ref{lem:313}, and Corollary \ref{cor:312}, we obtain
\begin{align*}
     \| \partial_t u - \partial_t u_h \|_{L^2(0,T)} & \le \| \partial_t u - \partial_t \Pi_h^{\partial_t^2} u \|_{L^2(0,T)} + \| \partial_t u_h - \partial_t \Pi_h^{\partial_t^2} u \|_{L^2(0,T)}
     \\ & \le \| \partial_t u - \partial_t \Pi_h^{\partial_t^2} u \|_{L^2(0,T)} + 2 e T \mu\| u - \Pi_h^{\partial_t^2} u \|_{L^2(0,T)}
     \\ & \le C h^p,
\end{align*}
which proves \eqref{eq:46} for $\ell=1$. For~$\ell=2$, estimate~\eqref{eq:46} is obtained exactly as in Corollary~\ref{cor:35}. Finally, for $\ell=0$, we proceed similarly as for $\ell=1$, without resorting to a duality argument.
With the triangle inequality, Lemma~\ref{lem:313}, the Poincar\'e inequality, and Corollary \ref{cor:312}, we obtain
\begin{align*}
     \| u - u_h \|_{L^2(0,T)} & \le \| u - \Pi_h^{\partial_t^2} u \|_{L^2(0,T)} + \| u_h - \Pi_h^{\partial_t^2} u \|_{L^2(0,T)}
     \\ & \le \| u - \Pi_h^{\partial_t^2} u \|_{L^2(0,T)} + \frac{2T}{\pi} \|\partial_t u_h - \partial_t \Pi_h^{\partial_t^2} u \|_{L^2(0,T)}
     \\ & \le \| u - \Pi_h^{\partial_t^2} u \|_{L^2(0,T)} + \frac{4eT^2\mu}{\pi} \|u -  \Pi_h^{\partial_t^2} u \|_{L^2(0,T)}
     \\ & \le C h^{p+1},
\end{align*}
and the proof is complete.
\end{proof}

\noindent
 In the next section, we prove that the spaces of $C^1$ splines of even degree on uniform meshes satisfy Assumption~\ref{ass:38}, thereby ensuring the quasi-optimal convergence of Theorem~\ref{th:314}. In Figure \ref{fig:3}, we provide numerical evidence suggesting that this may also applies to other situations. Specifically, for~$u(t) = \sin^2 t \, e^{-t} \, t^2$ and~$T = 5$, we compute the projection as described in Assumption~\ref{ass:38} for spline spaces with varying regularities. By varying the polynomial degree $p$, we report the $H^1$ error for regularities $p-1$, $p-2$, and $p-3$. As anticipated, these results suggest that Assumption \ref{ass:38} may be satisfied whenever the difference between the polynomial degree and the spline regularity is odd.

\begin{figure}[h!]
\centering
\begin{minipage}{0.3315\textwidth}
\begin{tikzpicture}
\begin{groupplot}[group style={group size=1 by 1},height=6cm,width=5.7cm, every axis label={font=\normalsize}, ylabel style={font=\footnotesize}]
        
    \nextgroupplot[xmode=log, 
                log x ticks with fixed point,
                xtick={0.05,0.1,0.2,0.4},
                title =Regularity $p-1$,
                ytick={0.1,0.001,0.00001,0.0000001,0.000000001},
                xlabel={$h$},
                ymode=log,
                legend pos=south west, 
                legend style={nodes={scale=1, transform shape}},]
    \pgfplotstableread{
        x       p2              p3              p4              p5              p6
        0.625         0.140407820604326   0.050587324638522   0.018139034564928   0.006787629940795   0.002222651351157
        0.3125        0.025516923114729   0.004069227070861   0.000718918396725   0.000150476357443   0.000019319863086
        0.15625       0.005878586776659   0.000407660488438   0.000044699537646   0.000003740567281   0.000000230448013
        0.078125        0.001436226992858   0.000048977742412   0.000002827211537   0.000000109060158   0.000000003189698
    } \HtwoErrors
    \addplot[orange, mark=*,mark size=3, line width=0.02cm] table[x=x, y=p2] \HtwoErrors;
    \addplot[cyan, mark=triangle*,mark size=3, line width=0.02cm] table[x=x, y=p3] \HtwoErrors;
    \addplot[magenta, mark=pentagon*,mark size=3, line width=0.02cm] table[x=x, y=p4] \HtwoErrors;
    \addplot[blue, mark=square*, mark size=3,line width=0.02cm] table[x=x, y=p5] \HtwoErrors;
    \addplot[darkgreen, mark=diamond*,mark size=3, line width=0.02cm] table[x=x, y=p6] \HtwoErrors;
    \logLogSlopeTriangle{0.295}{0.2}{0.665}{2}{orange};
    \logLogSlopeTriangle{0.26}{0.15}{0.53}{3}{cyan};    
    \logLogSlopeTriangle{0.25}{0.13}{0.4}{4}{magenta};
    \logLogSlopeTriangle{0.265}{0.13}{0.265}{5}{blue};
    \logLogSlopeTriangle{0.265}{0.14}{0.1}{6}{darkgreen};
\end{groupplot}
\end{tikzpicture}
\end{minipage}
\begin{minipage}{0.3335\textwidth}
\begin{tikzpicture}
\begin{groupplot}[group style={group size=1 by 1},height=6cm,width=5.7cm, every axis label={font=\normalsize}, ylabel style={font=\footnotesize}]
        
    \nextgroupplot[xmode=log, 
                log x ticks with fixed point,
                xtick={0.05,0.1,0.2,0.4},
                ytick={0.1,0.001,0.00001,0.0000001,0.000000001,0.00000000001},
                title =Regularity $p-2$,
                xlabel={$h$},
                ymode=log,
                legend pos=south west, 
                legend style={nodes={scale=1, transform shape}},]
    \pgfplotstableread{
        x       p2              p3              p4              p5              
        0.625         0.017942138832768   0.003870721159774   0.000896524458631   0.000244107650837                   
        0.3125        0.004463348073920   0.000459720853585   0.000073109258797   0.000010062936657                   
        0.15625       0.001102265949720   0.000056947690478   0.000004995929216   0.000000314084019                   
        0.078125        0.000274429755745   0.000007128963129   0.000000318214077   0.000000009548096                 
    } \HoneErrors
    \addplot[cyan, mark=triangle*, mark size=3, line width=0.02cm] table[x=x, y=p2] \HoneErrors;
    \addplot[magenta, mark=pentagon*, mark size=3, line width=0.02cm] table[x=x, y=p3] \HoneErrors;
    \addplot[blue, mark=square*, mark size=3, line width=0.02cm] table[x=x, y=p4] \HoneErrors;
    \addplot[darkgreen, mark=diamond*,  mark size=3,line width=0.02cm] table[x=x, y=p5] \HoneErrors;
    \logLogSlopeTriangle{0.415}{0.18}{0.68}{2}{cyan};    
    \logLogSlopeTriangle{0.385}{0.14}{0.5}{3}{magenta};
    \logLogSlopeTriangle{0.37}{0.14}{0.325}{4}{blue};
    \logLogSlopeTriangle{0.395}{0.175}{0.145}{5}{darkgreen};
\end{groupplot}
\end{tikzpicture}
\end{minipage}
\vspace{0.1cm}
\begin{minipage}{0.32\textwidth}
\begin{tikzpicture}
\begin{groupplot}[group style={group size=1 by 1},height=6cm,width=5.7cm, every axis label={font=\normalsize}, ylabel style={font=\footnotesize}]
        
    \nextgroupplot[xmode=log, 
                log x ticks with fixed point,
                xtick={0.05,0.1,0.2,0.4},
                ytick={0.1,0.001,0.00001,0.0000001,0.000000001,0.00000000001,0.0000000000001},
                title =Regularity $p-3$,
                xlabel={$h$},
                ymode=log,
                legend pos=south west, 
                legend style={nodes={scale=1, transform shape}},]
    \pgfplotstableread{
        x       p2              p3              p4              
   0.625       0.003957202184872   0.000413295464066   0.000070263356953                  
   0.3125       0.000180296500914   0.000011570973221   0.000000685871943
   0.15625       0.000010505902403   0.000000345709309   0.000000009296036            
   0.078125       0.000000646100208   0.000000010668927   0.000000000145772
    } \HoneErrors
    \addplot[magenta, mark=pentagon*, mark size=3, line width=0.02cm] table[x=x, y=p2] \HoneErrors;
    \addplot[blue, mark=square*, mark size=3, line width=0.02cm] table[x=x, y=p3] \HoneErrors;
    \addplot[darkgreen, mark=diamond*,  mark size=3,line width=0.02cm] table[x=x, y=p4] \HoneErrors;
    \logLogSlopeTriangle{0.25}{0.13}{0.475}{4}{magenta};
    \logLogSlopeTriangle{0.27}{0.135}{0.275}{5}{blue};
    \logLogSlopeTriangle{0.275}{0.135}{0.095}{6}{darkgreen};
\end{groupplot}
\end{tikzpicture}
\end{minipage}
\caption{Relative errors in the~$H^1$ norm with splines of regularity $p-1$ (left plot), $p-2$ (center plot), and $p-3$ (right plot), for~$p=2$ ({\large{\textcolor{orange}{$\bullet$}}} marker),~$p=3$ ({\footnotesize{\textcolor{cyan}{$\blacktriangle$}}} marker),~$p=4$ ({\footnotesize{\textcolor{magenta}{$\pentagofill$}}} marker),~$p=5$ ({\footnotesize{\textcolor{blue}{\(\blacksquare\)}}} marker), and~$p=6$ ({\footnotesize{\textcolor{darkgreen}{\(\blacklozenge\)}}} marker), computing the projection defined in Assumption~\ref{ass:38} for~$u(t) = \sin^2 t \, e^{-t} t^2$ and~$T=5$.}
\label{fig:3}
\end{figure}
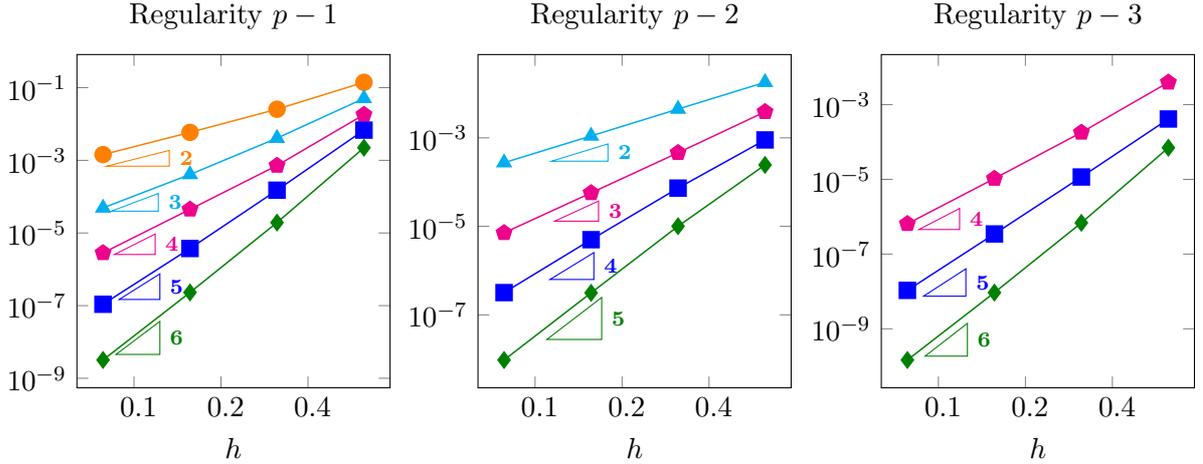

\subsection{Non-local projection into continuous piecewise polynomial spaces} \label{sec:33}

In this section, we prove that Assumption~\ref{ass:38} is satisfied for spaces of splines with $C^1$-regularity and even polynomial degree on uniform meshes. In order to do so, we first introduce and study a projection operator into spaces of continouous piecewise polynomial functions in  Propositions~\ref{prop:315} and~\ref{prop:316}, then we use it to characterize the operator~$Q_h$ in Assumption~\ref{ass:38} for $C^1$-splines of even degree~$p$, see Corollary~\ref{cor:317}.

\noindent
Consider the space $S_h^{p,0}(0,T)$ of continuous piecewise polynomials of degree $p$ defined on the uniform mesh $\{t_j = jh \mid j = 0,\ldots,N\}$ with mesh size $h = T/N$. Similarly, let $S_h^{p-1,-1}(0,T)$ be the space of discontinuous piecewise polynomials of degree $p-1$ over the same mesh. 
We set
\begin{equation} \label{eq:47}
    \widetilde{W}_\infty^{p+2}(0,T) := \{ v \in W_\infty^{p+2}(0,T) \mid \partial_t^{p+2} v(t) \text{~exists for all~} t \in (0,T)\}.
\end{equation}

\begin{proposition} \label{prop:315}
The projection operator~$\mathcal{P}_h^p : H^1(0,T) \to S_h^{p,0}(0,T)$ defined by
\begin{equation} \label{eq:48}
    \begin{cases}
         \mathcal{P}_h^p v (0) = v(0), &
        \\ (\mathcal{P}_h^p v, e^{-\bullet/T} q_h^{p-1})_{L^2(0,T)} = (v, e^{-\bullet/T} q_h^{p-1})_{L^2(0,T)} & \text{for all~} q_h^{p-1} \in S_h^{p-1,-1}(0,T),
    \end{cases}
\end{equation}
is well-defined. Moreover, if $v \in \widetilde{W}_\infty^{p+2}(0,T)$, where $\widetilde{W}_\infty^{p+2}(0,T)$ is defined in~\eqref{eq:47}, the following bound holds
\begin{equation} \label{eq:49}
    \max_{i=1,\ldots,N} | v(t_i) - \mathcal{P}_h^p v(t_i) | \le C h^{p+(p\hspace{-0.2cm}\mod{2})} \Bigl(\| \partial_t^{p+1} v \|_{L^\infty(0,T)} + \| \partial_t^{p+2} v \|_{L^\infty(0,T)}\Bigr),
\end{equation}
where the constant $C$ depends only on $p$ and $T$.
\end{proposition}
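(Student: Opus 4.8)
The plan is to exploit the local structure of $\mathcal{P}_h^p$: since the test space $S_h^{p-1,-1}(0,T)$ is fully discontinuous, testing the second relation in \eqref{eq:48} against functions supported on a single element shows that the weighted orthogonality holds element by element. Together with the $C^0$-continuity built into $S_h^{p,0}(0,T)$ and the initial condition $\mathcal{P}_h^p v(0)=v(0)$, this turns the definition into a one-step marching scheme: on each interval $[t_j,t_{j+1}]$ the degree-$p$ polynomial $\mathcal{P}_h^p v|_{[t_j,t_{j+1}]}$ is fixed by its (already known) left nodal value together with $p$ weighted-orthogonality conditions against polynomials of degree at most $p-1$. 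To prove well-definedness I would first settle the square local problem: factoring the weight on $[t_j,t_{j+1}]$ as $e^{-t/T}=e^{-t_j/T}e^{-(t-t_j)/T}$, the constant $e^{-t_j/T}$ drops out of the orthogonality, so in the scaled variable $\tau=t-t_j\in[0,h]$ the local projection is the \emph{same} for every element, governed by the orthogonal polynomial $\omega_p$ of degree $p$ for the positive weight $e^{-\tau/T}$ on $[0,h]$. Since this weight is positive, $\omega_p$ has all its roots in $(0,h)$, hence $\omega_p(0)\neq0$, which makes the $p+1$ local conditions independent. Globally, well-definedness also follows directly by an energy argument: for $v=0$ one tests \eqref{eq:48} with $q_h=\partial_t w_h\in S_h^{p-1,-1}(0,T)$, and an integration by parts using $w_h(0)=0$ yields $\tfrac{1}{2e}|w_h(T)|^2+\tfrac{1}{2T}\int_0^T|w_h|^2e^{-t/T}\,\dd t=0$, forcing $w_h\equiv0$.

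For the nodal estimate I would set up a recursion for the nodal error $\epsilon_j:=v(t_j)-\mathcal{P}_h^p v(t_j)$, with $\epsilon_0=0$. Comparing $\mathcal{P}_h^p v$ on $[t_j,t_{j+1}]$ with the local projection that uses the \emph{exact} left value $v(t_j)$, and noting that their difference solves a homogeneous local problem (hence is a multiple of $\omega_p$), one obtains $\epsilon_{j+1}=\rho\,\epsilon_j+\delta_j$, where the amplification factor $\rho=\omega_p(h)/\omega_p(0)$ is the \emph{same constant} on every element (by the weight-factoring above) and $\delta_j$ is the right-endpoint error of the local projection applied to $v$ on the $j$-th element. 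Two facts drive the result. First, a perturbation from the unweighted case (shifted Legendre polynomials, for which $\omega_p(h)/\omega_p(0)=(-1)^p$) gives $\rho=(-1)^p+O(h)$, so $|\rho|\le1+Ch$ and $|\rho|^{N}\le e^{CT}$, which is the stability of the marching. Second, since the local projection reproduces polynomials of degree $p$, a Taylor expansion yields $\delta_j=c_p\,h^{p+1}\,\partial_t^{p+1}v(t_j)+\eta_j$ with $c_p$ bounded independently of $j$ and $h$ and a remainder $|\eta_j|\le Ch^{p+2}\|\partial_t^{p+2}v\|_{L^\infty(0,T)}$; crucially the leading constant carries \emph{no} parity dependence.

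Solving $\epsilon_i=\sum_{j=0}^{i-1}\rho^{\,i-1-j}\delta_j$ then produces the parity split, and this is where the main obstacle lies. For even $p$ one has $\rho=1+O(h)$, the contributions accumulate constructively, and summing $N$ terms each of size $h^{p+1}$ gives the bound $C h^{p}\|\partial_t^{p+1}v\|_{L^\infty(0,T)}$, which is the claimed rate since $p\bmod2=0$. For odd $p$ one has $\rho=-1+O(h)$, and the naive bound would again be $h^p$; the gain to $h^{p+1}$ must come from the cancellation in the alternating sum $\sum_j(-1)^{\,i-1-j}\partial_t^{p+1}v(t_j)$. I would extract it by summation by parts, bounding consecutive differences of $\rho^{\,i-1-j}\partial_t^{p+1}v(t_j)$: each such difference is $O(h)$ — combining the smoothness of $\partial_t^{p+1}v$ (controlled by $\|\partial_t^{p+2}v\|_{L^\infty(0,T)}$) with the $O(h)$ deviation of $\rho$ from $-1$ — so that the whole alternating sum stays $O(1)$ uniformly in $h$, leaving $|\epsilon_i|\le Ch^{p+1}\bigl(\|\partial_t^{p+1}v\|_{L^\infty(0,T)}+\|\partial_t^{p+2}v\|_{L^\infty(0,T)}\bigr)$. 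The remainder terms $\eta_j$ contribute $O(h^{p+1})$ after summation in both cases. Taking the maximum over $i=1,\dots,N$ (the same estimate holds for every partial sum) yields \eqref{eq:49}. The delicate point throughout is tracking the two sources of the $O(h)$ perturbations — the exponential weight and the endpoint ratio $\rho$ — while keeping the alternating-sum cancellation intact for odd $p$; the hypothesis $v\in\widetilde{W}_\infty^{p+2}(0,T)$ is used precisely to guarantee the pointwise Taylor remainder and the bounded variation of $\partial_t^{p+1}v$ that make this argument rigorous.
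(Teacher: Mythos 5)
Your proposal is correct and follows essentially the same route as the paper's proof in Appendix B.1: localization into a one-step marching scheme, a nodal-error recursion with amplification factor equal (up to normalization) to $P_p^i(t_{i-1})=(-1)^p+\mathcal{O}(h)$, direct summation for even $p$, and cancellation in the alternating sum (your summation by parts is the paper's explicit pairwise telescoping into integrals of $\partial_t^{p+2}v$) for odd $p$. The one step you assert rather than prove --- that the weighted orthogonal polynomials are $\mathcal{O}(h)$-perturbations of shifted Legendre polynomials, so that $\rho=(-1)^p+\mathcal{O}(h)$ --- is precisely the content of the paper's Lemma~\ref{lem:A2} and Proposition~\ref{prop:A3}, established there by induction on the three-term recurrence.
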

\begin{proof}
See Appendix~\ref{app:B1}.
\end{proof}
\noindent
The projection error in the $L^2$ norm satisfies the same estimate as in~\eqref{eq:49}.
\begin{proposition} \label{prop:316}
Let $v \in \widetilde{W}_\infty^{p+2}(0,T)$, where $\widetilde{W}_\infty^{p+2}(0,T)$ is defined in \eqref{eq:47}, and let~$\mathcal{P}_h^p$ be the projection operator defined in~\eqref{eq:48}. Then, the following bound holds
\begin{equation*}
    \| v - \mathcal{P}_h^p v \|_{L^2(0,T)} \le C h^{p+(p \hspace{-0.2cm} \mod 2)} \left(\| \partial_t^{p+1} v \|_{L^\infty(0,T)} +  \| \partial_t^{p+2} v \|_{L^\infty(0,T)} \right),
\end{equation*}
where the constant~$C$ depends only on~$p$ and~$T$.
\end{proposition}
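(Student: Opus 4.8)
The plan is to reduce the global estimate to a purely local, element-by-element computation, using Proposition~\ref{prop:315} to control the only nonlocal ingredient, namely the nodal values of $\mathcal{P}_h^p v$. Write $I_j := (t_{j-1},t_j)$ for $j = 1,\dots,N$. On each element I would introduce the \emph{local} operator $R_j : H^1(I_j) \to \mathbb{P}_p(I_j)$ defined by the $p+1$ conditions $R_j v(t_{j-1}) = v(t_{j-1})$ and $\int_{I_j} (R_j v)\, e^{-t/T} q \, \dd t = \int_{I_j} v \, e^{-t/T} q \, \dd t$ for all $q \in \mathbb{P}_{p-1}(I_j)$. These conditions are unisolvent on $\mathbb{P}_p(I_j)$: a degree-$p$ polynomial whose $e^{-\bullet/T}$-weighted moments against $\mathbb{P}_{p-1}(I_j)$ all vanish is a scalar multiple of the degree-$p$ orthogonal polynomial $\psi_j$ for the weight $e^{-t/T}$ on $I_j$, whose roots all lie in the open interval, so that additionally requiring the left value to vanish forces the polynomial to be zero. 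In particular $R_j$ reproduces $\mathbb{P}_p(I_j)$, and a standard Bramble--Hilbert/scaling argument (the weight lies between $e^{-1}$ and $1$, hence affects only constants) yields the local bound $\|v - R_j v\|_{L^2(I_j)} \le C h^{p+1} \sqrt{h}\, \|\partial_t^{p+1} v\|_{L^\infty(I_j)}$.

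Next I would compare $\mathcal{P}_h^p v$ with $R_j v$ on $I_j$. By construction both reproduce the same $e^{-\bullet/T}$-weighted moments of $v$ against $\mathbb{P}_{p-1}(I_j)$, so the difference $\mathcal{P}_h^p v - R_j v \in \mathbb{P}_p(I_j)$ is $e^{-\bullet/T}$-orthogonal to $\mathbb{P}_{p-1}(I_j)$, whence $\mathcal{P}_h^p v - R_j v = c_j \psi_j$ for a scalar $c_j$. This scalar is fixed by the endpoint value $c_j \psi_j(t_{j-1}) = (\mathcal{P}_h^p v - R_j v)(t_{j-1}) = \mathcal{P}_h^p v(t_{j-1}) - v(t_{j-1})$. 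For $j=1$ this vanishes, since $\mathcal{P}_h^p v(0) = v(0)$; for $j \ge 2$ the right-hand side is exactly the nodal error at $t_{j-1}$, controlled by Proposition~\ref{prop:315}. Therefore $\|\mathcal{P}_h^p v - R_j v\|_{L^2(I_j)} = |\mathcal{P}_h^p v(t_{j-1}) - v(t_{j-1})| \, \|\psi_j\|_{L^2(I_j)} / |\psi_j(t_{j-1})|$.

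The crux is the scaling estimate $\|\psi_j\|_{L^2(I_j)} \le C \sqrt{h}\, |\psi_j(t_{j-1})|$, with $C$ depending only on $p$ and $T$. I would obtain it by mapping $I_j$ onto the reference interval $(0,1)$, where $\psi_j$ becomes, up to normalization, the degree-$p$ orthogonal polynomial $\hat\psi$ for the weight $\hat w(\hat t) = e^{-h\hat t/T}$; the factor $\sqrt h$ then comes from the change of variables. Since $\hat\psi$ does not vanish at the endpoint $\hat t = 0$, and the weights $\hat w$ form a compact family of smooth positive functions (with limit $\hat w \equiv 1$ as $h \to 0^+$, giving the shifted Legendre polynomial, which is nonzero at $0$), the ratio $\|\hat\psi\|_{L^2(0,1)} / |\hat\psi(0)|$ stays bounded uniformly in $h$. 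This uniform lower bound on the endpoint value of the normalized weighted-orthogonal polynomial is the one genuinely delicate point of the argument; everything else is routine.

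Combining the two contributions and using $p + (p \bmod 2) \le p+1$ to absorb the local-approximation term, each element satisfies $\|v - \mathcal{P}_h^p v\|_{L^2(I_j)} \le C \sqrt{h}\, h^{p+(p\bmod 2)} \big(\|\partial_t^{p+1} v\|_{L^\infty(0,T)} + \|\partial_t^{p+2} v\|_{L^\infty(0,T)}\big)$. Squaring, summing over the $N = T/h$ elements, and taking the square root removes the extra $\sqrt h$ (since $N h = T$), which yields the claimed bound $\|v - \mathcal{P}_h^p v\|_{L^2(0,T)} \le C h^{p+(p\bmod 2)} \big(\|\partial_t^{p+1} v\|_{L^\infty(0,T)} + \|\partial_t^{p+2} v\|_{L^\infty(0,T)}\big)$, with $C$ depending only on $p$ and $T$.
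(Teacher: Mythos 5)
Your proof is correct, and its overall architecture --- argue element by element, control the local part by polynomial approximation, and propagate the nodal errors of Proposition~\ref{prop:315} through the degree-$p$ weighted orthogonal polynomial, whose $L^2$ norm is $\mathcal{O}(\sqrt{h})$ while its endpoint value is $\mathcal{O}(1)$ --- is the same as the paper's. The genuine difference is the intermediate comparator. The paper compares $\mathcal{P}_h^p v$ on $(t_{i-1},t_i)$ with the plain degree-$p$ weighted $L^2$ projection $\Pi_p^{e,i}v$; the resulting coefficient of $P_p^i$ then mixes the nodal error with a residual term
$\frac{(v,e^{-\bullet/T}P^i_p)_{L^2(t_{i-1},t_i)}}{(P^i_p,e^{-\bullet/T}P^i_p)_{L^2(t_{i-1},t_i)}} - \bigl(P_p^i(t_{i-1})\bigr)^{-1}\bigl(v(t_{i-1}) - \Pi_{p-1}^{e,i} v(t_{i-1})\bigr)$,
which must separately be shown to be $\mathcal{O}(h^{p+1})$ via a Taylor expansion and the orthogonality relations; this computation occupies the second half of Appendix~\ref{app:B2}. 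Your operator $R_j$, which matches the \emph{exact} left endpoint value together with the weighted moments against $\mathbb{P}^{p-1}$, makes that residual vanish identically: the coefficient of $\psi_j$ is exactly the nodal error, and the only local estimate left is a standard Bramble--Hilbert bound. This buys a visibly shorter argument, at the price of two uniformity claims --- boundedness of $R_j$ on the reference element uniformly in $h$, and the uniform lower bound on $|\hat\psi(0)|$ --- which you justify by a compactness/continuity argument on the family of weights $e^{-h\hat t/T}$. Both claims are in fact already supplied explicitly by Lemma~\ref{lem:A2} and Proposition~\ref{prop:A3} (which give $P_p^i(t_{i-1}) = (-1)^p + \mathcal{O}(h)$ and $\|P_p^i\|^2_{L^2_e(t_{i-1},t_i)} = \mathcal{O}(h)$), so you could replace the compactness step by a direct citation. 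The final assembly (absorbing $h^{p+1}$ into $h^{p+(p\bmod 2)}$, squaring, and summing over the $N=T/h$ elements) is exactly as in the paper. I see no gap.
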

\begin{proof}
See Appendix~\ref{app:B2}.
\end{proof}
\noindent 
We conclude this section by proving that~$C^1$-splines of even degree satisfy Assumption~\ref{ass:38}.
\begin{corollary} \label{cor:317}
Assumption~\ref{ass:38} is satisfied for $C^1$-continuous splines of degree $p \ge 2$, when~$p$ is even.
\end{corollary}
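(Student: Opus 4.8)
The plan is to reduce Assumption~\ref{ass:38} for the $C^1$ spline space $S_h^p(0,T)$ to the projection $\mathcal{P}_h^{p-1}$ of Propositions~\ref{prop:315}--\ref{prop:316}, applied to $\partial_t u$. First I would exploit the structure of $C^1$ splines of degree $p$ vanishing at $t=0$: the operator $\partial_t$ is a bijection from $S_h^p(0,T)$ onto the $C^0$ space $S_h^{p-1,0}(0,T)$ (injectivity because $w_h(0)=0$, and the two spaces have the same dimension $(p-1)N+1$), while $\partial_t \colon S_h^{p-1,0}(0,T)\to S_h^{p-2,-1}(0,T)$ is onto with kernel the constants. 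Consequently $\{\partial_t^2 w_h : w_h\in S_h^p(0,T)\}$ is the full discontinuous space $S_h^{p-2,-1}(0,T)$. Writing $g:=\partial_t Q_h u\in S_h^{p-1,0}(0,T)$ and $Q_h u:=\int_0^\bullet g$ (which lies in $S_h^p(0,T)$, being $C^1$, of degree $p$, and vanishing at $0$), the two defining conditions of Assumption~\ref{ass:38} become $g(T)=\partial_t u(T)$ together with $(g, e^{-\bullet/T} q_h)_{L^2(0,T)} = (\partial_t u, e^{-\bullet/T} q_h)_{L^2(0,T)}$ for all $q_h\in S_h^{p-2,-1}(0,T)$.

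These are exactly the equations defining $\mathcal{P}_h^{p-1}$ in~\eqref{eq:48} (with $p$ replaced by $p-1$), except that the nodal value is prescribed at $t=T$ instead of $t=0$. I would therefore introduce the $T$-anchored analogue $\widetilde{\mathcal{P}}_h^{p-1}$ and set $g:=\widetilde{\mathcal{P}}_h^{p-1}(\partial_t u)$. Its well-posedness follows from a backward element-by-element recursion: on the last element, $g(T)$ together with the $p-1$ weighted orthogonality conditions determine the degree-$(p-1)$ polynomial, since the homogeneous local problem forces $g=(t-T)\tilde g$ with $\int_{t_{N-1}}^T (t-T)\,\tilde g^2\, e^{-t/T}\,\dd t=0$ and hence $\tilde g\equiv 0$; the resulting value at $t_{N-1}$ then propagates the recursion leftward. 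To transfer the approximation estimates, I would use the reflection $s=T-t$, under which the anchor at $T$ becomes an anchor at $0$ and the weight $e^{-t/T}$ becomes $e^{-1}e^{s/T}$; since the proofs of Propositions~\ref{prop:315}--\ref{prop:316} in the appendices use only the smoothness and uniform positivity of the weight, they apply verbatim to $\widetilde{\mathcal{P}}_h^{p-1}$, giving $\|v-\widetilde{\mathcal{P}}_h^{p-1}v\|_{L^2(0,T)} \le C h^{(p-1)+((p-1)\bmod 2)}$.

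Finally I would close the argument using the parity of $p$: for $p$ even, $p-1$ is odd, so $(p-1)\bmod 2 = 1$ and the exponent above equals $p$. Taking $v=\partial_t u$ yields $\|\partial_t u-\partial_t Q_h u\|_{L^2(0,T)}=\|\partial_t u-g\|_{L^2(0,T)}\le C h^p$, which is precisely~\eqref{eq:39}; the first two conditions of Assumption~\ref{ass:38} hold by construction of $g$. The main obstacle is the transfer of the superconvergent approximation estimate to the $T$-anchored projection: one must confirm that reversing the recursion (equivalently, flipping the sign of the exponent in the weight) preserves the order-$p$ superconvergence of the degree-$(p-1)$ projection at the nodes. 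This is exactly the place where the evenness of $p$ is essential, since for odd $p$ the same projection would only be $O(h^{p-1})$, consistent with the fact that the corollary is claimed only for even degree.
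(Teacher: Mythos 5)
Your proposal is correct and follows the same overall strategy as the paper: reduce Assumption~\ref{ass:38} to the nodal-superconvergent weighted projection of Propositions~\ref{prop:315}--\ref{prop:316} applied to $\partial_t u$, and exploit the parity gain $(p-1)+((p-1)\bmod 2)=p$ for even $p$. The paper's own proof is a one-liner: it sets $Q_h u:=\int_0^\bullet \mathcal{P}_h^{p-1}\partial_t u(s)\,\dd s$ with $\mathcal{P}_h^{p-1}$ as in~\eqref{eq:48} and asserts that this ``coincides with'' the operator of Assumption~\ref{ass:38}. Your additional step --- replacing the $0$-anchored $\mathcal{P}_h^{p-1}$ by its $T$-anchored analogue, proving its well-posedness by a backward element-by-element recursion, and transferring the error estimates via the reflection $s=T-t$, under which the weight becomes $e^{-1}e^{s/T}$ and still satisfies~\eqref{eq:80} and~\eqref{eq:83}, so that the appendix proofs apply unchanged --- is not pedantry: Assumption~\ref{ass:38} prescribes $\partial_t Q_h u(T)=\partial_t u(T)$, and it is precisely this final-time condition that cancels the boundary term at $t=T$ in the integration by parts in the proof of Proposition~\ref{prop:310n}; the $0$-anchored projection of~\eqref{eq:48} only reproduces $\partial_t u(T)$ up to the superconvergence error, not exactly. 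Your dimension counts ($\partial_t$ bijective from $S_h^p(0,T)$ onto $S_h^{p-1,0}(0,T)$, and $\{\partial_t^2 w_h: w_h\in S_h^p(0,T)\}=S_h^{p-2,-1}(0,T)$) and the uniqueness argument on the last element are also correct. In short, your proof is complete and, on the anchor-point issue, more careful than the paper's own.
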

\begin{proof}
In~$S_h^{p,1}(0,T)$, we define the operator
\begin{equation*}
    u\,\mapsto \, Q_h u(t) := \int_0^t \mathcal{P}_h^{p-1} \partial_t u(s) \dd s,
\end{equation*}
with $\mathcal{P}_h^{p-1}$ defined in~\eqref{eq:89}. It is immediate to see that~$Q_h$ coincides with the operator defined in~Assumption~\ref{ass:38}. The bound~\eqref{eq:39} follows from Proposition \ref{prop:316}.
\end{proof}

\begin{remark}
For the projection operator~$Q_h$ onto spline spaces of regularity~$C^1$, Proposition~\ref{prop:316} implies the error estimate
\begin{equation*}
    \| \partial_t u - \partial_t Q_h u \|_{L^2(0,T)} \le C h^{p-1+(p \hspace{-0.2cm} \mod 2)} \left( \| \partial_t^{p+1} u\|_{L^\infty(0,T)} + \| \partial_t^{p+2} u\|_{L^\infty(0,T)} \right)\ \forall u \in \widetilde{W}_\infty^{p+2}(0,T),
\end{equation*}
with a constant~$C>0$ depending only on~$p$ and~$T$. The sharpness of this estimate can be seen from Figure~\ref{fig:3}, line with the {\large{\textcolor{orange}{$\bullet$}}} marker in the left plot, line with the {\footnotesize{\textcolor{cyan}{$\blacktriangle$}}} marker in the central plot, and line with the {\footnotesize{\textcolor{magenta}{$\pentagofill$}}} marker in the right plot, corresponding to $C^1$-splines of degree~$p=2$, $p=3$, and~$p=4$, respectively.
\end{remark}

\section{Variational formulation of the PDE problem} \label{sec:4}

In this section, we introduce a variational formulation the complete problem~\eqref{eq:1} that extends the formulation in~\eqref{eq:14} of the ODE problem, and prove its well-posedness.

\noindent
Let us consider first the following well-posed variational formulation of problem~\eqref{eq:1}: find 
\begin{equation} \label{eq:50}
    U \in L^2(0,T; H_0^1(\Omega)) \cap H^1(0,T; L^2(\Omega))
\end{equation}
such that $U_{|_{t=0}} = \partial_t U_{|_{t=0}} =0$ almost everywhere in~$\Omega$ and, for almost every $t \in (0,T)$,
\begin{equation} \label{eq:51}
    \langle \partial_t^2 U(\cdot,t), V \rangle_{H^1_0(\Omega)} + (c^2\nabla_{\bx} U(\cdot,t), \nabla_{\bx} V)_{L^2(\Omega)} = (F(\cdot,t), V)_{L^2(\Omega)}
\end{equation}
for all $V \in H_0^1(\Omega)$, where $\langle\cdot,\cdot\rangle_{H^1_0(\Omega)}$ is the duality product between~$[H_0^1(\Omega)]'$ and~$H^1_0(\Omega)$; see, e.g.,~\cite[Chapter 3. \S 8 Pag. 265]{LionsMagenes1972}. We restrict ourselves to a more regular framework and assume that
\begin{equation}\label{eq:52}
    F \in H^1(0,T;L^2(\Omega)), \quad F(\cdot,0) \in L^2(\Omega).
\end{equation}
We have the following result.
\begin{proposition}\label{prop:41}
Under the regularity assumption~\eqref{eq:52} on~$F$, there exists a unique solution to~\eqref{eq:51} and it satisfies
\begin{equation*}
    U \in H^1(0,T;H^1_0(\Omega))\cap H^2_{0,\bullet}(0,T;L^2(\Omega)).
\end{equation*}
\end{proposition}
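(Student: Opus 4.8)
The plan is to construct the solution by separation of variables in space, reducing the PDE to the family of scalar ODEs~\eqref{eq:2} already analyzed in Section~\ref{sec:2}, and then to read off the claimed regularity from energy estimates that are uniform in the spatial frequency. Let $\{(\mu_k, e_k)\}_{k\in\N}$ denote the eigenpairs of the positive self-adjoint operator associated with the symmetric, coercive bilinear form $(c^2\nabla_{\bx}\cdot, \nabla_{\bx}\cdot)_{L^2(\Omega)}$ on $H_0^1(\Omega)$; by compactness of the embedding $H_0^1(\Omega) \hookrightarrow L^2(\Omega)$, the eigenfunctions $\{e_k\}$ form an orthonormal basis of $L^2(\Omega)$ with $0 < \mu_1 \le \mu_2 \le \cdots \to \infty$ and $(c^2\nabla_{\bx} e_k, \nabla_{\bx} e_\ell)_{L^2(\Omega)} = \mu_k\,\delta_{k\ell}$. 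Since $c \ge c_0 > 0$ and $c \in L^\infty(\Omega)$, the induced norm $\|V\|_a^2 := (c^2\nabla_{\bx} V, \nabla_{\bx} V)_{L^2(\Omega)}$ is equivalent to the $H_0^1(\Omega)$ norm. Writing $u_k(t) := (U(\cdot,t), e_k)_{L^2(\Omega)}$ and $f_k(t) := (F(\cdot,t), e_k)_{L^2(\Omega)}$, testing~\eqref{eq:51} with $V = e_k$ shows that each $u_k$ must solve~\eqref{eq:2} with parameter $\mu = \mu_k$ and datum $f = f_k$; conversely, I would \emph{define} $U := \sum_k u_k\, e_k$ from these ODE solutions and verify a posteriori that the series converges in the required spaces and satisfies~\eqref{eq:51}. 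The regularity~\eqref{eq:52} transfers to the coefficients through Parseval: $\sum_k \|f_k\|_{H^1(0,T)}^2 = \|F\|_{H^1(0,T;L^2(\Omega))}^2$ and $\sum_k |f_k(0)|^2 = \|F(\cdot,0)\|_{L^2(\Omega)}^2$.

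Next I would establish two uniform energy estimates for~\eqref{eq:2}. Multiplying the ODE by $\partial_t u_k$ and integrating yields, for $E_k := |\partial_t u_k|^2 + \mu_k |u_k|^2$, the differential bound $\tfrac{d}{dt}\sqrt{E_k} \le |f_k|$; since $E_k(0) = 0$, this gives $\sup_{[0,T]} E_k \le T\,\|f_k\|_{L^2(0,T)}^2$ and hence $\int_0^T E_k\,\dd t \le T^2\|f_k\|_{L^2(0,T)}^2$. Summing over $k$ and using Parseval produces $U \in L^2(0,T;H_0^1(\Omega))$ and $\partial_t U \in L^2(0,T;L^2(\Omega))$. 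To reach the top-order regularity I would differentiate the ODE in time: $w_k := \partial_t u_k$ solves $\partial_t^2 w_k + \mu_k w_k = \partial_t f_k$ with $w_k(0) = 0$ and, using the equation at $t=0$ together with $u_k(0) = 0$, $\partial_t w_k(0) = \partial_t^2 u_k(0) = f_k(0)$. The same energy argument applied to $w_k$ shows that $G_k := |\partial_t^2 u_k|^2 + \mu_k |\partial_t u_k|^2$ satisfies $\int_0^T G_k\,\dd t \le 2T\big(|f_k(0)|^2 + T\|\partial_t f_k\|_{L^2(0,T)}^2\big)$. Summing and invoking the identities above bounds $\sum_k \int_0^T\!\big(|\partial_t^2 u_k|^2 + \mu_k|\partial_t u_k|^2\big)\,\dd t$ by $2T\big(\|F(\cdot,0)\|_{L^2(\Omega)}^2 + T\|\partial_t F\|_{L^2(Q_T)}^2\big) < \infty$, which yields exactly $\partial_t U \in L^2(0,T;H_0^1(\Omega))$ and $\partial_t^2 U \in L^2(0,T;L^2(\Omega))$. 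Together with $U(\cdot,0) = 0$, inherited from $u_k(0)=0$, this is the asserted $U \in H^1(0,T;H_0^1(\Omega)) \cap H^2_{0,\bullet}(0,T;L^2(\Omega))$. Uniqueness then follows by linearity: for $F \equiv 0$ every $f_k \equiv 0$, so $E_k(0)=0$ forces $E_k \equiv 0$, whence $U \equiv 0$ (alternatively, uniqueness is already contained in the cited well-posedness of~\eqref{eq:51}).

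The main obstacle I anticipate is not the formal computation but its rigorous justification. First, I must legitimately differentiate~\eqref{eq:2} in time and evaluate $\partial_t^2 u_k(0) = f_k(0)$: this requires $f_k \in H^1(0,T) \hookrightarrow C([0,T])$ (guaranteed by~\eqref{eq:52}) together with the bootstrap $u_k \in H^2(0,T) \Rightarrow \partial_t^2 u_k = f_k - \mu_k u_k \in H^1(0,T)$, so that $u_k \in H^3(0,T) \hookrightarrow C^2([0,T])$ and the pointwise identity is meaningful. Second, the partial sums $\sum_{k\le M} u_k\, e_k$ must be shown to be Cauchy in $H^1(0,T;H_0^1(\Omega)) \cap H^2(0,T;L^2(\Omega))$; this is precisely where the uniformity in $k$ of the two energy bounds is essential, the tails being controlled by $\sum_{k > M}\big(\|f_k\|_{H^1(0,T)}^2 + |f_k(0)|^2\big)$. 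Third, I must confirm that the limit satisfies~\eqref{eq:51}: since $\partial_t^2 U \in L^2(0,T;L^2(\Omega))$, the duality pairing $\langle \partial_t^2 U, V\rangle_{H^1_0(\Omega)}$ collapses to the $L^2(\Omega)$ inner product, so testing against $V = e_k$ recovers the ODE for $u_k$, and the identity extends to all $V \in H_0^1(\Omega)$ by density.
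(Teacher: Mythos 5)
Your proof is correct, and it differs from the paper only in that the paper offers no argument at all: Proposition~\ref{prop:41} is proved there by citation (to Abdulle--Henning, Prop.~4.4, and to Evans, Chapter~7.2). What you have written out is, in substance, the classical Galerkin/energy argument contained in those references, and it is very much in the spirit of the paper's own setup, which introduces the ODE~\eqref{eq:2} precisely as the eigenfunction reduction of~\eqref{eq:1}: diagonalize the spatial operator, observe that each coefficient $u_k$ solves~\eqref{eq:2} with $\mu=\mu_k$, derive the basic energy bound $\sup_{[0,T]}\bigl(|\partial_t u_k|^2+\mu_k|u_k|^2\bigr)\le T\|f_k\|^2_{L^2(0,T)}$, and obtain the top-order regularity by differentiating the equation in time --- which is exactly where both hypotheses in~\eqref{eq:52} enter, through $\sum_k\|\partial_t f_k\|^2_{L^2(0,T)}=\|\partial_t F\|^2_{L^2(Q_T)}$ and $\sum_k|f_k(0)|^2=\|F(\cdot,0)\|^2_{L^2(\Omega)}$. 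Your list of the technical obstructions (the bootstrap $u_k\in H^3(0,T)$ needed to evaluate $\partial_t^2 u_k(0)=f_k(0)$, the Cauchy property of the partial sums in $H^1(0,T;H^1_0(\Omega))\cap H^2(0,T;L^2(\Omega))$, and the identification of the limit as a solution of~\eqref{eq:51} by density in $V$) is the right one, and each item resolves as you indicate; uniqueness in the class~\eqref{eq:50} also follows as you say, since testing with $e_k$ shows each coefficient of a homogeneous solution satisfies the ODE with zero data. The only step I would tighten is the pointwise differential inequality $\frac{\dd}{\dd t}\sqrt{E_k}\le|f_k|$, which degenerates at zeros of $E_k$; replace it by the integral form $E_k(t)\le 2\int_0^t|f_k|\sqrt{E_k}\,\dd s$ and a standard comparison (or work with $\sqrt{E_k+\eps}$ and let $\eps\to0$), which yields the same bounds. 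What your route buys is a self-contained proof with explicit constants that mirrors the ODE analysis of Section~\ref{sec:2}; what the citation buys is brevity and the slightly more general well-posedness statement in the weak class~\eqref{eq:50} under $F\in L^2(Q_T)$ alone, with~\eqref{eq:52} used only for the regularity upgrade.
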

\begin{proof}
See, e.g.,~\cite[Prop.~4.4]{AbdulleHenning2017} and \cite[Chapter~7.2]{EvansBook}. 
\end{proof}

\noindent
For functions $(U,W)$ such that
\begin{equation*}
    U\in L^2(0,T;H^1_0(\Omega))\cap H^2_{0,\bullet}(0,T;L^2(\Omega)),\quad W \in H^1(0,T;H^1_0(\Omega)) \ \text{ with }\  \partial_t W(\cdot, 0)\in L^2(\Omega),
\end{equation*}
we define the space--time bilinear form $\A$ as 
\begin{equation} \label{eq:53}
\begin{aligned}
    \A(U,W)  :=  (\partial_t^2 U, \partial_t W)_{L^2(Q_T)} + (\partial_t U(\cdot,0), \partial_t W(\cdot,0))_{L^2(\Omega)} +  (c^2\nabla_{\bx} U, \nabla_{\bx} \partial_t W)_{L^2(Q_T)}.
\end{aligned}
\end{equation}
We still denote by~$\L_T$ the operator analogue to that defined in~\eqref{eq:13} but now acting on space--time functions, i.e. we define $\L_T: H^2_{0,\bullet}(0,T;L^2(\Omega)) \to H^2_{0,\bullet}(0,T;L^2(\Omega))$ as
\begin{equation} \label{eq:54}
    \L_T W(\bx,t) : = \int_0^t e^{-s/T} \partial_s W(\bx,s) \, \dd s.
\end{equation}
Then, the space--time variational formulation of the complete wave problem~\eqref{eq:1} analogous to~\eqref{eq:14} reads as follows:

\begin{tcolorbox}[
    colframe=black!50!white,
    colback=blue!5!white,
    boxrule=0.5mm,
    sharp corners,
    boxsep=0.5mm,
    top=0.5mm,
    bottom=0.5mm,
    right=0.25mm,
    left=0.1mm
]
    \begingroup
    \setlength{\abovedisplayskip}{0pt}
    \setlength{\belowdisplayskip}{0pt}
    find $U \in  H^1(0,T;H^1_0(\Omega)) \cap H^2_{0,\bullet}(0,T;L^2(\Omega))$ such that \vspace{0.2cm}
    \begin{equation} \label{eq:55}
        \A(U, \L_T W) = (F, \partial_t \L_T W)_{L^2(Q_T)} \vspace{0.2cm}
    \end{equation}
    for all $W \in H^1(0,T;H_0^1(\Omega)) \cap  H_{0,\bullet}^2(0,T;L^2(\Omega))$.
    \endgroup
\end{tcolorbox}

\noindent
We define the norm
\begin{equation} \label{eq:56}
\begin{aligned} 
    \| U \|^2_{\mathcal{V}(Q_T)} := \| \partial_t U \|_{L^2(Q_T)}^2 + \| c \nabla_{\bx} U \|^2_{L^2(Q_T)}, 
\end{aligned}
\end{equation}
in which we prove, for the form~$\A$, a coercivity property analogous to~\eqref{eq:15}.
\begin{proposition}[Coercivity in the norm~\eqref{eq:56}] \label{prop:42}
We have 
\begin{equation*}
    \A(U,\L_T U) \ge \frac{1}{2eT} \| U \|^2_{\mathcal{V}(Q_T)} \qquad \text{for all}\ \ U \in  H^1(0,T;H_0^1(\Omega)) \cap H^2_{0,\bullet}(0,T;L^2(\Omega)).
\end{equation*}
\end{proposition}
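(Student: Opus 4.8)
The plan is to extend the proof of Lemma~\ref{lem:22} to the space--time setting, observing that $\A(U,\L_T U)$ splits into a purely temporal contribution, treated exactly as in~\eqref{eq:9}, and a spatial contribution that plays the role of the $\mu$-term of~\eqref{eq:10}, now carrying the coefficient $c^2$ and the spatial gradient. The first move is to use \textit{2.} of Proposition~\ref{prop:24}, applied in the Bochner sense, to write $\partial_t \L_T U(\bx,t) = e^{-t/T}\partial_t U(\bx,t)$, and to note that $\nabla_{\bx}$ commutes with both $\partial_t$ and $\L_T$, so that $\nabla_{\bx}\partial_t\L_T U = e^{-t/T}\nabla_{\bx}\partial_t U$. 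In particular $\partial_t \L_T U(\cdot,0)=\partial_t U(\cdot,0)$, so the second term of $\A$ is exactly $\|\partial_t U(\cdot,0)\|_{L^2(\Omega)}^2$.

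Next I would evaluate the two integral terms by integrating by parts in time, pointwise in $\bx$. For $(\partial_t^2 U,\partial_t\L_T U)_{L^2(Q_T)}$ I use $\partial_t U\,\partial_t^2 U=\tfrac12\partial_t(|\partial_t U|^2)$ and the one-dimensional identity~\eqref{eq:9} with $\kappa=1/T$, then integrate over $\Omega$, obtaining $\frac{1}{2T}\int_{Q_T}e^{-t/T}|\partial_t U|^2 + \frac{1}{2e}\|\partial_t U(\cdot,T)\|^2_{L^2(\Omega)} - \tfrac12\|\partial_t U(\cdot,0)\|^2_{L^2(\Omega)}$. For the spatial term $(c^2\nabla_{\bx}U,\nabla_{\bx}\partial_t\L_T U)_{L^2(Q_T)}$, since $c=c(\bx)$ is independent of $t$ it factors through the time integration by parts; using $\nabla_{\bx}U\cdot\nabla_{\bx}\partial_t U=\tfrac12\partial_t(|\nabla_{\bx}U|^2)$ and the computation analogous to~\eqref{eq:10}, the key point is that $U(\cdot,0)=0$ (encoded in $H^2_{0,\bullet}(0,T;L^2(\Omega))$) forces $\nabla_{\bx}U(\cdot,0)=\bzero$, so the initial boundary term vanishes and there remains $\frac{1}{2T}\int_{Q_T}c^2 e^{-t/T}|\nabla_{\bx}U|^2 + \frac{1}{2e}\|c\nabla_{\bx}U(\cdot,T)\|^2_{L^2(\Omega)}$.

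Collecting the three contributions, the term $-\tfrac12\|\partial_t U(\cdot,0)\|^2_{L^2(\Omega)}$ cancels against the $+\|\partial_t U(\cdot,0)\|^2_{L^2(\Omega)}$ coming from the second term of $\A$, leaving the nonnegative remainder $\tfrac12\|\partial_t U(\cdot,0)\|^2_{L^2(\Omega)}$. Hence $\A(U,\L_T U)$ equals $\frac{1}{2T}\int_{Q_T}e^{-t/T}\bigl(|\partial_t U|^2+c^2|\nabla_{\bx}U|^2\bigr)$ plus nonnegative boundary terms at $t=0$ and $t=T$. Discarding those boundary terms and using $e^{-t/T}\ge e^{-1}$ on $[0,T]$ yields $\A(U,\L_T U)\ge \frac{1}{2eT}\int_{Q_T}\bigl(|\partial_t U|^2+c^2|\nabla_{\bx}U|^2\bigr)=\frac{1}{2eT}\|U\|_{\mathcal{V}(Q_T)}^2$, as claimed.

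I expect the only real work to be bookkeeping rather than a genuine obstacle: one must justify the pointwise-in-time integration by parts and the existence of the traces $\partial_t U(\cdot,0),\partial_t U(\cdot,T)\in L^2(\Omega)$ and $\nabla_{\bx}U(\cdot,0),\nabla_{\bx}U(\cdot,T)\in L^2(\Omega)$, which follow from the regularity $U\in H^1(0,T;H^1_0(\Omega))\cap H^2_{0,\bullet}(0,T;L^2(\Omega))$ via Fubini/Bochner arguments, together with the commutation of $\nabla_{\bx}$ with the temporal operators. The two structural ingredients that make the spatial term behave like the $\mu$-term of the ODE, namely that $c$ is independent of $t$ and that the initial datum $U(\cdot,0)=0$ annihilates the $t=0$ spatial boundary contribution, are exactly what the hypotheses provide.
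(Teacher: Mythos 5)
Your proof is correct and follows essentially the same route as the paper: the paper simply invokes items \textit{4.}--\textit{7.} of Proposition~\ref{prop:24} (which encode precisely the identities~\eqref{eq:9}--\eqref{eq:10} with $\kappa=1/T$ and the bound $e^{-t/T}\ge e^{-1}$ that you rederive by hand), applied componentwise to $\partial_t U$ and $c\nabla_{\bx}U$ after commuting $\nabla_{\bx}$ with the temporal operators. The only cosmetic difference is that you carry the cancellation of the $t=0$ boundary terms explicitly, whereas the paper absorbs it into the cited identities.
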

\begin{proof}
We readily compute, using property~\textit{2.} and then \textit{4.}--\textit{7.} of Proposition \ref{prop:24},
\begin{align*}
    \A(U, \L_T U)  =& (\partial_t^2 U, \partial_t \L_T U)_{L^2(Q_T)} + \| \partial_t U(\cdot,0)\|_{L^2(\Omega)}^2 + (c^2 \nabla_{\bx} U, \nabla_{\bx} \partial_t \L_T U)_{L^2(Q_T)}
    \\ = & \frac{1}{2T} (\partial_t U, \partial_t \L_T U)_{L^2(Q_T)} + \frac{1}{2e} \| \partial_t U(\cdot,T)\|_{L^2(\Omega)}^2 \\ 
    & + \frac{1}{2T} \int_0^T e^{-t/T}\|c\nabla_{\bx} U(\cdot, t)\|_{L^2(\Omega)}\, \dd t + \frac{1}{2e} \| c\nabla_{\bx} U(\cdot,T)\|_{L^2(\Omega)}^2
    \\ \ge & \frac{1}{2eT} \left(\|\partial_t U\|_{L^2(Q_T)}^2 + \| c\nabla_{\bx} U\|_{L^2(Q_T)}^2\right),
\end{align*}
which proves the result.
\end{proof}

\noindent
The following well-posedness result, with a stability estimate analogous to~\eqref{eq:12}, is a direct consequence.

\begin{theorem} \label{thm:43}
Let us assume the regularity on the datum~$F$ as in~\eqref{eq:52}. Then, there exists a unique solution of problem~\eqref{eq:55}, and it satisfies
\begin{equation*}
    \| U \|_{\mathcal{V}(Q_T)}  \le 2eT\, \| F\|_{L^2(Q_T)}.
\end{equation*}
\end{theorem}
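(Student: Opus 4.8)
The plan is to establish existence, uniqueness, and the stability bound for problem~\eqref{eq:55} by the standard Lax--Milgram-type argument adapted to the coercivity already proven in Proposition~\ref{prop:42}. The key structural fact is that the change of test function $W \mapsto \L_T W$ is a bijection on the test space (by the invertibility of $\L_T$ established in Proposition~\ref{prop:24}), so the bilinear form $\A(\cdot, \L_T\,\cdot)$ is coercive in the norm $\|\cdot\|_{\mathcal V(Q_T)}$ on the trial space $\mathcal X := H^1(0,T;H^1_0(\Omega)) \cap H^2_{0,\bullet}(0,T;L^2(\Omega))$.

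First I would verify that the right-hand side $W \mapsto (F, \partial_t \L_T W)_{L^2(Q_T)}$ is a bounded linear functional. Using property~\textit{8.} of Proposition~\ref{prop:24} applied slicewise in $\bx$ (so that $\|\partial_t \L_T W\|_{L^2(Q_T)} \le \|\partial_t W\|_{L^2(Q_T)} \le \|W\|_{\mathcal V(Q_T)}$) together with Cauchy--Schwarz, I obtain
\begin{equation*}
    |(F, \partial_t \L_T W)_{L^2(Q_T)}| \le \|F\|_{L^2(Q_T)} \|\partial_t \L_T W\|_{L^2(Q_T)} \le \|F\|_{L^2(Q_T)} \|W\|_{\mathcal V(Q_T)}.
\end{equation*}
This shows the functional is bounded with norm at most $\|F\|_{L^2(Q_T)}$. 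Since existence of a solution is already guaranteed by Proposition~\ref{prop:41} (the strong solution of~\eqref{eq:51} lies in $\mathcal X$ and satisfies~\eqref{eq:55}), the main remaining task is uniqueness and the quantitative bound, which both follow from coercivity.

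For uniqueness and stability, I would take any solution $U \in \mathcal X$, test with $W = U$ in~\eqref{eq:55}, and combine the coercivity of Proposition~\ref{prop:42} with the bound on the right-hand side:
\begin{equation*}
    \frac{1}{2eT} \|U\|_{\mathcal V(Q_T)}^2 \le \A(U, \L_T U) = (F, \partial_t \L_T U)_{L^2(Q_T)} \le \|F\|_{L^2(Q_T)} \|U\|_{\mathcal V(Q_T)}.
\end{equation*}
Dividing through by $\|U\|_{\mathcal V(Q_T)}$ (trivial if $U=0$) yields $\|U\|_{\mathcal V(Q_T)} \le 2eT\,\|F\|_{L^2(Q_T)}$, which is the claimed estimate; applying this to the difference of two solutions with $F=0$ gives uniqueness.

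I do not anticipate a serious analytical obstacle here, since the heavy lifting is done by Proposition~\ref{prop:42}. The one point requiring minor care is the slicewise application of the $\L_T$ mapping properties from Proposition~\ref{prop:24}: those are stated for scalar functions of $t$, and I must confirm they carry over to the Bochner setting $H^2_{0,\bullet}(0,T;L^2(\Omega))$ by Fubini/Tonelli, integrating the pointwise-in-$\bx$ inequalities over $\Omega$. This is routine because $\L_T$ acts only in time and commutes with the spatial structure, so the constants $1/e$ and $1$ transfer verbatim to the space--time $L^2(Q_T)$ norms. With that confirmed, the proof is a direct coercivity-plus-boundedness argument mirroring the passage from~\eqref{eq:15} to~\eqref{eq:12} in the ODE case.
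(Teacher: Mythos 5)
Your proposal is correct and follows essentially the same route as the paper: existence from Proposition~\ref{prop:41}, then testing with $W=U$ and combining the coercivity of Proposition~\ref{prop:42} with Cauchy--Schwarz and property~\textit{8.} of Proposition~\ref{prop:24} to get the stability bound, with uniqueness as a consequence. Your extra remark on transferring the scalar-in-time estimates to the Bochner setting is a fine (routine) point that the paper leaves implicit.
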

\begin{proof}
The existence of a solution $U$ of problem~\eqref{eq:55} with the required regularity follows from Proposition~\ref{prop:41} and the fact that the solution therein is also a solution of~\eqref{eq:55}. The stability estimate follows from the coercivity in Proposition~\ref{prop:42}, the Cauchy-Schwarz inequality, and \textit{8.} of Proposition \ref{prop:24}:
\begin{align*}
    \frac{1}{2eT} \| U \|^2_{\mathcal{V}(Q_T)} \le \A(U,\L_T U) &= (F, \partial_t \L_T U)_{L^2(Q_T)} \le \| F\|_{L^2(Q_T)} \| \partial_t \L_T U \|_{L^2(Q_T)}
    \\ & \le \| F\|_{L^2(Q_T)} \| \partial_t U \|_{L^2(Q_T)} \le \| F\|_{L^2(Q_T)} \| U \|_{\mathcal{V}(Q_T)}.
\end{align*}
The stability also implies uniqueness, and the proof is complete.
\end{proof}

\section{Discretization of the full wave problem} \label{sec:5}
The discrete counterpart of~\eqref{eq:55} is readily obtained using appropriate discrete subspaces. For the spatial discretization, let us consider a discrete space~$V^{p_{\bx}}_{h_{\bx}}(\Omega) \subset H_0^1(\Omega)$ depending on a spatial discretization parameter~$h_{\bx}$ and a polynomial degree~$p_{\bx}$, e.g., piecewise polynomial continuous functions or multivariate B-splines \cite{DeBoor2001} of degree~$p_{\bx}$, over a mesh of~$\Omega$ with mesh size~$h_{\bx}$. For the temporal discretization, as in the previous section, we consider a space~$S_{h_t}^{p_t}(0,T) \subset H_{0,\bullet}^2(0,T)
$. Then, we discretize both the trial and test functions in the space--time tensor product space
\[
Q_{\bh}^{\bp}(Q_T) := V^{p_{\bx}}_{h_{\bx}}(\Omega) \otimes S_{h_t}^{p_t}(0,T)\ \ \subset \ H^1(0,T;H_0^1(\Omega)) \cap H^2_{0,\bullet}(0,T;L^2(\Omega)).
\]
 
\noindent
The conforming discretization of \eqref{eq:55} in~$Q_{\bh}^{\bp}(Q_T)$ reads as follows:

\begin{tcolorbox}[
    colframe=black!50!white,
    colback=blue!5!white,
    boxrule=0.5mm,
    sharp corners,
    boxsep=0.5mm,
    top=0.5mm,
    bottom=0.5mm,
    right=0.25mm,
    left=0.1mm
]
    \begingroup
    \setlength{\abovedisplayskip}{0pt}
    \setlength{\belowdisplayskip}{0pt}
    \begin{equation} \label{eq:57}
    \text{find~} U_{\bh} \in Q_{\bh}^{\bp}(Q_T) \text{~such that~} \A(U_{\bh}, \L_T W_{\bh}) = (F, \partial_t \L_T W_{\bh})_{L^2(Q_T)} \text{~for all~} W_{\bh} \in Q_{\bh}^{\bp}(Q_T).
    \end{equation}
    \endgroup
\end{tcolorbox}
\noindent
The explicit expression of the discrete space--time variational formulation in~\eqref{eq:57} is
\begin{equation} \label{eq:58}
\begin{aligned}
    \int_0^T \int_\Omega \partial_t^2 U_{\bh}(\bx,t) \partial_t W_{\bh}(\bx,t) e^{-t/T} \dd \bx \, \dd t + \int_\Omega \partial_t U_{\bh}(\bx,0) \partial_t W_{\bh}(\bx,0) \, \dd \bx
    \\  \hspace{-2cm} + \int_0^T \int_\Omega c^2(\bx)\nabla_{\bx} U_{\bh}(\bx,t) \cdot\nabla_{\bx} \partial_t W_{\bh}(\bx,t) e^{-t/T} \, \dd \bx \, \dd t 
    \\  
    = \int_0^T \int_\Omega F(\bx,t) \partial_t W_{\bh}(\bx,t) e^{-t/T} \, \dd \bx \, \dd t.
\end{aligned}
\end{equation}

\noindent
The well-posedness of the discrete problem~\eqref{eq:57} is an immediate consequence of the coercivity property in Proposition~\ref{prop:42}. 
\begin{proposition} \label{prop:51}
Let the source satisfy $F \in L^2(Q_T)$. Then, there exists a unique solution to problem \eqref{eq:57}, and it satisfies
\begin{equation*}
    \| U_{\bh} \|_{\mathcal{V}(Q_T)} \le 2eT \| F\|_{{L^2(Q_T)}}.
\end{equation*}
\end{proposition}
\begin{proof}
The uniqueness of the solution follows from the coercivity in Proposition~\ref{prop:42}, while existence follows from uniqueness, as~\eqref{eq:57} is a square, finite dimensional linear system. The stability estimate is derived exactly as in Theorem \ref{thm:43}.
\end{proof}

\subsection{Error estimates under Assumption~\ref{ass:38}}
In this section, we derive error estimates under Assumption~\ref{ass:38} on~$S_h^p(0,T)$, namely in the case of quasi-optimal convergence for the ODE problem; see Section~\ref{sec:32}. We introduce two projection operators for space--time functions in Section~\ref{sec:511} and, in Section~\ref{sec:512}, we state the assumptions we make on the discretization spaces, state projection error estimates, and conclude with the error estimates for the space--time method in~\eqref{eq:57}. The approach we adopt is similar to that in \cite{DongMascottoWang2024, Gomez2025, FerrariPerugiaZampa2025}.

\subsubsection{Projection operators}\label{sec:511}
We introduce the following projection operators for space--time functions:
\begin{itemize}
\item 
$\Pg : L^2(0,T;H^1(\Omega)) \to V_{h_{\bx}}^{p_{\bx}}(\Omega) \otimes L^2(0,T)$ defined by
\begin{equation} \label{eq:59}
    (c^2 \nabla_{\bx} (\Pg - \Id) V, \nabla_{\bx} V_{h_{\bx}})_{L^2(Q_T)} = 0 \quad \text{for all~} V_{h_{\bx}} \in V_{h_{\bx}}^{p_{\bx}}(\Omega) \otimes L^2(0,T),
\end{equation}
\item 
$\Ppt : H^2(0,T;L^2(\Omega)) \to L^2(\Omega) \otimes S^{p_t}_{h_t}(0,T)$ defined by 
\begin{equation} \label{eq:60}
    (\partial_t^2 (\Ppt-\Id) W, \partial_t \L_T W_{h_t})_{L^2(Q_T)} + (\partial_t (\Ppt - \Id) W(\cdot,0), \partial_t W_{h_t}(\cdot,0))_{L^2(\Omega)} = 0 
\end{equation}
for all~$W_{h_t} \in L^2(\Omega) \otimes S_{h_t}^{p_t}(0,T)$ .
\end{itemize}
\noindent
With an abuse of notation, for~$\Ppt$ defined in~\eqref{eq:60} for space--time functions, we use the same name as for the projector defined in~\eqref{eq:40} for functions in time only. 

\noindent
The operator~$\Pg$ is well-defined, from the Lax-Milgram lemma. The coercivity estimate
\begin{equation*}
    (\partial_t^2 W_{h_t}, \partial_t \L_T W_{h_t})_{L^2(Q_T)} + \|\partial_t W_{h_t}(\cdot,0)\|^2_{L^2(\Omega)} \ge \frac{1}{2eT} \|\partial_t W_{h_t}\|^2_{L^2(Q_T)}
\end{equation*}
for all $W_{h_t} \in L^2(\Omega) \otimes S_{h_t}^{p_t}(0,T)$, which follows from \emph{4.} and~\emph{2.} of Proposition~\ref{prop:24}, together with the tensor product structure, the finite dimensionality in time, and coercivity and continuity in space, implies that also~$\Ppt$ is well-defined.
Furthermore, $\Pg$ and~$\Ppt$ satisfy 
\[
\Pg\Ppt=\Ppt\Pg, \qquad \partial_t\Pg=\Pg\partial_t, \qquad \nabla_{\bx}\Ppt=\Ppt\nabla_{\bx}.
\]

\noindent
We prove the following auxiliary result.
\begin{lemma} \label{lem:52}
Assume the regularity on the datum~$F$ as in~\eqref{eq:52}. Let~$U$ be the unique solution to~\eqref{eq:55}, and let~$U_{\bh}$ be the unique solution to the discrete problem~\eqref{eq:57}. Suppose also that
\begin{equation} \label{eq:61}
    \nabla_{\bx} \cdot (c^2 \nabla_{\bx} U) \in H^2(0,T;L^2(\Omega)).
\end{equation}
Then, it holds 
\begin{align*}
    \| U_{\bh} - \Pg \Ppt U \|_{\mathcal{V}(Q_T)} & \le 
      2eT \bigl(\| (\Id-\Pg) \partial_t^2 U \|_{{L^2(Q_T)}} + \| (\Id-\Ppt) \nabla_{\bx} \cdot (c^2 \nabla_{\bx} U) \|_{L^2(Q_T)}\bigr).
\end{align*}
\end{lemma}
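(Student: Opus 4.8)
The plan is a Céa-type argument tailored to the coercive setting of Proposition~\ref{prop:42}. Set $E_{\bh} := U_{\bh} - \Pg\Ppt U$, which lies in $Q_{\bh}^{\bp}(Q_T)$ because both $U_{\bh}$ and $\Pg\Ppt U$ do. Since $E_{\bh}$ is an admissible test function in the discrete problem~\eqref{eq:57} and, by conformity, also in the continuous problem~\eqref{eq:55}, Galerkin orthogonality yields $\A(U_{\bh}, \L_T E_{\bh}) = (F, \partial_t\L_T E_{\bh})_{L^2(Q_T)} = \A(U, \L_T E_{\bh})$. Combining this with the coercivity of Proposition~\ref{prop:42} reduces the statement to estimating the consistency error
\[
    \frac{1}{2eT}\|E_{\bh}\|^2_{\mathcal{V}(Q_T)} \le \A(E_{\bh}, \L_T E_{\bh}) = \A(U - \Pg\Ppt U, \L_T E_{\bh}).
\]

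The core of the proof is to rewrite $\A(\Pg\Ppt U, \L_T E_{\bh})$ by peeling off the two projectors in turn. For the stiffness contribution $(c^2\nabla_{\bx}\Pg\Ppt U, \nabla_{\bx}\partial_t\L_T E_{\bh})_{L^2(Q_T)}$, I would observe that $\partial_t\L_T E_{\bh} = e^{-\bullet/T}\partial_t E_{\bh}$ has spatial factor in $V_{h_{\bx}}^{p_{\bx}}(\Omega)$, so the defining relation~\eqref{eq:59} of $\Pg$ lets me replace $\Pg\Ppt U$ by $\Ppt U$ in this term. For the remaining two contributions, $(\partial_t^2\Pg\Ppt U, \partial_t\L_T E_{\bh})_{L^2(Q_T)} + (\partial_t\Pg\Ppt U(\cdot,0), \partial_t E_{\bh}(\cdot,0))_{L^2(\Omega)}$, I would use $\Pg\Ppt U = \Ppt\Pg U$ together with the defining relation~\eqref{eq:60} of $\Ppt$ with $W = \Pg U$ and $W_{h_t} = E_{\bh} \in L^2(\Omega)\otimes S^{p_t}_{h_t}(0,T)$ (noting $\partial_t\L_T E_{\bh}(\cdot,0) = \partial_t E_{\bh}(\cdot,0)$) to replace $\Ppt\Pg U$ by $\Pg U$; the commutation $\partial_t\Pg = \Pg\partial_t$, applied twice, then moves the time derivatives onto $U$. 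Subtracting the resulting identity from the analogous expansion of $\A(U, \L_T E_{\bh})$ gives
\begin{align*}
    \A(U - \Pg\Ppt U, \L_T E_{\bh}) ={}& ((\Id - \Pg)\partial_t^2 U, \partial_t\L_T E_{\bh})_{L^2(Q_T)} \\
    &+ ((\Id - \Pg)\partial_t U(\cdot,0), \partial_t E_{\bh}(\cdot,0))_{L^2(\Omega)} \\
    &+ (c^2\nabla_{\bx}(\Id - \Ppt) U, \nabla_{\bx}\partial_t\L_T E_{\bh})_{L^2(Q_T)}.
\end{align*}

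The middle term vanishes because the exact solution satisfies $\partial_t U(\cdot,0) = 0$ (and $\partial_t$ commutes with $\Pg$). For the first term, the Cauchy--Schwarz inequality together with property~\textit{8.} of Proposition~\ref{prop:24}, $\|\partial_t\L_T E_{\bh}\|_{L^2(Q_T)} \le \|\partial_t E_{\bh}\|_{L^2(Q_T)} \le \|E_{\bh}\|_{\mathcal{V}(Q_T)}$, bounds it by $\|(\Id-\Pg)\partial_t^2 U\|_{L^2(Q_T)}\|E_{\bh}\|_{\mathcal{V}(Q_T)}$. For the third term, I would integrate by parts in space: since $\Ppt$ commutes with the spatial operator, the regularity~\eqref{eq:61} makes $\nabla_{\bx}\cdot(c^2\nabla_{\bx}(\Id-\Ppt)U) = (\Id-\Ppt)\nabla_{\bx}\cdot(c^2\nabla_{\bx}U) \in L^2(Q_T)$ meaningful, and the boundary term vanishes because $\partial_t\L_T E_{\bh}(\cdot,t)\in H_0^1(\Omega)$; the same Cauchy--Schwarz estimate then bounds it by $\|(\Id-\Ppt)\nabla_{\bx}\cdot(c^2\nabla_{\bx}U)\|_{L^2(Q_T)}\|E_{\bh}\|_{\mathcal{V}(Q_T)}$. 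Inserting these two bounds and dividing by $\|E_{\bh}\|_{\mathcal{V}(Q_T)}$ yields the claimed estimate.

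The main obstacle is the bookkeeping in the second paragraph: the two projectors must be removed against test objects landing in the correct tensor-product factor, and because~\eqref{eq:60} couples the $\partial_t^2$ volume term and the $t=0$ term, these two must be manipulated jointly rather than separately. The only genuinely analytic point is the spatial integration by parts, which is precisely what the hypothesis~\eqref{eq:61} is there to license (and which also guarantees the right-hand side is finite); everything else reduces to Cauchy--Schwarz combined with the already-established mapping properties of $\L_T$ and the commutation relations between $\Pg$, $\Ppt$, and the differential operators.
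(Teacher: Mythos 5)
Your proposal is correct and follows essentially the same argument as the paper's proof: coercivity plus Galerkin orthogonality with the comparison function $\Pg\Ppt U$, removal of each projector against the appropriate tensor factor via its defining relation, vanishing of the initial-data term since $\partial_t U(\cdot,0)=0$, and spatial integration by parts licensed by~\eqref{eq:61} combined with the commutativity of $\Ppt$ with $\nabla_{\bx}\cdot(c^2\nabla_{\bx}\,\cdot)$. The only cosmetic differences are that the paper keeps $W_{\bh}$ generic before specializing and invokes property \textit{2.} of Proposition~\ref{prop:24} where you cite property \textit{8.}; both yield the same bound.
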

\begin{proof}
We use the coercivity in Proposition \ref{prop:42} and the Galerkin orthogonality and obtain, for any $W_{\bh} \in Q_{\bh}^{\bp}(Q_T)$,
\begin{equation} \label{eq:62}
\begin{aligned}
 \| U_{\bh} - W_{\bh} \|^2_{\mathcal{V}(Q_T)} 
   & \le 2eT \A(U_{\bh} - W_{\bh}, \L_T (U_{\bh} - W_{\bh}))
   \\ &  = 2eT \A(U-W_{\bh}, \L_T (U_{\bh}- W_{\bh})).
\end{aligned}
\end{equation}
Using the definitions of~$\A$ in \eqref{eq:53} and $\L_T$ in \eqref{eq:54}, we write
\begin{equation} \label{eq:63}
\begin{aligned}
     \A((U-W_{\bh}, \L_T (U_{\bh} - W_{\bh})) 
     = & (\partial_t^2 (U-W_{\bh}), \partial_t \L_T (U_{\bh}-W_{\bh}))_{L^2(Q_T)} \\ & + (\partial_t (U-W_{\bh})(\cdot,0), \partial_t (U_{\bh}-W_{\bh})(\cdot,0))_{L^2(\Omega)}
     \\ & + (c^2\nabla_{\bx} (U-W_{\bh}), \nabla_{\bx} \partial_t \L_T (U_{\bh}-W_{\bh}))_{L^2(Q_T)}
    \\ &  =: I_1 + I_2 + I_3.
\end{aligned}
\end{equation}
Now, we take~$W_{\bh} = \Pi^\nabla_{h_{\bx}} \Ppt U$ and compute, using the commutativity of the projectors and definition of $\Ppt$,
\begin{align*}
    I_1+I_2 &  = (\partial_t^2 (\Id - \Ppt \Pg)U, \partial_t \L_T (U_{\bh}-W_{\bh}))_{L^2(Q_T)} 
    \\ & \quad + (\partial_t (\Id - \Ppt \Pg)U(\cdot,0), \partial_t (U_{\bh}-W_{\bh})(\cdot,0))_{L^2(\Omega)}
    \\ & = (\partial_t^2 (\Id-\Pg)U, \partial_t \L_T (U_{\bh}-W_{\bh}))_{L^2(Q_T)} 
    \\ & \quad + (\partial_t (\Id-\Pg)U(\cdot,0), \partial_t (U_{\bh}-W_{\bh})(\cdot,0))_{L^2(\Omega)}.
\end{align*}
Then, since $\partial_t$ commutes with~$\Pg$, and~$\partial_t U(\cdot,0) = 0$ in $L^2(\Omega)$, we conclude
\begin{align*}
    I_1+I_2 = (\partial_t^2 (\Id-\Pg)U, \partial_t \L_T (U_{\bh}-W_{\bh}))_{L^2(Q_T)}.
\end{align*}
With the Cauchy-Schwarz inequality, the commutativity of $\Pg$ with $\partial_t^2$, property~\emph{2.} of Proposition~\ref{prop:24}, and the definition of the norm in \eqref{eq:56}, we obtain
\begin{equation} \label{eq:64}
\begin{aligned}
    I_1+I_2 \le \|(\Id -\Pg)\partial_t^2 U\|_{L^2(Q_T)} \| U_{\bh} - W_{\bh}\|_{\mathcal{V}(Q_T)}.
\end{aligned}
\end{equation}
\noindent
For~$I_3$, using the definition of $\Pg$, integration by parts in space with the regularity assumption in~\eqref{eq:61}, and the commutativity of $\Ppt$ with $\nabla_{\bx} \cdot (c^2 \nabla_{\bx}\cdot )$, we deduce
\begin{align*}
    I_3 &= (c^2 \nabla_{\bx} (\Id - \Pi^\nabla_{h_{\bx}} \Ppt) U, \nabla_{\bx} \partial_t \L_T (U_{\bh} - W_{\bh}))_{L^2(Q_T)}
    \\ & = (c^2 \nabla_{\bx} (\Id - \Ppt)U, \nabla_{\bx} \partial_t \L_T (U_{\bh} - W_{\bh}))_{L^2(Q_T)}
    \\ & = - (\nabla_{\bx} \cdot(c^2 \nabla_{\bx} (\Id-\Ppt) U),\partial_t \L_T (U_{\bh} - W_{\bh}))_{L^2(Q_T)}
    \\ & = - ((\Id-\Ppt)\nabla_{\bx} \cdot(c^2 \nabla_{\bx} U), \partial_t \L_T (U_{\bh} - W_{\bh}))_{L^2(Q_T)}.
\end{align*}
Using again the Cauchy-Schwarz inequality and property~\emph{2.} of Proposition~\ref{prop:24}, we obtain
\begin{equation} \label{eq:65}
\begin{aligned}
    I_3 \le \|(\Id - \Ppt) \nabla_{\bx} \cdot (c^2 \nabla_{\bx} U) \|_{L^2(Q_T)} \| U_{\bh} - W_{\bh} \|_{\mathcal{V}(Q_T)}. 
     \end{aligned}
\end{equation}
By substituting the estimates~\eqref{eq:64} and \eqref{eq:65} for~$I_1$--$I_3$ into~\eqref{eq:63}, and combining with~\eqref{eq:62}, we obtain the desired claim.
\end{proof}

\subsubsection{Convergence rates} \label{sec:512}
For the finite dimensional space~$S^{p_t}_{h_t}(0,T) \subset H_{0,\bullet}^2(0,T)$, we assume that Assumption~\ref{ass:32} and Assumption~\ref{ass:38} are satisfied, so that the properties established in Section~\ref{sec:32} are valid.
For~$V^{p_{\bx}}_{h_{\bx}}(\Omega)$, we make the following assumption, which is satisfied, for instance, for conforming finite elements on shape-regular, simplicial meshes (see, e.g.~\cite[Theorem 22.6]{ErnGuermond2021a}). From here on, we use the notation~``$\apprle$'' for~``$\le C$'', whenever the constant~$C>0$ is independent of~$h_{\bx}$ and~$h_t$. 

\begin{assumption} \label{ass:53}
The finite dimensional space~$V^{p_{\bx}}_{h_{\bx}}(\Omega) \subset H_0^1(\Omega)$ satisfies the following approximation property for some $p_{\bx} \in \mathbb{N}$, $p_{\bx} \ge 1$: for every~$W$ sufficiently smooth, there is a function~$\widetilde{W}_{h_{\bx}} \in V^{p_{\bx}}_{h_{\bx}}(\Omega)$ satisfying 
\begin{align*}
     \| \nabla_{\bx} W - \nabla_{\bx} \widetilde{W}_{h_{\bx}} \|_{L^2(\Omega)} & \apprle h_{\bx}^{p_{\bx}}. 
\end{align*}
\end{assumption}
\noindent
We have the following projection error estimates.
\begin{lemma} \label{lem:54}
Under Assumptions~\ref{ass:32}, \ref{ass:38}, and~\ref{ass:53}, for any function~$U$ sufficiently smooth, it holds
\begin{align} 
    \| (\Id - \Pg) \partial_t^2 U \|_{{L^2(Q_T)}} & \apprle h_{\bx}^{p_{\bx}+\sigma},
\label{eq:66}
    \\ \| (\Id - \Ppt) \nabla_{\bx} \cdot (c^2 \nabla_{\bx} U) \|_{L^2(Q_T)} & \apprle h_t^{p_t+1},
\label{eq:67}
    \\ \| \partial_t (\Id - \Pi^{\nabla}_{h_{\bx}} \Ppt )U\rVert_{{L^2(Q_T)}} & \apprle h_t^{p_t}  + h^{p_{\bx}+\sigma}_{\bx}, 
\label{eq:68}
    \\ \lVert (\Id - \Pg\Ppt) U \rVert_{{L^2(Q_T)}} & \apprle h_t^{p_t+1} +  h_{\bx}^{p_{\bx}+\sigma},
\label{eq:69}
    \\ \lVert c\nabla_{\bx} (\Id - \Pg\Ppt)U\rVert_{{L^2(Q_T)}} & \apprle h^{p_t+1}_t  + h_{\bx}^{p_{\bx}},
\label{eq:70}
\end{align}
where~$\sigma=0$ in general, and~$\sigma=1$ if the standard elliptic regularity assumption in space is satisfied.
\end{lemma}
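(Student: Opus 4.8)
The plan is to reduce every space–time estimate to a purely spatial or purely temporal one, exploiting that $\Pg$ and $\Ppt$ commute with each other (via the identity $\Pg\Ppt=\Ppt\Pg$ recorded in Section~\ref{sec:511}) and that each commutes with the differential operators acting on the other variable, as well as with multiplication by $c=c(\bx)$. Two families of one–variable estimates are the building blocks. In space, $\Pg$ is the elliptic projection associated with the weighted form $(c^2\nabla_{\bx}\cdot,\nabla_{\bx}\cdot)_{L^2(\Omega)}$: from C\'ea's lemma and Assumption~\ref{ass:53} I get the energy bound $\|c\nabla_{\bx}(\Id-\Pg)v\|_{L^2(\Omega)}\apprle h_{\bx}^{p_{\bx}}$, and from an Aubin--Nitsche duality argument the $L^2$ bound $\|(\Id-\Pg)v\|_{L^2(\Omega)}\apprle h_{\bx}^{p_{\bx}+\sigma}$, where the extra power $\sigma=1$ is gained exactly when the dual problem $-\nabla_{\bx}\cdot(c^2\nabla_{\bx}\psi)=(\Id-\Pg)v$ enjoys $H^2$ elliptic regularity. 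Applied pointwise in $t$ and integrated over $(0,T)$, these yield their space–time versions. In time, the relevant estimates are those of Corollary~\ref{cor:312}, namely $\|\partial_t^\ell(\Id-\Ppt)w\|_{L^2(0,T)}\apprle h_t^{p_t+1-\ell}$ for $\ell=0,1,2$, which also furnish the $L^2(0,T)$–stability of $\Ppt$ that I use repeatedly.

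Estimates~\eqref{eq:66} and~\eqref{eq:67} follow at once. For~\eqref{eq:66} I apply the spatial $L^2$ bound to $\partial_t^2U(\cdot,t)$ and integrate in $t$. For~\eqref{eq:67} I use that $\Ppt$ acts only in time, so the temporal $L^2$ bound (Corollary~\ref{cor:312} with $\ell=0$) applies pointwise in $\bx$ to $g:=\nabla_{\bx}\cdot(c^2\nabla_{\bx}U)$, which lies in $H^2(0,T;L^2(\Omega))$ by the regularity hypothesis~\eqref{eq:61}; integrating over $\Omega$ gives the rate $h_t^{p_t+1}$.

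For the mixed terms I split $\Id-\Pg\Ppt=(\Id-\Ppt)+(\Id-\Pg)\Ppt$ and use commutativity to rewrite $(\Id-\Pg)\Ppt=\Ppt(\Id-\Pg)$. For~\eqref{eq:69} the two pieces are bounded by $\|(\Id-\Ppt)U\|_{L^2(Q_T)}\apprle h_t^{p_t+1}$ and $\|\Ppt(\Id-\Pg)U\|_{L^2(Q_T)}\apprle\|(\Id-\Pg)U\|_{L^2(Q_T)}\apprle h_{\bx}^{p_{\bx}+\sigma}$, using $L^2(0,T)$–stability of $\Ppt$. For~\eqref{eq:68} I differentiate in time: the first piece gives $\|\partial_t(\Id-\Ppt)U\|_{L^2(Q_T)}\apprle h_t^{p_t}$ ($\ell=1$), while in the second I move $\partial_t$ through $\Pg$ to obtain $(\Id-\Pg)\partial_t\Ppt U$ and apply the spatial $L^2$ bound to $\partial_t\Ppt U$, whose spatial smoothness follows from commutativity of $\Ppt$ with spatial derivatives and its stability, yielding $h_{\bx}^{p_{\bx}+\sigma}$. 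For~\eqref{eq:70} I apply $c\nabla_{\bx}$: in the first piece $\Ppt$ commutes with both $\nabla_{\bx}$ and the multiplication by $c$, giving $(\Id-\Ppt)(c\nabla_{\bx}U)$ and the rate $h_t^{p_t+1}$; in the second the same commutation produces $\Ppt\bigl(c\nabla_{\bx}(\Id-\Pg)U\bigr)$, bounded by $h_{\bx}^{p_{\bx}}$ via $\Ppt$–stability and the \emph{energy} estimate, so $\sigma$ does not appear here.

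The main obstacle is the $L^2$–in–space estimate for $\Pg$ and the bookkeeping around it: one must carry out the Aubin--Nitsche duality with the merely $L^\infty$ weight $c^2$—harmless for the energy norm but requiring the stated elliptic–regularity hypothesis to gain the factor $\sigma=1$—and one must justify that $\Ppt U$ and $\partial_t\Ppt U$ retain enough spatial regularity to feed the spatial estimates, which rests on the commutation of $\Ppt$ with spatial operators together with its $L^2(0,T)$– and $H^1(0,T)\to L^2(0,T)$–stability. Once these one–variable estimates and commutation relations are in place, the remaining steps are routine.
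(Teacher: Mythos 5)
Your treatment of \eqref{eq:66} and \eqref{eq:67} coincides with the paper's (energy estimate plus Aubin--Nitsche duality for $\Pg$, and the space--time analogue of Corollary~\ref{cor:312} for $\Ppt$). The gap lies in the mixed estimates \eqref{eq:68}--\eqref{eq:70}: you split $\Id-\Pg\Ppt=(\Id-\Ppt)+\Ppt(\Id-\Pg)$ and then invoke the ``$L^2(0,T)$-stability of $\Ppt$'' (and, for \eqref{eq:68}, uniform-in-$h_t$ spatial smoothness of $\partial_t\Ppt U$). That stability is neither established in the paper nor obvious: $\Ppt$ is defined only on $H^2_{0,\bullet}(0,T)$ through the non-symmetric form $a_0(\cdot,\L_T\,\cdot)$, which involves second derivatives and the point value of $\partial_t(\cdot)$ at $t=0$, so a bound of the form $\|\Ppt v\|_{L^2}\apprle\|v\|_{L^2}$ does not even make sense as stated, and the coercivity \eqref{eq:42} only controls $\|\partial_t\Ppt v\|_{L^2}$ by quantities involving $\partial_t^2 v$ (or, after an inverse inequality, at the price of a factor $h_t^{-1}$). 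The only quantitative information available is the error bound of Corollary~\ref{cor:312}, whose constant depends on higher derivatives of the argument; since in \eqref{eq:70} the argument of $\Ppt$ is the $h_{\bx}$-dependent function $c\nabla_{\bx}(\Id-\Pg)U$, turning your argument into a proof would require tracking that dependence explicitly. As written, the step is unjustified.

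The paper avoids this entirely by using the other splitting, $\Id-\Pg\Ppt=\Pg(\Id-\Ppt)+(\Id-\Pg)$, so that the operator applied to the error of the other projection is always $\Pg$. Since $\Pg$ is the orthogonal projection in the $c^2$-weighted $H^1_0(\Omega)$ inner product, it is stable in the energy norm with constant one; combined with the spatial Poincar\'e inequality and the commutation $\nabla_{\bx}\Ppt=\Ppt\nabla_{\bx}$, all three mixed terms reduce to the temporal estimates \eqref{eq:74} applied to the \emph{fixed} functions $U$ and $c\nabla_{\bx}U$, plus the spatial estimates \eqref{eq:71}--\eqref{eq:73}. You should either switch to that decomposition or, if you keep yours, actually prove the required stability of $\Ppt$ (e.g.\ via $\|\Ppt v\|\le\|v\|+\|(\Id-\Ppt)v\|$ together with a careful account of how the constants in Corollary~\ref{cor:312} depend on $v$).
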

\begin{proof}
Assumption~\ref{ass:53} implies that the $\Pg$ satisfies the following approximation property:
\begin{equation} \label{eq:71}
    \| c \nabla_{\bx} (\Id -\Pg) U\|_{L^2(Q_T)} = \inf_{W_{h_{\bx}} \in V_{h_{\bx}}^{p_{\bx}}(\Omega) \otimes L^2(0,T)} \| c\nabla_{\bx} (U-W_{h_{\bx}}) \|_{L^2(Q_T)} \apprle h_{\bx}^{p_{\bx}}.
\end{equation}
Then, if the elliptic regularity in space is satisfied, a standard duality argument yields
\begin{equation} \label{eq:72}
    \| (\Id -\Pg) U\|_{L^2(Q_T)} \apprle h_{\bx} \| c \nabla_{\bx} (\Id -\Pg) U\|_{L^2(Q_T)} \apprle h_{\bx}^{p_{\bx}+1}. 
\end{equation}
However, if elliptic regularity is not satisfied, we use the Poincar\'e's inequality in space and obtain 
\begin{equation} \label{eq:73}
    \| (\Id -\Pg) U\|_{L^2(Q_T)} \apprle \| c \nabla_{\bx} (\Id -\Pg) U\|_{L^2(Q_T)} \apprle h_{\bx}^{p_{\bx}}. 
\end{equation}
%
Inequality \eqref{eq:66} is then a consequence of \eqref{eq:72} and \eqref{eq:73}. \\
The space--time analogue of Corollary \ref{cor:312} gives, for the projector $\Ppt$ defined in \eqref{eq:60},
\begin{equation} \label{eq:74}
    \| \partial_t^\ell U - \partial_t^\ell \Pi_h^{\partial_t^2} U\|_{L^2(Q_T)} \apprle h_t^{p_t+1-\ell},\qquad \ell=0,1,2.
\end{equation}
%
Inequality \eqref{eq:67} follows from \eqref{eq:74} with $\ell=0$. \\
\noindent
In the subsequent steps of the proof, we repeatedly use the following identities, which follow from the commutativity of the projectors:
\begin{align*}
    \Id - \Pg \Ppt = \Id -  \Ppt\Pg = \Pg(\Id - \Ppt) + \Id - \Pg.
\end{align*}
%
\noindent
To prove \eqref{eq:68}, with the commutativity of $\Pg$ and $\partial_t$ and with the Poincar\'e inequality in space, we compute
\begin{align*}
    \lVert \partial_t(\Id - \Pg \Ppt)U\rVert_{{L^2(Q_T)}} & \leq \lVert \partial_t (\Id - \Pg)U \rVert_{{L^2(Q_T)}} + \lVert\partial_t \Pg (\Id - \Ppt)U \rVert_{{L^2(Q_T)}} \\
    & \apprle \lVert (\Id - \Pg) \partial_t U \rVert_{{L^2(Q_T)}} + \lVert c \nabla_{\bx} \Pg \partial_t (\Id - \Ppt) U \rVert_{{L^2(Q_T)}}
    \\ & \leq \lVert (\Id - \Pg) \partial_t U \rVert_{{L^2(Q_T)}} + \lVert \partial_t (\Id - \Ppt) c \nabla_{\bx} U \rVert_{{L^2(Q_T)}},
\end{align*}
where, in the last step, we used the stability of the elliptic projector $\Pg$ in $H^1_0(\Omega)$ and the commutativity of~$\nabla_{\bx}$ with~$\partial_t$ and~$\Ppt$. Then, \eqref{eq:68} follows from the approximation properties~\eqref{eq:72}, \eqref{eq:73}, and~\eqref{eq:74} with~$\ell=1$.

\noindent
For inequality \eqref{eq:69}, the Poincar\'e inequality in space, the stability of $\Pg$ in $H_0^1(\Omega)$, and the commutativity properties imply
\begin{align*}
    \lVert (\Id - \Pg\Ppt) U \rVert_{{L^2(Q_T)}} & \leq \lVert \Pg(\Id - \Ppt) U \rVert_{{L^2(Q_T)}} + \lVert (\Id - \Pg) U\rVert_{{L^2(Q_T)}} 
    \\ & \apprle \lVert c \nabla_{\bx} \Pg(\Id - \Ppt) U\rVert_{{L^2(Q_T)}} + \lVert (\Id-\Pg) U\rVert_{{L^2(Q_T)}} 
    \\ & \apprle \lVert (\Id - \Ppt) c \nabla_{\bx} U\rVert_{{L^2(Q_T)}} + \lVert (\Id-\Pg) U\rVert_{{L^2(Q_T)}},
\end{align*}
and~\eqref{eq:69} follows from the approximation properties properties~\eqref{eq:72}, \eqref{eq:73}, and~\eqref{eq:74} with~$\ell=0$.

\noindent
Finally, for~\eqref{eq:70}, we proceed similarly and compute, with the stability of $\Pg$ in $H_0^1(\Omega)$ and the commutativity properties,
\begin{align*}
     \lVert c \nabla_{\bx} (\Id - \Pg \Ppt)U\rVert_{{L^2(Q_T)}} 
    &  \leq \lVert c \nabla_{\bx} \Pg (\Id - \Ppt)U \rVert_{{L^2(Q_T)}} + \lVert c \nabla_{\bx} (\Id - \Pg)U \rVert_{{L^2(Q_T)}}
    \\ &  \leq \lVert (\Id - \Ppt)  c\nabla_{\bx} U \rVert_{{L^2(Q_T)}} + \lVert c \nabla_{\bx} (\Id - \Pg)U \rVert_{{L^2(Q_T)}},
\end{align*}
and conclude with \eqref{eq:71} and \eqref{eq:74} with~$\ell=0$.
\end{proof}
\noindent
We now establish the error estimates for the space--time method in~\eqref{eq:57}.
\begin{theorem} \label{th:55}
Assume the regularity on the datum~$F$ as in~\eqref{eq:52}. Let~$U$ be the unique solution to~\eqref{eq:55}, and let~$U_{\bh}$ be the unique solution to the discrete problem~\eqref{eq:57}. Under Assumptions~\ref{ass:32}, \ref{ass:38}, and~\ref{ass:53}, for $U$ sufficiently smooth, it holds

\begin{align*}
    \| \partial_t U- \partial_t U_{\bh} \|_{L^2(Q_T)}  & \apprle h_t^{p_t} + h_{\bx}^{p_{\bx}+\sigma},
    \\\| \nabla_{\bx} U- \nabla_{\bx} U_{\bh} 
     \|_{L^2(Q_T)} & \apprle h_t^{p_t+1}+ h_{\bx}^{p_{\bx}},
    \\ \| U- U_{\bh} \|_{L^2(Q_T)} & \apprle h_t^{p_t+1} + h_{\bx}^{p_{\bx}+\sigma}.
\end{align*}
where~$\sigma=0$ in general, and~$\sigma=1$ if the standard elliptic regularity assumption in space is satisfied.
\end{theorem}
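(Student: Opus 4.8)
The plan is to carry out a Céa-type argument in each of the three norms, exploiting the quasi-best-approximation bound that has effectively already been established in Lemma~\ref{lem:52}. For every norm I would first insert the projection $\Pg\Ppt U$ by the triangle inequality, splitting the total error into a \emph{projection part} $(\Id-\Pg\Ppt)U$, controlled directly by Lemma~\ref{lem:54}, and a \emph{discrete part} $U_{\bh}-\Pg\Ppt U$, which belongs to $Q_{\bh}^{\bp}(Q_T)$ and will be controlled in the energy norm.

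The key step is to bound the discrete part in the $\mathcal{V}(Q_T)$ norm. Combining Lemma~\ref{lem:52} with the projection estimates~\eqref{eq:66} and~\eqref{eq:67} gives at once
\[
\| U_{\bh}-\Pg\Ppt U\|_{\mathcal{V}(Q_T)} \apprle h_t^{p_t+1} + h_{\bx}^{p_{\bx}+\sigma}.
\]
Since $\|\cdot\|_{\mathcal{V}(Q_T)}$ dominates both $\|\partial_t\cdot\|_{L^2(Q_T)}$ and $\|c\nabla_{\bx}\cdot\|_{L^2(Q_T)}$ (and hence, using $c\ge c_0>0$, also $\|\nabla_{\bx}\cdot\|_{L^2(Q_T)}$), this single inequality governs the discrete part in all three cases. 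For the $\partial_t$-error I would then add the projection bound~\eqref{eq:68}, namely $h_t^{p_t}+h_{\bx}^{p_{\bx}+\sigma}$; since $h_t^{p_t+1}\le h_t^{p_t}$, the claimed rate $h_t^{p_t}+h_{\bx}^{p_{\bx}+\sigma}$ follows. For the $\nabla_{\bx}$-error I would add the projection bound~\eqref{eq:70}, that is $h_t^{p_t+1}+h_{\bx}^{p_{\bx}}$, and absorb $h_{\bx}^{p_{\bx}+\sigma}\le h_{\bx}^{p_{\bx}}$ to reach $h_t^{p_t+1}+h_{\bx}^{p_{\bx}}$.

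The $L^2$-error requires one additional ingredient, the Poincaré inequality in time. Because every element of $S_{h_t}^{p_t}(0,T)$ vanishes at $t=0$ and $\Pg$ acts only in the spatial variable, both $U_{\bh}$ and $\Pg\Ppt U$ vanish at $t=0$, so their difference satisfies
\[
\| U_{\bh}-\Pg\Ppt U\|_{L^2(Q_T)} \le \tfrac{2T}{\pi}\,\|\partial_t(U_{\bh}-\Pg\Ppt U)\|_{L^2(Q_T)} \le \tfrac{2T}{\pi}\,\| U_{\bh}-\Pg\Ppt U\|_{\mathcal{V}(Q_T)},
\]
which is again controlled by the displayed bound above; adding the projection estimate~\eqref{eq:69} produces the final rate $h_t^{p_t+1}+h_{\bx}^{p_{\bx}+\sigma}$. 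I expect no genuine difficulty: all the substantive analysis — coercivity, Galerkin orthogonality, commutativity of $\Pg$ and $\Ppt$, and the tensor-product projection-error bounds — has been front-loaded into Lemmas~\ref{lem:52} and~\ref{lem:54}, so the remaining task is purely the bookkeeping of triangle inequalities and of identifying which power of $h_t$ or $h_{\bx}$ dominates in each norm.
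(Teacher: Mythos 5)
Your proposal is correct and follows essentially the same route as the paper: the paper's own proof is a one-sentence appeal to the triangle inequality, the definition of the norm in~\eqref{eq:56}, Lemma~\ref{lem:52}, and Lemma~\ref{lem:54}, with the Poincar\'e inequality (in time) reserved for the $L^2$ bound, which is precisely the decomposition into projection part and discrete part that you carry out. Your write-up simply makes explicit the bookkeeping that the paper leaves implicit, and all the individual steps (including the observation that both $U_{\bh}$ and $\Pg\Ppt U$ vanish at $t=0$ so that the temporal Poincar\'e inequality applies) check out.
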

\begin{proof}
The three inequalities can be derived using the triangle inequality, the definition of the norm in~\eqref{eq:56}, Lemma \ref{lem:52}, and Lemma \ref{lem:54}, with the Poincar\'e inequality used specifically for the third one.
\end{proof}

\subsection{Numerical validation}
As a test case, we consider the one–dimensional spatial domain~$\Omega = (0,1)$, the final time~$T=1$, and the wave velocity~$c\equiv 1$. The data are chosen such that the exact solution of~\eqref{eq:58} is
\begin{equation} \label{eq:75}
    U(x,t) = \sin(x \pi) \sin^2 \left(\frac{5}{4} \pi t \right), \quad (x,t) \in (0,1) \times (0,1).
\end{equation}
For the space--time Galerkin discretization, we choose the discrete spaces generated by B-splines over uniform meshes for both~$V_{h_{\bx}}^{p_{\bx}}(0,1)$ and ~$S_{h_t}^{p_t}(0,1)$, using the same polynomial degree~$p = p_{\bx} = p_t$, but allowing for different mesh sizes~$h_{\bx}$ and~$h_t$. For $V_{h_{\bx}}^{p_{\bx}}(0,1)$, we always employ \emph{maximal regularity splines}, while, for $V_{h_t}^{p_t}(0,1)$, we vary the regularity in the numerical tests.

\noindent
In the first test, we demonstrate the unconditional stability of the proposed method by calculating the relative errors in the~$H^2$ seminorm,~$H^1$ norm, and~$L^2$ norm, while fixing~$h_t = 0.125$ and progressively decreasing~$h_{\bx}$. These norms and seminorms are defined as follows:
\begin{equation} \label{eq:76}
\begin{aligned}
    \| U \|_{L^2(Q_T)} := & \left(\int_0^T \int_\Omega U^2(\bx,t) \, \dd \bx \, \dd t\right)^{\frac{1}{2}} \hspace{-0.15cm}, \, \, \, \| U \|_{H^1(Q_T)} := \| \partial_t U \|_{L^2(Q_T)} + \| c\nabla_{\bx} U \|_{L^2(Q_T)} 
    \\ & \hspace{-0.2cm} | U |_{H^2(Q_T)} := \| \partial_t^2 U \|_{L^2(Q_T)} + \|\text{div}(c^2 \nabla_{\bx} U) \|_{L^2(Q_T)} + \| c\nabla_{\bx} \partial_t U \|_{L^2(Q_T)}.
    \end{aligned}
\end{equation}
In Figure~\ref{fig:4}, we present the relative errors in these norms/seminorms for~$p = 2, 3, 4$ and \emph{maximal regularity} splines in both space and time. No instabilities are observed in the results.

\begin{figure}[ht]
\centering
\begin{minipage}{0.325\textwidth}
    \centering
   \begin{tikzpicture}
    \begin{loglogaxis}[
        legend style={draw=none},
        title =~$H^2$-error,
        xlabel =~$h_t/h_{\bx}$,
        width=0.9\textwidth,
        height=0.75\textwidth]
        \pgfplotstableread{
        h  error
        1     0.247481413739432
        2     0.238441840732300
        4     0.233508350309428
        8     0.232045544723915
        16    0.231734762504643
        32    0.231723474160233
        64    0.231718175142824
        128   0.231717504533943
        256   0.231717374937925
        512   0.231717295848743
        } \datatable
        \addplot[mark=*, blue] table[x=h, y=error] \datatable;
        \pgfplotstableread{
        h  error
        1     0.027998171957310
        2     0.027339759917056
        4     0.027243865461405
        8     0.027237839628720
        16    0.027237115298263
        32    0.027237028673317
        64    0.027237019568328
        128   0.027237017355619
        256   0.027237017063335
        512   0.027237017014680
        } \datatable
        \addplot[mark=square*, red] table[x=h, y=error] \datatable;
        \pgfplotstableread{
        h  error
        1     0.004426327674723
        2     0.004397042395973
        4     0.004396126340963
        8     0.004396114358015
        16    0.004396113467925
        32    0.004396113403149
        64    0.004396113400058
        128   0.004396113401344
        256   0.004396113402024
        512   0.004396113402522
        } \datatable
        \addplot[mark=diamond*, darkgreen, mark size=3] table[x=h, y=error] \datatable;
    \end{loglogaxis}
    \end{tikzpicture}
\end{minipage}
\begin{minipage}{0.325\textwidth}
    \centering
    \begin{tikzpicture}
    \begin{loglogaxis}[
        legend style={draw=none},
        width=0.9\textwidth,
        title =~$H^1$-error,
        xlabel =~$h_t/h_{\bx}$,
        height=0.75\textwidth]
        \pgfplotstableread{
        h  error
        1     0.047478679385498
        2     0.046977801834104
        4     0.046950331310437
        8     0.046947589949453
        16    0.046947412084121
        32    0.046947389678381
        64    0.046947388387963
        128   0.046947388423246
        256   0.046947388419273
        512   0.046947388457831
        } \datatable
        \addplot[mark=*, blue] table[x=h, y=error] \datatable;
        \pgfplotstableread{
        h  error
        1     0.004436562138127
        2     0.004409019376268
        4     0.004406253469785
        8     0.004406188436507
        16    0.004406187385570
        32    0.004406187297242
        64    0.004406187305489
        128   0.004406187307586
        256   0.004406187308413
        512   0.004406187311675
        } \datatable
        \addplot[mark=square*, red] table[x=h, y=error] \datatable;
        \pgfplotstableread{
        h  error
        1     0.000759890698107
        2     0.000758352183741
        4     0.000758348993527
        8     0.000758348741053
        16    0.000758348746429
        32    0.000758348747578
        64    0.000758348747576
        128   0.000758348747649
        256   0.000758348747567
        512   0.000758348747922
        } \datatable
        \addplot[mark=diamond*, darkgreen, mark size=3] table[x=h, y=error] \datatable;
    \end{loglogaxis}
    \end{tikzpicture}
\end{minipage}
\begin{minipage}{0.325\textwidth}
    \centering
     \begin{tikzpicture}
    \begin{loglogaxis}[
        legend style={draw=none},
        width=0.9\textwidth,
        title =~$L^2$-error,
        xlabel =~$h_t/h_{\bx}$,
        height=0.75\textwidth]
        \pgfplotstableread{
        h  error
        1     0.012285195418693
        2     0.012285152618984
        4     0.012285138066227
        8     0.012285137739979
        16    0.012285137721838
        32    0.012285137717530
        64    0.012285137714858
        128   0.012285137715135
        256   0.012285137711463
        512   0.012285137708621
        } \datatable
        \addplot[mark=*, blue] table[x=h, y=error] \datatable;
        \pgfplotstableread{
        h  error
        1     0.000802468054518438
        2     0.000802301732395376
        4     0.000802300051917742
        8     0.000802300034748272
        16    0.000802300034720438
        32    0.000802300034724396
        64    0.000802300034715466
        128   0.000802300034715966
        256   0.000802300034718397
        512   0.000802300034741443
        } \datatable
        \addplot[mark=square*, red, mark size=2] table[x=h, y=error] \datatable;
        \pgfplotstableread{
        h  error
        1     0.000132740439922740
        2     0.000132713716781978
        4     0.000132713595121259
        8     0.000132713586245648
        16    0.000132713586318472
        32    0.000132713586319023
        64    0.000132713586273442
        128   0.000132713586376798
        256   0.000132713586477945
        512   0.000132713586594247
        } \datatable
        \addplot[mark=diamond*, darkgreen, mark size=3] table[x=h, y=error] \datatable;
    \end{loglogaxis}
    \end{tikzpicture}
\end{minipage}
\caption{Relative errors in the norms/seminorms defined in~\eqref{eq:76}, with maximal regularity splines in both space and time for~$p=2$ (\textcolor{blue}{$\bullet$} marker),~$p=3$ ({\footnotesize{\textcolor{red}{$\blacksquare$}}} marker) and~$p=4$ ({\footnotesize{\textcolor{darkgreen}{\(\blacklozenge\)}}} marker), and the exact solution as in~\eqref{eq:75}. Here,~$h_t = 0.125$ and~$h_{\bx}$ decreases.}
\label{fig:4}
\end{figure}
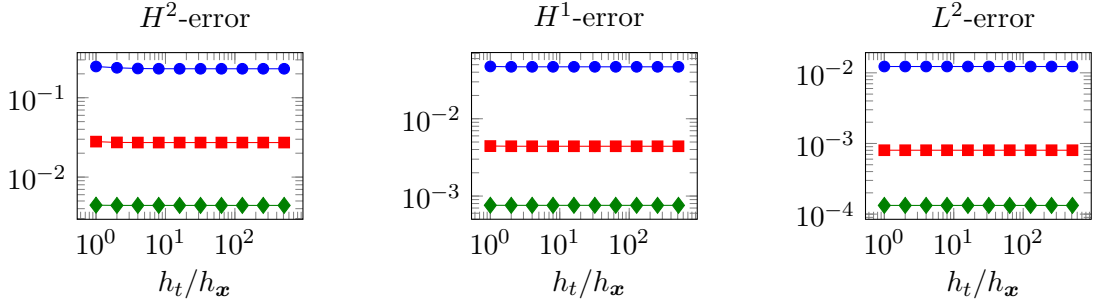

\noindent
In the second test, we investigate the order of convergence in the three norms/seminorms in~\eqref{eq:76}. To this end, we consider two situations: $C^1$-regular B-splines and B-splines with maximal regularity for the discrete spaces in time. In all the tests  we consider~$h_{\bx} = h_t = h$.

\noindent
In Figure~\ref{fig:5}, we report the relative errors for~$p=3$ and $p=4$ and $C^1$-splines in time. As proven in Sections \ref{sec:32} and \ref{sec:33}, quasi-optimal order of convergence is achieved when $p$ is even, and suboptimal by one order when $p$ is odd.

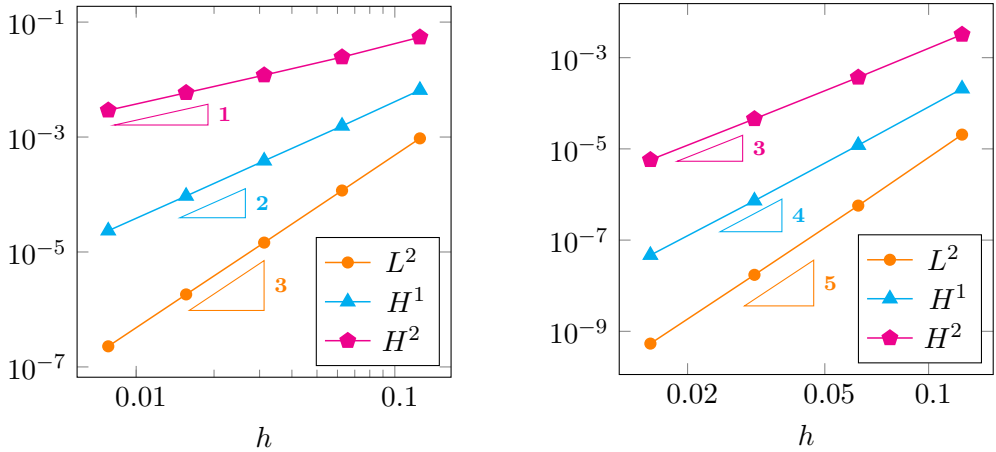
\begin{figure}[h!]
\centering
\begin{minipage}{0.45\textwidth}
\begin{tikzpicture}
\begin{groupplot}[group style={group size=2 by 1},height=6.5cm,width=6.5cm, every axis label={font=\normalsize}]
        
    \nextgroupplot[xmode=log, 
                log x ticks with fixed point,
                xtick={0.01,0.1},
                ytick={0.1,0.001,0.00001,0.0000001,0.00000001},
                xlabel={$h$},
                ymode=log,
                legend pos=south east, 
                legend style={nodes={scale=1, transform shape}},]
        \pgfplotstableread{
            x       y
            0.125     0.000948253971128572
            0.0625    0.000117338697542266
            0.03125   0.000014561655308654
            0.0156    0.000001826994452005
            0.0078    0.000000228412969056
        } \Lduepdue
    \addplot[orange, mark=*, mark options={scale=1}, line width=0.02cm] table[x=x, y=y] \Lduepdue;
        \pgfplotstableread{
            x       y
           0.125    0.006546342449583
           0.0625    0.001557986403072
            0.03125   0.000385684285002
            0.0156    0.000094061652807
            0.0078    0.000023424214679
        } \Hunopdue     
        \addplot[cyan, mark=triangle*, mark options={scale=1.5}, line width=0.02cm] table[x=x, y=y] \Hunopdue;
        \pgfplotstableread{
            x       y
            0.125    0.054522407357225
            0.0625    0.024570654938842
            0.03125   0.011930306834046
            0.0156    0.005884679723263
            0.0078    0.002920609359845
        } \Hduepdue
        \addplot[magenta, mark=pentagon*, mark options={scale=1.5}, line width=0.02cm] table[x=x, y=y] \Hduepdue;

        \logLogSlopeTriangle{0.35}{0.25}{0.68}{1}{magenta};        
        \logLogSlopeTriangle{0.45}{0.175}{0.43}{2}{cyan};        
        \logLogSlopeTriangle{0.5}{0.2}{0.18}{3}{orange};
        
        \legend
        {$L^2$,~$H^1$,$H^2$}
    \end{groupplot}
\end{tikzpicture}
\end{minipage}
\begin{minipage}{0.45\textwidth}
\begin{tikzpicture}
\begin{groupplot}[group style={group size=2 by 1},height=6.5cm,width=6.5cm, every axis label={font=\normalsize}]
        
    \nextgroupplot[xmode=log, 
                log x ticks with fixed point,
                xtick={0.02,0.05,0.1},
                ytick={0.001,0.00001,0.0000001,0.000000001},
                xlabel={$h$},
                ymode=log,
                legend pos=south east, 
                legend style={nodes={scale=1, transform shape}},]
        \pgfplotstableread{
            x       y
            0.125     0.0000205674302721903
            0.0625    0.0000005705451280611
            0.03125   0.0000000173027472390
            0.0156    0.0000000005371203825
        } \Lduepdue
    \addplot[orange, mark=*, mark options={scale=1}, line width=0.02cm] table[x=x, y=y] \Lduepdue;
        \pgfplotstableread{
            x       y
           0.125    0.000208254764253067
           0.0625    0.000012036324943743
        0.03125   0.000000729083631469
            0.0156    0.000000046704299853
        } \Hunopdue     
        \addplot[cyan, mark=triangle*, mark options={scale=1.5}, line width=0.02cm] table[x=x, y=y] \Hunopdue;
        \pgfplotstableread{
            x       y
            0.125    0.003220767870442
            0.0625    0.000369498405268
            0.03125   0.000045456105466
            0.0156    0.000005755010478
        } \Hduepdue
        \addplot[magenta, mark=pentagon*, mark options={scale=1.5}, line width=0.02cm] table[x=x, y=y] \Hduepdue;

        \logLogSlopeTriangle{0.33}{0.175}{0.575}{3}{magenta};       
        \logLogSlopeTriangle{0.435}{0.165}{0.385}{4}{cyan};        
        \logLogSlopeTriangle{0.52}{0.185}{0.185}{5}{orange};
        
        \legend
        {$L^2$,~$H^1$,$H^2$}
    \end{groupplot}
\end{tikzpicture}
\end{minipage}
\caption{Relative errors in the norms/seminorms defined in~\eqref{eq:76} with $C^1$-regular splines in time and maximal regularity splines in space for~$p=3$ (left plot) and~$p=4$ (right plot), by varying the mesh size~$h_{\bx}=h_t=h$, for the exact solution as in~\eqref{eq:75}.}
\label{fig:5}
\end{figure}

\noindent
In Figure~\ref{fig:6}, we report the results for maximal regularity splines in time, for~$p = 2$ and~$p = 3$. The results shown validate 
the quasi-optimal order of convergence conjectured at the beginning of Section \ref{sec:32}.

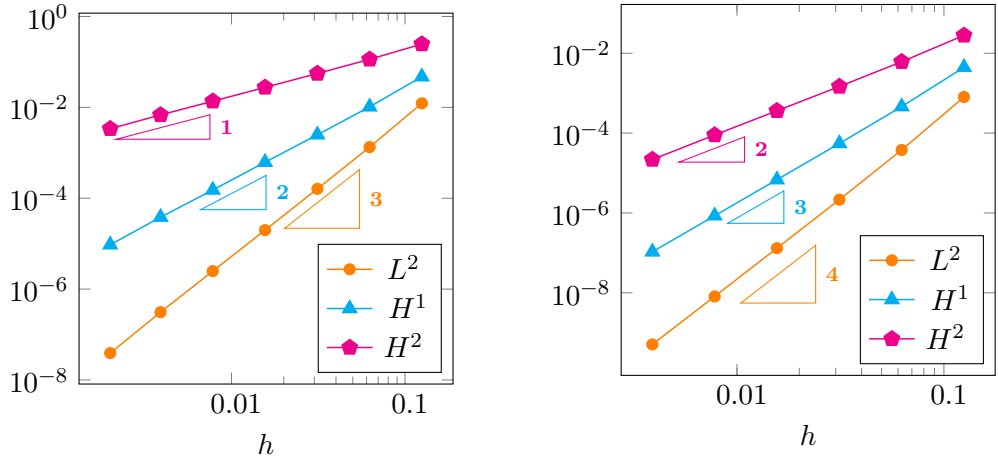
\begin{figure}[h!]
\centering
\begin{minipage}{0.45\textwidth}
\begin{tikzpicture}
\begin{groupplot}[group style={group size=2 by 1},height=6.5cm,width=6.5cm, every axis label={font=\normalsize}]
        
    \nextgroupplot[xmode=log, 
                log x ticks with fixed point,
                xtick={0.01,0.1},
                ytick={1,0.01,0.0001,0.000001,0.00000001},
                xlabel={$h$},
                ymode=log,
                legend pos=south east, 
                legend style={nodes={scale=1, transform shape}},]
        \pgfplotstableread{
            x       y
            0.125     0.012276257552353
            0.0625    0.001333830327579
            0.03125   0.000160870808909
            0.0156    0.000019867024802
            0.0078    0.000002486968232
            0.0039    0.000000310913053
            0.0020    0.000000038985725
        } \Lduepdue
    \addplot[orange, mark=*, mark options={scale=1}, line width=0.02cm] table[x=x, y=y] \Lduepdue;
        \pgfplotstableread{
            x       y
            0.125    0.047210237165267
            0.0625    0.010273676899700
            0.03125   0.002488834541324
            0.0156    0.000620388669461
            0.0078    0.000151658728011
            0.0039    0.000038285131298
            0.0020    0.000009449208366
        } \Hunopdue     
        \addplot[cyan, mark=triangle*, mark options={scale=1.5}, line width=0.02cm] table[x=x, y=y] \Hunopdue;
        \pgfplotstableread{
            x       y
            0.125     0.247606045103127
            0.0625    0.114078981147447
            0.03125   0.055627094070887
            0.0156    0.027545907615786
            0.0078    0.013656442406419
            0.0039    0.006823743369201
            0.0020    0.003388274287899
        } \Hduepdue
        \addplot[magenta, mark=pentagon*, mark options={scale=1.5}, line width=0.02cm] table[x=x, y=y] \Hduepdue;

        \logLogSlopeTriangle{0.35}{0.25}{0.66}{1}{magenta};        
        \logLogSlopeTriangle{0.5}{0.175}{0.47}{2}{cyan};        
        \logLogSlopeTriangle{0.75}{0.2}{0.42}{3}{orange};
        
        \legend
        {$L^2$,~$H^1$,$H^2$}
    \end{groupplot}
\end{tikzpicture}
\end{minipage}
\begin{minipage}{0.45\textwidth}
\begin{tikzpicture}
\begin{groupplot}[group style={group size=2 by 1},height=6.5cm,width=6.5cm, every axis label={font=\normalsize}]
        
    \nextgroupplot[xmode=log, 
                log x ticks with fixed point,
                xtick={0.01,0.1},
                ytick={1,0.01,0.0001,0.000001,0.00000001},
                xlabel={$h$},
                ymode=log,
                legend pos=south east, 
                legend style={nodes={scale=1, transform shape}},]
        \pgfplotstableread{
            x       y
            0.125     0.000802930964193882
            0.0625    0.000037775574775117
            0.03125   0.000002151292859217
            0.0156    0.000000131191524704
            0.0078    0.000000008112117510
            0.0039    0.000000000507729076
        } \Lduepdue
    \addplot[orange, mark=*, mark options={scale=1}, line width=0.02cm] table[x=x, y=y] \Lduepdue;
        \pgfplotstableread{
            x       y
                      0.125    0.004438520134026
           0.0625    0.000460930570275
        0.03125   0.000054917391259
            0.0156    0.000006800540289
            0.0078    0.000000849435412
            0.0039    0.000000106017110 
        } \Hunopdue     
        \addplot[cyan, mark=triangle*, mark options={scale=1.5}, line width=0.02cm] table[x=x, y=y] \Hunopdue;
        \pgfplotstableread{
            x       y
            0.125    0.028026728732756
            0.0625    0.006133156547305
            0.03125   0.001469600328859
            0.0156    0.000362189652102
            0.0078    0.000089556850090
            0.0039    0.000021799293646
        } \Hduepdue
        \addplot[magenta, mark=pentagon*, mark options={scale=1.5}, line width=0.02cm] table[x=x, y=y] \Hduepdue;

        \logLogSlopeTriangle{0.33}{0.175}{0.575}{2}{magenta};       
        \logLogSlopeTriangle{0.435}{0.15}{0.41}{3}{cyan};        
        \logLogSlopeTriangle{0.52}{0.2}{0.195}{4}{orange};
        
        \legend
        {$L^2$,~$H^1$,$H^2$}
    \end{groupplot}
\end{tikzpicture}
\end{minipage}
\caption{Relative errors in the norms/seminorms defined in~\eqref{eq:76} with maximal regularity splines both in space and in time for~$p=2$ (left plot) and~$p=3$ (right plot), by varying the mesh size~$h_{\bx}=h_t=h$, for the exact solution as in~\eqref{eq:75}.}
\label{fig:6}
\end{figure}

\section{Conclusion}
In this paper, we studied a conforming space--time method for the wave equation based on a new second-order variational formulation, with smooth spline discretization in time. The method is proven to be unconditionally stable. 

\noindent
The scheme is obtained by discretizing first an associated ODE in time, with stability relying on two key components: the modification of the test functions through an appropriate isomorphism, and the inclusion of a term that contains the first derivative at the initial time. The resulting discretization of the ODE is stated in~\eqref{eq:21} or, equivalently, in~\eqref{eq:22}. The proposed method yields error estimates with respect to the mesh size that are suboptimal by one order in standard Sobolev norms. However, for certain choices of approximation spaces, it achieves quasi-optimal convergence. In particular, we prove this for $C^1$-regular splines of even polynomial degree, and present numerical evidence suggesting that this may also apply to splines with maximal regularity, irrespective of the degree. Based on numerical findings, our conjecture is that, for splines, quasi-optimal convergence occurs whenever the difference between degree and regularity is odd. A notable case where the conjecture is valid but not proven is that of splines with maximal regularity. However, even if the order of convergence were suboptimal by one and switching from~$p$- to~$(p+1)$-degree maximal regularity splines were required to achieve a convergence rate of~$p$ for the~$H^1$ error, the increase in degrees of freedom would be minimal. Numerical experiments confirm the sharpness of the results.

\noindent
Building on the analysis for the ODE problem, we then studied the numerical scheme for the wave equation stated in~\eqref{eq:57} or, equivalently, in~\eqref{eq:58}. This scheme is proven to be unconditionally stable, for any choice of conforming discrete space. The convergence rates are always quasi-optimal in space, and quasi-optimality in time is achieved under the same conditions as in the ODE case.

\noindent
We remark that, if we remove the exponential weight in~\eqref{eq:22}, the same properties of stability and convergence can be still observed numerically. In that case, for splines with maximal regularity, a theoretical analysis could be performed exploiting the quasi-Toeplitz structure of the system matrix as in~\cite{FerrariFraschini2024,FerrariFraschiniLoliPerugia2024}; however, a more general variational argument following the approach of this paper remains to be developed. Finally, important aspects not yet addressed are the efficient implementation of the space--time method and the possibility to reformulate it as a time-marching scheme as, e.g., in~\cite{Tani2017,LangerZank2021, LoliSangalli2025}, also in view of an assessment of the computational cost and comparative studies.

\section{Acknowledgments}
The authors sincerely thank Lorenzo Mascotto for bringing reference~\cite{Walkington2014} to their attention and for the insightful discussions, from which this research originated.

\noindent
This research was supported by the Austrian Science Fund (FWF) project \href{https://doi.org/10.55776/F65}{10.55776/F65} (IP) and \href{https://doi.org/10.55776/P33477}{10.55776/P33477} (MF, IP). MF is member of the Gruppo Nazionale Calcolo Scientifico-Istituto Nazionale di Alta Matematica (GNCS-INdAM).

\bibliography{mybibliography}{}
\bibliographystyle{plain}

\appendix

\section{Orthogonal polynomials with exponential-like weight}\label{app:A}

In this appendix, we prove some results on polynomials that are orthogonal with respect to exponential-like weight function. In particular, we prove that the moments associated with these polynomials exhibit a behavior similar to those of the Legendre polynomials. These results are employed in deriving error estimates for the non-local projection operator defined in Section~\ref{sec:33}.

\noindent
Consider an interval~$(0,T)$, $T>0$ and, a uniform mesh $\{t_j = jh \mid j = 0,\ldots,N\}$ with mesh size $h = T/N$, and the Legendre polynomials of degree~$r$, $\{L_r^i\}_{r \ge 0}$, over the interval $[t_{i-1},t_i]$, $i=1,\ldots,N$. We fix the normalization condition $L_r^i(t_i)=1$ for all $r \ge 0$. The polynomials~$\{L_r^i\}_{r \ge 0}$ can be obtained as a translation and scaling of the Legendre polynomials $\{L_r\}_{r \ge 0}$ defined in $[0,1]$, which satisfy $L_r(1)=1$, as follows:
\begin{equation} \label{eq:77}
    L^i_r(t) = L_r\left(\frac{t-t_{i-1}}{h}\right) \quad \text{for~} t \in [t_{i-1},t_i]. 
\end{equation}
From~\eqref{eq:77}, we obtain $L_r^i(t_{i-1})=L_r(0) = (-1)^r$, as well as the identity
\begin{equation} \label{eq:78}
    (L_r^i,L_r^i)_{L^2(t_{i-1},t_i)} = \mathcal{O}(h),
\end{equation}
where the implicit constant depends only on~$r$. For a fixed index~$i$, defining $q_k(t) := (t-t_{i-1})^k$, we further obtain
\begin{equation} \label{eq:79}
    (L_r^i,q_k)_{L^2(t_{i-1},t_i)} = h^{k+1} \int_0^1 L_r(t) t^k \, \dd t = \begin{cases}
    0 & k < r,
    \\ \mathcal{O}(h^{k+1}) & k\ge r,
    \end{cases}
\end{equation}
with the implicit constant not depending on the index $i$.

\noindent
Consider now polynomials $\{P_r^i\}_{r \ge 0}$ over the interval $[t_{i-1},t_i]$, which are orthogonal with respect to the scalar product
$(u,v)_w := \int_0^T u(t) v(t) w(t) \dd t$, where~$w(t)$ is a weight function (non-negative, integrable, and with a finite number of zeros). Suppose that the weight function $w(t): [-T,T]\to \R^+$ satisfies the following two assumptions:
\begin{equation} \label{eq:80}
    \begin{aligned}
         \text{for all~} t, s \in [-T,T]\ \text{with}\ t+s\in [-T,T],  &\qquad w(t+s) = w(t)w(s),
        \\  w(t) = 1 + \mathcal{O}(t),  &\qquad \text{as~} |t| \to 0.
    \end{aligned}
\end{equation}
\begin{remark}
The weight $w(t) = e^{-t/T}$ considered in the proof of Proposition \ref{prop:315} satisfies these two assumptions.
\end{remark}
\noindent
We fix the normalization condition $P_r^i(t_i)=1$ for all $r \ge 0$. These polynomials are invariant under translations. Indeed, for $k \ne r$, we compute 
\begin{equation*}
    0 = \int_0^h P^1_r(t) P^1_k(t) w(t) \, \dd t = w(-t_{i-1}) \int_{t_{i-1}}^{t_i} P^1_r(t-t_{i-1}) P_k^1(t-t_{i-1}) w(t) \, \dd t.
\end{equation*}
From this, we deduce 
\begin{equation} \label{eq:81}
    P_r^i(t) = P_r^1(t-t_{i-1}).
\end{equation}
We prove the following lemma.
\begin{lemma} \label{lem:A2}
Let $\{P_r^1\}_{r \ge 0}$ be the orthogonal polynomials in $[0,h]$ with respect to a weight function $w(t)$ satisfying assumptions \eqref{eq:80}, and normalized such that $P_r^1(h)=1$. Then, for all $t \in [0,h]$, 
\begin{equation*}
    P_r^1(t) = L_r(t/h) + \mathcal{O}(h) \quad \text{as~} h \to 0,
\end{equation*}
where $\{L_r\}_{r \ge 0}$ are the Legendre polynomials in $[0,1]$, normalized such that $L_r(1)=1$. 
\end{lemma}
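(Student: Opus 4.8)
The plan is to rescale the problem to the reference interval $[0,1]$ and treat it as a perturbation, in the weight, of the constant-weight (i.e.\ Legendre) case. Introduce the variable $s = t/h \in [0,1]$ and set $\widetilde{P}_r(s) := P_r^1(hs)$ and $w_h(s) := w(hs)$. By the change of variables, the polynomials $\{\widetilde{P}_r\}_{r\ge 0}$ are orthogonal on $[0,1]$ with respect to the weight $w_h$ and satisfy $\widetilde{P}_r(1)=1$; moreover, by the second assumption in~\eqref{eq:80} there is a constant $C>0$ with $|w_h(s)-1| = |w(hs)-1| \le C h s \le C h$ for all $s \in [0,1]$, so that $w_h = 1 + \mathcal{O}(h)$ uniformly on $[0,1]$. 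It therefore suffices to prove $\widetilde{P}_r(s) = L_r(s) + \mathcal{O}(h)$ uniformly on $[0,1]$. (Note that only the second property in~\eqref{eq:80} is used here; the multiplicativity was needed earlier, for the translation identity~\eqref{eq:81}.)

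Next I would expand $\widetilde{P}_r$ in the Legendre basis, $\widetilde{P}_r = \sum_{k=0}^r c_k L_k$, and determine the coefficient vector $c = (c_0,\dots,c_r)^\top$ from a linear system. The orthogonality of $\widetilde{P}_r$ to all polynomials of degree $<r$ with respect to $w_h$, tested against $L_0,\dots,L_{r-1}$, gives $\sum_{k=0}^r G_{jk}(h)\, c_k = 0$ for $j=0,\dots,r-1$, where $G_{jk}(h) := \int_0^1 L_j L_k \, w_h \, \mathrm{d}s$; the normalization $\widetilde{P}_r(1)=1$ together with $L_k(1)=1$ yields $\sum_{k=0}^r c_k = 1$. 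Collecting these into $M(h)\, c = e$ with $e=(0,\dots,0,1)^\top$, the splitting
\[
    G_{jk}(h) = \int_0^1 L_j L_k\,\mathrm{d}s + \int_0^1 L_j L_k (w_h-1)\,\mathrm{d}s = \delta_{jk}\gamma_k + \mathcal{O}(h),
\]
with $\gamma_k = \|L_k\|^2_{L^2(0,1)} = (2k+1)^{-1}$ and the error controlled by $|w_h-1|\le Ch$ and Cauchy--Schwarz, shows $M(h) = M(0) + \mathcal{O}(h)$. At $h=0$ the $(j,r)$ entries with $j\le r-1$ vanish, so $M(0)$ is block lower-triangular with $\det M(0) = \gamma_0\cdots\gamma_{r-1} \ne 0$, and its unique solution is $c(0) = (0,\dots,0,1)^\top$, i.e.\ $\widetilde{P}_r = L_r$ in the unperturbed case.

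Finally, since $M(0)$ is invertible and $M(h) = M(0)+\mathcal{O}(h)$, for $h$ small enough $M(h)$ is invertible with $M(h)^{-1} = M(0)^{-1} + \mathcal{O}(h)$ (Neumann series), whence $c(h) = M(h)^{-1} e = c(0) + \mathcal{O}(h)$. Thus $c_r = 1 + \mathcal{O}(h)$ and $c_k = \mathcal{O}(h)$ for $k<r$, and since the $L_k$ are fixed bounded functions on $[0,1]$,
\[
    \widetilde{P}_r(s) - L_r(s) = (c_r-1)L_r(s) + \sum_{k=0}^{r-1} c_k L_k(s) = \mathcal{O}(h)
\]
uniformly in $s$. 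Undoing the scaling gives $P_r^1(t) = \widetilde{P}_r(t/h) = L_r(t/h) + \mathcal{O}(h)$, as claimed.

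The main obstacle I anticipate is making the perturbation argument fully uniform and quantitative: one must verify that the $\mathcal{O}(h)$ bounds on the entries of $M(h)$ translate into an $\mathcal{O}(h)$ bound on $M(h)^{-1}$, hence on $c(h)$, with a constant depending only on $r$ (through $\gamma_0,\dots,\gamma_{r-1}$ and $\sup_{[0,1]}|L_k|$) and on the constant $C$ in the weight estimate, and that the resulting $\mathcal{O}(h)$ in the conclusion is uniform in $s\in[0,1]$. Everything else reduces to a routine change of variables and a determinant computation.
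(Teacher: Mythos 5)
Your proposal is correct, but it proves Lemma~\ref{lem:A2} by a genuinely different route than the paper. The paper argues by induction on $r$ through the three-term recurrence $P_{r+1}^1=(A_r^h t+B_r^h)P_r^1-C_r^hP_{r-1}^1$: it expresses the recursion coefficients via weighted moment ratios, compares them with the corresponding coefficients for $L_r(t/h)$ using the inductive hypothesis, and tracks the orders carefully (e.g.\ $A_r^h-\widetilde A_r^h=\mathcal{O}(1)$ against $\widetilde A_r^h=\mathcal{O}(1/h)$, so that $(A_r^h-\widetilde A_r^h)\,t=\mathcal{O}(h)$ on $[0,h]$). You instead rescale to $[0,1]$, expand $\widetilde P_r$ in the shifted Legendre basis, and read off the coefficient vector from the $(r+1)\times(r+1)$ system $M(h)c=e$ built from the $r$ orthogonality conditions plus the normalization; since $M(0)$ has the rows $\delta_{jk}\gamma_j$ for $j<r$ and a final row of ones, it is invertible with solution $(0,\dots,0,1)^\top$, and $M(h)=M(0)+\mathcal{O}(h)$ gives $c(h)=c(0)+\mathcal{O}(h)$ by a Neumann-series perturbation. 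Both arguments are sound and use only the second hypothesis in~\eqref{eq:80} (multiplicativity enters only via the translation identity~\eqref{eq:81}, as you correctly note); the existence of $\widetilde P_r$ with $\widetilde P_r(1)=1$ is guaranteed because the zeros of orthogonal polynomials for a positive weight lie strictly inside the interval. Your linear-algebra argument is non-inductive, makes the uniformity in $s\in[0,1]$ and the dependence of the constant on $r$ transparent, and avoids the order bookkeeping in the recursion; the paper's recurrence-based proof, on the other hand, produces the explicit asymptotics of the recursion coefficients $A_r^h,B_r^h,C_r^h$ themselves, which is closer in spirit to the moment computations of Proposition~\ref{prop:A3} that the appendix needs downstream. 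The only point you should make explicit is the uniform-in-$h$ bound $\|M(h)^{-1}\|\le 2\|M(0)^{-1}\|$ for $h$ below a threshold depending only on $r$ and the constant in $w(t)=1+\mathcal{O}(t)$, which you already flag as the main thing to verify.
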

\begin{proof}
The proof proceeds by induction on $r$. For the base case $r=0$, we have $P_0^1(t) \equiv 1$ which matches $L_0(t/h)$. For $r=1$, we make the ansatz $P_1^1(t) = a(t-h)+1$, and compute
\begin{align*}
    0 & = \int_0^h P^1_r(t) P^0_r(t) w(t) \, \dd t = \int_0^h (a(t-h)+1)w(t) \, \dd t = \int_0^h (a(t-h)+1)(1+\mathcal{O}(t)) \, \dd t
    \\ & = h \left(-a\frac{h}{2} +1\right) +  \left(-a\frac{h}{2} +1\right) \mathcal{O}(h^2).
\end{align*}
Solving for $a$, we obtain $a = \frac{2}{h} + \mathcal{O}(1)$, yielding 
\begin{equation*}
    P_1^1(t) = \frac{2}{h}(t-h) + 1 +\mathcal{O}(h).
\end{equation*}
Since the Legendre polynomial~$L_1(t)=2t-1$ satisfies $L_1(t/h) = 2\frac{t}{h} - 1$, we conclude that 
\begin{equation*}
    P_1^1(t) = L_1(t/h) + \mathcal{O}(h).
\end{equation*}   
Now, we assume that the claim holds for all $q \le r$, and we aim to prove it for $q=r+1$. In the interval $[0,h]$, the polynomials $\{P_r^1\}_{r \ge 0}$ are expressed with the three-term recursion 
\begin{equation*}
    P^1_{r+1}(t) = \left(A^h_r t + B^h_r\right) P_r^1(t) - C^h_r P_{r-1}^1(t),
\end{equation*}
where the coefficients $A^h_r, B^h_r$, and $C^h_r$ depends on $r$ and $h$. These coefficients are computed as follows. We impose the condition {$1 = P^1_{r+1}(h)$ and obtain
\begin{equation*}
    A^h_r h + B^h_r - C^h_r=1.
\end{equation*}
}
Then, multiplying by $P_r^1(t)$ and integrating with weight~$w$ over the interval $[0,h]$, we obtain
\begin{equation*}
    B^h_r = -A^h_r \, \frac{\int_0^h t [P_r^1(t)]^2 w(t) \, \dd t}{\int_0^h [P_r^1(t)]^2 w(t) \, \dd t} =: -A_r^h I_r^h.
\end{equation*}
Similarly, multiplying by $P_{r-1}^1(t)$ and integrating over $[0,h]$, we obtain
\begin{equation*}
    C_r^h = A_r^h \, \frac{\int_0^h t P_r^1(t) P^1_{r-1}(t) w(t) \, \, \dd t}{\int_0^h [P^1_{r-1}(t)]^2 w(t) \, \dd t} =: A_r^h J_r^h.
\end{equation*}
Combining these results, we find that 
\begin{equation*}
    A_r^h = \frac{1}{h-I_r^h-J_r^h}.
\end{equation*}
An analogous formula holds for the polynomials $\{L_r(t/h)\}_{r \ge 0}$ with coefficients $\widetilde{A}^h_r, \widetilde{B}^h_r$, and $\widetilde{C}^h_r$, and with the ratios $\widetilde{I}_r^h$ and $\widetilde{J}_r^h$. By the induction hypothesis, using~\eqref{eq:80}, \eqref{eq:79} and~\eqref{eq:78}, we have
\begin{align*}
    \int_0^h t [P_r^1(t)]^2 w(t) \, \dd t  = \int_0^h t [L_r(t/h) + \mathcal{O}(h)]^2 (1+\mathcal{O}(t)) \, \dd t = \int_0^h t [L_r(t/h)]^2 \, \dd t + \mathcal{O}(h^3).
\end{align*}
Similarly, we compute
\begin{align}
    \nonumber \int_0^h t P_r^1(t) P^1_{r-1}(t) w(t) \, \dd t & = \int_0^h t L_r(t/h) L_{r-1}(t/h) \,\dd t + \mathcal{O}(h^3), 
    \\ \nonumber \int_0^h [P_{r-1}^1(t)]^2 w(t) \, \dd t & = \int_0^h [L_{r-1}(t/h)]^2 \, \dd t + \mathcal{O}(h^2),
    \\ \label{eq:82} \int_0^h [P_r^1(t)]^2 w(t) \, \dd t & = \int_0^h [L_r(t/h)]^2 \, \dd t + \mathcal{O}(h^2).
\end{align}
From these relations, we obtain 
\begin{equation*}
    I_r^h = \widetilde{I}_r^h + \mathcal{O}(h^2), \quad J_r^h = \widetilde{J}_r^h + \mathcal{O}(h^2),
\end{equation*}
which lead to
\begin{align*}
    \widetilde{A}_r^h - A_r^h = \frac{1}{h-\widetilde{I}_r^h-\widetilde{J}_r^h} - \frac{1}{h-I_r^h-J_r^h} &= \frac{-I_r^h-J_r^h +\widetilde{I_r^h}+\widetilde{J_r^h}}{(h-\widetilde{I}_r^h-\widetilde{J}_r^h)(h-I_r^h-J_r^h)} 
    \\ &  = \frac{\mathcal{O}(h^2)}{(h-\widetilde{I}_r^h-\widetilde{J}_r^h) (h-\widetilde{I}_r^h-\widetilde{J}_r^h + \mathcal{O}(h^2))}
    \\ &  = \mathcal{O}(1)
\end{align*}
since $\widetilde{I}_r^h = \mathcal{O}(h)$ and $\widetilde{J}_r^h = \mathcal{O}(h)$ by \eqref{eq:78} and \eqref{eq:79}. From this, we deduce, given that~$\widetilde{A}_r^h = \mathcal{O}(1/h)$,
\begin{equation*}
    B_r^h-\widetilde{B}_r^h = -A_r^h I_r^h + \widetilde{A}_r^h \widetilde{I}_r^h = -(\widetilde{A}_r^h + \mathcal{O}(1)) (\widetilde{I}_r^h + \mathcal{O}(h^2)) + \widetilde{A}_r^h \widetilde{I}_r^h = \mathcal{O}(h),
\end{equation*}
and similarly, 
\begin{equation*}
    C_r^h-\widetilde{C}_r^h = \mathcal{O}(h).
\end{equation*}
Thus, combining these results with the inductive hypothesis, we obtain
\begin{align*}
    P^1_{r+1}(t) & = \left(A^h_r t + B^h_r\right) P_r^1(t) - C^h_r P_{r-1}^1(t)
    \\ & = \left(\widetilde{A}^h_r t + \mathcal{O}(t) + \widetilde{B}^h_r + \mathcal{O}(h) \right) P_r^1(t) - (\widetilde{C}^h_r + \mathcal{O}(h)) P_{r-1}^1(t)
    \\ & = \left(\widetilde{A}^h_r t+  \widetilde{B}^h_r + \mathcal{O}(h) \right) (L_r(t/h) + \mathcal{O}(h)) - (\widetilde{C}^h_r + \mathcal{O}(h)) (L_{r-1}(t/h) + \mathcal{O}(h))
    \\ & = (\widetilde{A}^h_r t+  \widetilde{B}^h_r)  L_r(t/h)  - \widetilde{C}^h_r L_{r-1}(t/h) + \mathcal{O}(h)
    \\ & = L_{r+1}(t/h) + \mathcal{O}(h),
\end{align*}
and the proof is complete.
\end{proof}
\noindent
Lemma~\ref{lem:A2} ensures that the polynomials $\{P_r^i\}_{r \ge 0}$ share properties similar to those of the Legendre polynomials, which are used to derive the results of~Section~\ref{sec:33}, whose proofs have been postponed to Appendix~\ref{app:B}. We summarize these properties in the following proposition.
\begin{proposition} \label{prop:A3}
Let $\{P_r^i\}_{r \ge 0}$ the orthogonal polynomials in $[t_{i-1},t_i]$ with respect to a weight function $w(t)$ satisfying assumptions \eqref{eq:80}, and normalized such that $P_r^i(t_i)=1$. Moreover, assume there exists two constants~$w_0,w_1$ such that
\begin{equation} \label{eq:83}
   0\le w_0 \le w(t) \le w_1 \quad \text{for all~} t \in [0,T].
\end{equation}
Then, for all~$r\ge 0$, the following properties hold true:
\begin{align} \label{eq:84}
    & P_r^i(t_{i-1}) = (-1)^r + \mathcal{O}(h) & \text{as~} h \to 0,
    \\ \label{eq:85} & \int_{t_{i-1}}^{t_i} [P_r^i(t)]^2 w(t) \, \dd t = \mathcal{O}(h) & \text{as~} h \to 0,
    \\ \label{eq:86} & \int_{t_{i-1}}^{t_i} P_r^i(t) q_k(t) w(t) \, \dd t = \mathcal{O}(h^{k+1}) & \text{as~} h \to 0,
\end{align}
where $q_k(t) = (t-t_{i-1})^k$.
\end{proposition}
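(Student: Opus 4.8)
The plan is to exploit the translation invariance \eqref{eq:81} together with the multiplicative property of the weight (first line of \eqref{eq:80}) to reduce all three statements to estimates on the reference interval $[0,h]$, where Lemma~\ref{lem:A2} applies directly. Concretely, writing any $t\in[t_{i-1},t_i]$ as $t=s+t_{i-1}$ with $s\in[0,h]$, the factorization $w(t)=w(s)w(t_{i-1})$ and the change of variables $s=t-t_{i-1}$ turn every integral over $[t_{i-1},t_i]$ into $w(t_{i-1})$ times the corresponding integral of $P_r^1(s)$ over $[0,h]$. Since $0\le w(t_{i-1})\le w_1$ by \eqref{eq:83}, this prefactor never affects the order in $h$.

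For \eqref{eq:84}, I would simply evaluate $P_r^i(t_{i-1})=P_r^1(0)$ and apply Lemma~\ref{lem:A2} at $t=0$, using $L_r(0)=(-1)^r$, to obtain $P_r^1(0)=(-1)^r+\mathcal{O}(h)$. For \eqref{eq:85} and \eqref{eq:86}, the first step is to record that Lemma~\ref{lem:A2} furnishes a \emph{uniform} bound $\|P_r^1\|_{L^\infty(0,h)}\le \|L_r\|_{L^\infty(0,1)}+\mathcal{O}(h)\le C$, with $C$ depending only on $r$. Combined with $w\le w_1$ from \eqref{eq:83}, this yields
\[
\int_0^h [P_r^1(s)]^2 w(s)\,\dd s \le C^2 w_1\int_0^h \dd s = \mathcal{O}(h), \qquad \Bigl|\int_0^h P_r^1(s)\,s^k\,w(s)\,\dd s\Bigr|\le C w_1\int_0^h s^k\,\dd s=\mathcal{O}(h^{k+1}),
\]
which, after reinserting $w(t_{i-1})$ and undoing the change of variables (recall $q_k(t)=(t-t_{i-1})^k$ becomes $s^k$), proves \eqref{eq:85} and \eqref{eq:86}. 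Alternatively, for \eqref{eq:85} one may invoke \eqref{eq:82} and \eqref{eq:78} directly, and to sharpen \eqref{eq:86} one can note that $w$-orthogonality of $P_r^1$ to polynomials of degree below $r$ makes the integral vanish \emph{identically} when $k<r$, so only the case $k\ge r$ relies on the crude $L^\infty$ bound.

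The substantive analytic work is entirely absorbed into Lemma~\ref{lem:A2}, so the remaining task is essentially bookkeeping of powers of $h$. The only points requiring care are that the error term in Lemma~\ref{lem:A2} be uniform in $s\in[0,h]$ (so it can be pulled out of the integrals), and that the multiplicativity relation $w(t)=w(s)w(t_{i-1})$ be applied only for arguments in $[-T,T]$, which holds here since $s,\,t_{i-1},\,t\in[0,T]$. I therefore expect no genuine obstacle: the proof is a short reduction argument built on Lemma~\ref{lem:A2} and the boundedness assumption~\eqref{eq:83}.
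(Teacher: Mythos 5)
Your proposal is correct and follows essentially the same route as the paper: reduce everything to the reference interval $[0,h]$ via the translation invariance~\eqref{eq:81} and the multiplicativity of $w$ from~\eqref{eq:80}, then apply Lemma~\ref{lem:A2} (whose $\mathcal{O}(h)$ term is indeed uniform in $t\in[0,h]$, as you note). The only difference is that for~\eqref{eq:85}--\eqref{eq:86} you use crude $L^\infty$ bounds where the paper substitutes the expansion $P_r^1(t)=L_r(t/h)+\mathcal{O}(h)$ into the integrals and invokes~\eqref{eq:78}, \eqref{eq:79}, \eqref{eq:82}; both establish the stated big-O claims, but the paper's version additionally records the leading Legendre-moment asymptotics --- in particular a lower bound of order $h$ for the quantity in~\eqref{eq:85} --- which is what Appendix~\ref{app:B} actually relies on when these integrals appear in denominators of moment ratios.
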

\begin{proof}
Using \eqref{eq:81}, we obtain $P_r^i(t_{i-1}) = P_r^1(0)$. Then, property \eqref{eq:84} follows from Lemma \ref{lem:A2} and the fact that $L_r(0) = (-1)^r$. To obtain \eqref{eq:85}, we compute with \eqref{eq:81} and \eqref{eq:80}
\begin{align*}
    \int_{t_{i-1}}^{t_i} [P_r^i(t)]^2 w(t) \dd t & = \int_{t_{i-1}}^{t_i} [P_r^1(t-t_{i-1})]^2 w(t) \dd t = w(t_{i-1}) \int_0^h [P_r^1(t)]^2 w(t) \dd t,
\end{align*}
and we conclude with \eqref{eq:82}, \eqref{eq:78} and assumption \eqref{eq:83}. Similarly, to deduce \eqref{eq:86}, we compute
\begin{align*}
    \int_{t_{i-1}}^{t_i} P_r^i(t) q_k(t) w(t) \dd t & = \int_{t_{i-1}}^{t_i} P_r^1(t-t_{i-1}) (t-t_{i-1})^k w(t) \dd t 
    \\ & = w(t_{i-1}) \int_0^h P_r^1(t) t^k w(t) \dd t
    \\ & = w(t_{i-1}) \int_0^h \left(L_r(t/h) + \mathcal{O}(h)\right) t^k (1+\mathcal{O}(t)) \, \dd t
    \\ & = w(t_{i-1}) \int_0^h L_r(t/h) t^k \, \dd t + \mathcal{O}(h^{k+2}),
\end{align*}
and we conclude with \eqref{eq:79} and \eqref{eq:83}.
\end{proof}

\section{Proofs of Propositions~\ref{prop:315} and~\ref{prop:316}}\label{app:B}

In this section, we collect the proofs of the two propositions stated in Section~\ref{sec:33}.

\subsection{Proof of Proposition~\ref{prop:315}}\label{app:B1}

The projection~$\mathcal{P}_h^p$ can also be equivalently defined  locally, for $i=1,\ldots,N$, by 
\begin{equation} \label{eq:87}
    \begin{cases}
        \mathcal{P}_h^p v (t_{i-1}^+) = \mathcal{P}_h^p v (t_{i-1}^-), &
        \\ (\mathcal{P}_h^p v, e^{-\bullet/T} q^{p-1})_{L^2(t_{i-1},t_i)} = (v, e^{-\bullet/T} q^{p-1})_{L^2(t_{i-1},t_i)} & \text{for all~} q^{p-1} \in \mathbb{P}^{p-1}(t_{i-1},t_i),
    \end{cases}
\end{equation}
where $\mathcal{P}_h^p v (t_i^{\pm}) := \lim_{t \to t_i^{\pm}} \mathcal{P}_h^p v (t)$, with the initial condition $\mathcal{P}_h^p v (t_0^-) =  v(0)$.

\noindent
Assume that $u_1$ and $u_2$ are two polynomials in~$\P^p(t_{i-1},t_i)$ satisfying~\eqref{eq:87}. Then, we have $u_1(t_{i-1}) = u_2(t_{i-1})$ and the difference $u_1-u_2$ can be expanded as 
\begin{equation*}
    u_1(t) - u_2(t) = \sum_{r=0}^p c_r P_r^i(t) \quad \text{for~} t \in [t_{i-1},t_i],
\end{equation*}
where $P_r^i$ are the orthogonal polynomials with respect to the weighted product~$(\cdot,e^{-\bullet/T} \cdot)_{L^2(t_{i-1},t_i)}$, normalized such that $P_r^i(t_i)=1$. For~$1\le r\le p,$ we have 
\begin{equation*}
    c_r: = \frac{(u_1-u_2, e^{-\bullet/T} P_r^i)_{L^2(t_{i-1},t_i)}}{(P_r^i, e^{-\bullet/T} P_r^i)_{L^2(t_{i-1},t_i)}}.
\end{equation*}
From the second condition in \eqref{eq:87}, we obtain $c_r = 0$ for $r=0,\ldots,p-1$. Furthermore, with the condition $u_1(t_{i-1}) = u_2(t_{i-1})$, we also conclude $c_p = 0$ since the zeros of an orthogonal polynomial lie strictly within the interval (see, e.g., \cite[Lemma 3.2]{Iserles2009}). This implies uniqueness, and the existence follows from finite dimensionality.

\noindent
Setting for all $i = 1,\ldots,N$, $\mathcal{P}_h^p v$ can be characterized as follows:
\begin{equation} \label{eq:88}
    \mathcal{P}_h^p v (t) = \Pi^{e,i}_{p-1} v (t) + \alpha_p^i P_p^i(t), \quad \text{for~} t \in [t_{i-1},t_i]   
\end{equation}
with $\Pi_{p-1}^{e,i} : L^2(t_{i-1}, t_i) \to \mathbb{P}^{p-1}(t_{i-1},t_i)$ is the projection 
with respect to the weighted scalar product $(\cdot,e^{-\bullet/T} \cdot)_{L^2(t_{i-1},t_i)}$ in $\mathbb{P}^{p-1}(t_{i-1},t_i)$, and $\alpha_p^i$ is chosen to ensure that $\mathcal{P}_h^p v \in C^0(0,T)$, namely, for $i=1,\ldots,N$ as
\begin{equation} \label{eq:89}
    \alpha_p^i = \bigl(P_p^i(t_{i-1})\bigl)^{-1} (\mathcal{P}_h^p v (t_{i-1}^-) - \Pi_{p-1}^{e,i} v (t_{i-1})).
\end{equation}
To prove~\eqref{eq:49}, using the local characterization in~\eqref{eq:88} (and~\eqref{eq:89}), we recursively obtain
\begin{equation*}
    \mathcal{P}_h^p v(t_i) = \Pi_{p-1}^{e,i} v (t_i) + \alpha_p^i P_p^i(t_i) = \Pi_{p-1}^{e,i} v (t_i) + \bigl(P_p^i(t_{i-1})\bigl)^{-1} (\mathcal{P}_h^p v (t_{i-1}) - \Pi_{p-1}^{e,i} v (t_{i-1})),
\end{equation*}
where we also used $P_p^i(t_i)=1$. For~$i=1,\ldots,N$, define 
\begin{equation} \label{eq:90}
    E^i_p(v):= \mathcal{P}_h^p v (t_i) - v(t_i). 
\end{equation}
In the rest of this proof, for brevity, we omit the dependence on~$v$ and we simply write~$E^i_p$. Note that, for $i=1,\ldots,N$,
\begin{equation} \label{eq:91}
\begin{aligned}
     P_p^i(t_{i-1}) E^i_p - E_p^{i-1} & = P_p^i(t_{i-1}) \mathcal{P}_h^p v(t_i) - P_p^i(t_{i-1}) v(t_i) - \mathcal{P}_h^p v(t_{i-1}) + v(t_{i-1})
    \\ & = P_p^i(t_{i-1}) \Pi_{p-1}^{e,i} v(t_i) - \Pi_{p-1}^{e,i} v(t_{i-1}) - P_p^i(t_{i-1}) v(t_i) + v(t_{i-1}) 
    \\ & = P_p^i(t_{i-1}) \left(\Pi_{p-1}^{e,i} v (t_i) - v(t_i)\right) - \left(\Pi_{p-1}^{e,i} v(t_{i-1}) - v(t_{i-1})\right).
\end{aligned}
\end{equation}
By performing a Taylor expansion around $t_{i-1}$, we obtain, for $t \in [t_{i-1},t_i]$,
\begin{equation} \label{eq:92}
    v(t) = \sum_{j=0}^{p+1} \frac{(t-t_{i-1})^j}{j!} \partial_t^j v(t_{i-1})  + \frac{(t-t_{i-1})^{p+2}}{(p+2)!}  \partial_t^{p+2} v(\xi_i(t)),
\end{equation}
for some~$\xi_i(t) \in [t_{i-1},t]$. Let us define $q_j(t) := (t-t_{i-1})^j$. Since~$\Pi_{p-1}^{e,i}q_j\equiv q_j$, we clearly have, for $j=0,\ldots,p-1$,
\begin{equation} \label{eq:93}
    P_p^i(t_{i-1}) \left(\Pi_{p-1}^{e,i} q_j(t_i) - q_j(t_i)\right) - \left(\Pi_{p-1}^{e,i} q_j(t_{i-1}) - q_j(t_{i-1})\right) = 0.
\end{equation}
Now, we show that~\eqref{eq:93} is also valid for $j=p$. First, as~$q_p(t_i)=h^p$ and~$q_p(t_{i-1})=0$, we have
\begin{equation} \label{eq:94}
\begin{aligned}
    & P_p^i(t_{i-1}) \left(\Pi_{p-1}^{e,i} q_p(t_i) - q_p(t_i) \right) - \left(\Pi_{p-1}^{e,i} q_p(t_{i-1}) - q_p(t_{i-1})\right)
    \\ & \hspace{2cm} =  P_p^i(t_{i-1}) \left(\Pi_{p-1}^{e,i} q_p(t_i) - h^p \right) - \Pi_{p-1}^{e,i} q_p(t_{i-1}).
\end{aligned}
\end{equation}
Note that we can characterize the projection of $q_p$ with
\begin{equation} \label{eq:95}
    \Pi^{e,i}_{p-1} q_p(t) = q_p(t) - \frac{(q_p, e^{-\bullet/T} P_p^i)_{L^2(t_{i-1},t_i)}}{(P^i_p, e^{-\bullet/T} P^i_p)_{L^2(t_{i-1},t_i)}} P_p^i(t),
\end{equation}
from which we deduce
\begin{equation} \label{eq:96}
    \Pi^{e,i}_{p-1} q_p(t_i) = h^p - \frac{(q_p, e^{-\bullet/T} P_p^i)_{L^2(t_{i-1},t_i)}}{(P_p^i, e^{-\bullet/T} P_p^i)_{L^2(t_{i-1},t_i)}} P_p^i(t_i) = h^p - \frac{(q_p, e^{-\bullet/T} P_p^i)_{L^2(t_{i-1},t_i)}}{(P_p^i, e^{-\bullet/T} P_p^i)_{L^2(t_{i-1},t_i)}}.
\end{equation}
We also compute
\begin{equation} \label{eq:97}
    \Pi^{e,i}_{p-1} q_p(t_{i-1}) = - \frac{(q_p, e^{-\bullet/T} P_p^i)_{L^2(t_{i-1},t_i)}}{(P^i_p, e^{-\bullet/T} P_p^i)_{L^2(t_{i-1},t_i)}} P_p^i(t_{i-1}).
\end{equation}
Inserting~\eqref{eq:96} and~\eqref{eq:97} into~\eqref{eq:94}, we conclude that \eqref{eq:93} is also valid for $j=p$. From this, by inserting \eqref{eq:92} into \eqref{eq:91}, we obtain
\begin{align*}
    P_p^i(t_{i-1}) E_p^i - E_p^{i-1}  = I_p^i + J_p^i
\end{align*}
with 
\begin{align*}
    I_p^i(v)&:= \frac{1}{(p+1)!} \partial_t^{p+1} v(t_{i-1}) \left( P_p^i(t_{i-1})  \left(\Pi_{p-1}^{e,i} q_{p+1}(t_i) - h^{p+1}\right) - \Pi_{p-1}^{e,i} q_{p+1}(t_{i-1}) \right) 
    \\ J_p^i(v) & := \frac{1}{(p+2)!} \left(P_p^i(t_{i-1}) \left(\Pi_{p-1}^{e,i} q_{p+2} \partial_t^{p+2} v(\xi_i)(t_i) - h^{p+2} \partial_t^{p+2} v (\xi_i(t_i)) \right) \right.
    \\ & \left. \hspace{9cm}- \Pi_{p-1}^{e,i} q_{p+2} \partial_t^{p+2} v(\xi_i) (t_{i-1}) \right).
\end{align*}
From the stability in $L^\infty$ of the weighted $L^2$-projection and property~\eqref{eq:84} of Proposition~\ref{prop:A3} in Appendix~\ref{app:A}, we obtain the following bound for~$J_p^i(v)$:
\begin{equation} \label{eq:98}
    |J_p^i(v)|\le Ch^{p+2}\norm{\partial_t^{p+2}v}_{L^{\infty}(t_{i-1},t_i)}.
\end{equation}
Here and throughout the proof,~$C$ denotes a positive constant only depending on~$p$ and~$T$, which may vary with each occurrence.
For the term inside the brackets in~$I_p^i(v)$, using the analogous expressions from~\eqref{eq:96} and~\eqref{eq:97} for $p+1$, along with Proposition \ref{prop:A3} in Appendix~\ref{app:A}, we obtain
\begin{align*}
    &P_p^i(t_{i-1}) \left(\Pi_{p-1}^{e,i} q_{p+1}(t_i) - h^{p+1}\right) - \Pi_{p-1}^{e,i} q_{p+1}(t_{i-1}) \\
    &\qquad =\frac{(q_{p+1}, e^{-\bullet/T} P_{p+1}^i)_{L^2(t_{i-1},t_i)}}{(P^i_{p+1}, e^{-\bullet/T} P_{p+1}^i)_{L^2(t_{i-1},t_i)}}\left(P_{p+1}^i(t_{i-1})-P_p^i(t_{i-1})\right)
    = C h^{p+1} + \mathcal{O}(h^{p+2}),
\end{align*}
where also the constant in the $\mathcal{O}$-notation depend only on $p$ and $T$. Therefore, we can write
\begin{equation*} 
    P_p^i(t_{i-1}) E_p^i - E_p^{i-1}  = Ch^{p+1} \partial_t^{p+1} v(t_{i-1}) + \mathcal{O}(h^{p+2}) \partial_t^{p+1} v(t_{i-1}) + J_p^i(v).
\end{equation*}
We use again property~\eqref{eq:84} of Proposition~\ref{prop:A3} to rewrite the left-hand side of the above equation and obtain
\begin{equation} \label{eq:99}
    (-1)^p E_p^i - E_p^{i-1} + \mathcal{O}(h) E^i_p = Ch^{p+1} \partial_t^{p+1} v(t_{i-1}) + \mathcal{O}(h^{p+2}) \partial_t^{p+1} v(t_{i-1}) + J_p^i(v).
\end{equation}
By proceeding recursively, taking into account that~$E_p^0=0$, we obtain
\begin{equation*}
\begin{aligned}
    |E_p^i| & \le \left(1+\mathcal{O}(h)\right)^{i} \left(\left(Ch^{p+1} + \mathcal{O}(h^{p+2})\right) \sum_{j=1}^{i} |\partial_t^{p+1} v(t_{j-1})| + \sum_{j=1}^i |J_p^{j}(v)
    |\right)
    \\ & =\left(1+i\,\mathcal{O}(h)\right)\left(\left(Ch^{p+1} + \mathcal{O}(h^{p+2})\right) \sum_{j=1}^{i} |\partial_t^{p+1} v(t_{j-1})| + \sum_{j=1}^i |J_p^{j}(v)
    |\right)\\
    & \le C h^p \left( \| \partial_t^{p+1} v\|_{L^\infty(0,T)} + \| \partial_t^{p+2} v\|_{L^\infty(0,T)}\right),
\end{aligned}
\end{equation*}
where, in the last step, we have also used~\eqref{eq:98}.
This estimate is sharp for even values of~$p$, leading to the bound in~\eqref{eq:49} for this case. However, 
when~$p$ is odd, we prove that an additional order of converge is achieved. Until this point, the fact that~$p$ is odd has not been used, but it becomes crucial at this point.

\noindent
Let us fix~$p$ odd. There exists~$D\in \R$ only depending on~$p$ and~$T$ such that~\eqref{eq:99} becomes
\begin{equation*}
     E_p^i + Dh E^i_p +E_p^{i-1} = -Ch^{p+1} \partial_t^{p+1} v(t_{i-1}) + \mathcal{O}(h^{p+2}) \partial_t^{p+1} v(t_{i-1}) - J_p^i(v).
\end{equation*}
Using $E_p^0=0$, we first compute
\begin{align*}
    (1+Dh)E_p^1 = -C h^{p+1} \partial_t^{p+1} v(0) + \mathcal{O}(h^{p+2}) \partial_t^{p+1} v(0) - J_p^1(v),
\end{align*}
which implies
\begin{equation*}
    E_p^1 = -C h^{p+1} \partial_t^{p+1} v(0)+h^{p+2}G_p^1(v),
\end{equation*}
with $G_p^1(v)$ such that, from~\eqref{eq:98}, for a constant~$C_G>0$ only depending on~$p$ and~$T$,
\begin{equation*}
    |G_p^1(v)| \le C_G \left( \| \partial_t^{p+1} v\|_{L^\infty(0,T)} + \| \partial_t^{p+2} v\|_{L^\infty(0,T)}\right).
\end{equation*}
Next, we obtain
\begin{equation} \label{eq:100}
\begin{aligned}
    (1+Dh) &E_p^2 = E_p^2+Dh E_p^2 +E_p^1-E_p^1\\
    & = C h^{p+1} (-\partial_t^{p+1} v(t_1) + \partial_t^{p+1} v(0)) + \mathcal{O}(h^{p+2}) \partial_t^{p+1} v(t_{1}) - J_p^2(v) -h^{p+2}G_p^1(v)\\ 
    & = -C h^{p+1} \int_0^{t_1} \partial_s^{p+2} v(s) \dd s + \mathcal{O}(h^{p+2}) \partial_t^{p+1} v(t_{1}) - J_p^2(v) -h^{p+2}G_p^1(v),
\end{aligned}
\end{equation}
which implies
\begin{equation*}
    E_p^2 = -C h^{p+1} \int_0^{t_1} \partial_s^{p+2} v(s) \dd s+h^{p+2}G_p^2(v),
\end{equation*}
with $G_p^2(v)$ such that
\begin{equation*}
    |G_p^2(v)| \le 2C_G \left( \| \partial_t^{p+1} v\|_{L^\infty(0,T)} + \| \partial_t^{p+2} v\|_{L^\infty(0,T)}\right).
\end{equation*}
Suppose now that, for an even index~$i$, we have
\begin{equation*}
    E_p^i = -C h^{p+1} \int_0^{t_i} \partial_s^{p+2} v(s) \dd s +h^{p+2}G_p^i(v),
\end{equation*}
with $G_p^i(v)$ such that
\begin{equation} \label{eq:101}
    |G_p^i(v)| \le i\,C_G \left( \| \partial_t^{p+1} v\|_{L^\infty(0,T)} + \| \partial_t^{p+2} v\|_{L^\infty(0,T)}\right).
\end{equation}
Then, for the next odd index~$i+1$, we deduce
\begin{align*}
    (1+Dh)E_p^{i+1} &= E_p^{i+1}+Dh E_p^{i+1} +E_p^{i}-E_p^{i}\\ 
    & = C h^{p+1} \left(-\partial_t^{p+1} v(t_i)+ \int_0^{t_i} \partial_s^{p+2} v(s)\dd s\right)\\
    & \qquad + \mathcal{O}(h^{p+2}) \partial_t^{p+1} v(t_{i}) - J_p^{i+1}(v) -h^{p+2}G_p^i(v),
\end{align*}
which implies
\begin{equation} \label{eq:102}
    E_p^{i+1} = C h^{p+1} \left(-\partial_t^{p+1} v(t_i)+ \int_0^{t_i} \partial_s^{p+2} v(s)\dd s\right)+h^{p+2}G_p^{i+1}(v),
\end{equation}
with~$G_p^{i+1}(v)$ satisfying~\eqref{eq:101} with~$i+1$ instead of~$i$.
For the next even index~$i+2$, we have
\begin{equation} \label{eq:103}
\begin{aligned}
    (1+Dh)E_p^{i+2} & = E_p^{i+2} +DhE_p^{i+2} + E_p^{i+1} - E_p^{i+1}
    \\ & = C h^{p+1} \left( -\partial_t^{p+1} v(t_{i+1}) + \partial_t^{p+1} v(t_i) - \int_0^{t_i} \partial_s^{p+2} v(s) \dd s \right) \\
    & \qquad + \mathcal{O}(h^{p+2}) \partial_t^{p+1} v(t_{i+1}) - J_p^{i+2}(v) -h^{p+2}G_p^{i+1}(v)\\
    & = - C h^{p+1} \int_0^{t_{i+1}} \partial_s^{p+2} v(s) \dd s 
    \\ & \qquad + \mathcal{O}(h^{p+2}) \partial_t^{p+1} v(t_{i+1}) - J_p^{i+2}(v) -h^{p+2}G_p^{i+1}(v),
\end{aligned}
\end{equation}
which implies
\begin{equation} \label{eq:104}
    E_p^{i+2} = -C h^{p+1} \int_0^{t_{i+1}} \partial_s^{p+2} v(s) \dd s +h^{p+2}G_p^{i+2}(v),
\end{equation}
with $G_p^{i+2}(v)$ satisfying~\eqref{eq:101} with~$i+2$ instead of~$i$. Therefore, combining~\eqref{eq:102} and~\eqref{eq:104} with~\eqref{eq:101}, recalling the definition~\eqref{eq:90}, completes the proof of~\eqref{eq:49} also for $p$ odd.\\
\noindent
We emphasize that the terms containing~$\partial_t^{p+1}v(t_{i+1})$ and $\partial_t^{p+1}v(t_i)$ appear in the second lines of \eqref{eq:100} and \eqref{eq:103} with alternating signs, which enables their reorganization into integral terms in the last equalities of these equations. This leads to an additional order of convergence compared to the case of even $p$, where the terms containing~$\partial_t^{p+1}v(t_{i+1})$ and $\partial_t^{p+1}v(t_i)$ appear with the same sign and can only be estimated from above by the sum of their absolute values.

\subsection{Proof of Proposition~\ref{prop:316}}\label{app:B2}
Let $i \in \{1,\ldots,N\}$ be fixed. We write
\begin{equation} \label{eq:105}
    \| v-\mathcal{P}_h^p v \|_{L^2(t_{i-1},t_i)} \le \| v - \Pi_p^{e,i} v \|_{L^2(t_{i-1},t_i)} + \| \mathcal{P}_h^p v - \Pi_p^{e,i} v \|_{L^2(t_{i-1},t_i)},
\end{equation}
where, as in the proof of Proposition~\ref{prop:315},~$\Pi_{p-1}^{e,i} : L^2(t_{i-1}, t_i) \to \mathbb{P}^{p-1}(t_{i-1},t_i)$ denotes the projection with respect to the weighted scalar product $(\cdot,e^{-\bullet/T} \cdot)_{L^2(t_{i-1},t_i)}$ in $\mathbb{P}^{p-1}(t_{i-1},t_i)$. From standard approximation results, the second term on the right-hand side of~\eqref{eq:105} satisfies
\begin{equation} \label{eq:106}
    \| v -  \Pi_p^{e,i} v \|_{L^2(t_{i-1},t_i)} \le C h^{p+1} \| \partial_t^{p+1} v \|_{L^2(t_{i-1},t_i)} \le C h^{p+3/2}\| \partial_t^{p+1} v \|_{L^\infty(0,T)}.
\end{equation}
Here and in the rest of this proof,~$C>0$ denote a constant only depending on~$p$ and~$T$, which may change at each occurrence. To estimate the first term, we explicitly compute, for $t \in [t_{i-1},t_i]$, using \eqref{eq:88} and \eqref{eq:89},
\begin{equation*}
    \mathcal{P}_h^p v(t) - \Pi_p^{e,i} v (t) = \Pi^{e,i}_{p-1} v (t) + \bigl(P_p^i(t_{i-1})\bigr)^{-1} (\mathcal{P}_h^p v (t_{i-1}) - \Pi_{p-1}^{e,i} v (t_{i-1})) P_p^i(t) - \Pi_p^{e,i} v(t),
\end{equation*}
where, as in the proof of Proposition~\ref{prop:315},~$P_r^i$ are orthogonal polynomials with respect to the weighted product~$(\cdot,e^{-\bullet/T} \cdot)_{L^2(t_{i-1},t_i)}$. For the difference~$\Pi_{p-1}^{e,i} v(t) - \Pi_p^{e,i} v(t)$, we have
\begin{equation*}
    \Pi_{p-1}^{e,i} v(t) - \Pi_p^{e,i} v(t) = - \frac{(v,e^{-\bullet/T}P^i_p)_{L^2(t_{i-1},t_i)}}{(P^i_p, e^{-\bullet/T}P^i_p)_{L^2(t_{i-1},t_i)}}
    P_p^i(t).
\end{equation*}
From this, in the weighted norm $\|\cdot\|^2_{L^2_e(t_{i-1},t_i)} := (\cdot,e^{-\bullet/T} \cdot)_{L^2(t_{i-1},t_i)}$, we obtain
\begin{align*}
    & \| \mathcal{P}_h^p v - \Pi_p^{e,i} v \|^2_{L^2_e(t_{i-1},t_i)} 
    \\ & \hspace{1cm} = \left|\frac{(v,e^{-\bullet/T}P^i_p)_{L^2(t_{i-1},t_i)}}{(P^i_p, e^{-\bullet/T}P^i_p)_{L^2(t_{i-1},t_i)}}- \bigl(P_p^i(t_{i-1})\bigr)^{-1}\bigl(\mathcal{P}_h^p v (t_{i-1}) - \Pi_{p-1}^{e,i} v (t_{i-1})\bigr) \right|^2 \| P_p^i \|^2_{L^2_e(t_{i-1},t_i)}.
\end{align*}
From \eqref{eq:85} in Proposition~\ref{prop:A3}, we get $\|P_p^i\|^2_{L^2_e(t_{i-1},t_i)} = \mathcal{O}(h)$, from which
\begin{align*}
    \| \mathcal{P}_h^p v &- \Pi_p^{e,i} v \|^2_{L^2_e(t_{i-1},t_i)}\\ &= C h  \left|\frac{(v,e^{-\bullet/T}P^i_p)_{L^2(t_{i-1},t_i)}}{(P^i_p, e^{-\bullet/T} P^i_p)_{L^2(t_{i-1},t_i)}} - \bigl(P_p^i(t_{i-1})\bigr)^{-1}\bigl(\mathcal{P}_h^p v (t_{i-1}) - \Pi_{p-1}^{e,i} v (t_{i-1})\bigr) \right|^2\\
    &\le Ch \left|\frac{(v,e^{-\bullet/T} P^i_p)_{L^2(t_{i-1},t_i)}}{(P^i_p, e^{-\bullet/T} P^i_p)_{L^2(t_{i-1},t_i)}} - \bigl(P_p^i(t_{i-1})\bigr)^{-1}\left(v(t_{i-1}) - \Pi_{p-1}^{e,i} v (t_{i-1})\right) \right|^2 
    \\ & \hspace{5.2cm} + Ch \bigl|P_p^i(t_{i-1})\bigr|^{-2}
    \left| \mathcal{P}_h^p v (t_{i-1}) - v(t_{i-1}) \right|^2. 
\end{align*}
Then, from Proposition~\ref{prop:315} and Proposition~\ref{prop:A3}, we obtain
\begin{equation} \label{eq:107}
\begin{aligned}
    \| \mathcal{P}_h^p v & - \Pi_p^{e,i} v \|^2_{L^2_e(t_{i-1},t_i)} 
    \\ & \le Ch \left|\frac{(v,e^{-\bullet/T}P^i_p)_{L^2(t_{i-1},t_i)}}{(P^i_p,e^{-\bullet/T}P^i_p)_{L^2(t_{i-1},t_i)}} - \bigl(P_p^i(t_{i-1})\bigr)^{-1}\bigr(v (t_{i-1}) - \Pi_{p-1}^{e,i} v (t_{i-1})\bigr) \right|^2 
    \\ & \hspace{2.5cm} + C h^{2p+1+2(p \hspace{-0.2cm} \mod 2)} \left(\| \partial_t^{p+1} v \|_{L^\infty(0,T)} + \| \partial_t^{p+2} v\|_{L^\infty(0,T)}\right)^2.
\end{aligned}
\end{equation}
It remains to estimate the first term on the right-hand side of~\eqref{eq:107}. We expand $v$ in Taylor series around $t_{i-1}$ up to order $p$ (see~\eqref{eq:92} with $p$ instead of~$p+1$), and set $q_j(t) := (t-t_{i-1})^j$. Due to orthogonality properties, we have
\begin{equation*}
    \frac{(q_j,e^{-\bullet/T} P^i_p)_{L^2(t_{i-1},t_i)}}{(P^i_p,e^{-\bullet/T}P^i_p)_{L^2(t_{i-1},t_i)}} - \bigl(P_p^i(t_{i-1})\bigr)^{-1} \bigl(q_j (t_{i-1}) - \Pi_{p-1}^{e,i} q_j (t_{i-1})\bigr) = 0, \quad j = 0,\ldots,p-1.
\end{equation*}
This is also true for $j=p$, as can be seen from~\eqref{eq:95}.  Therefore, only the remainder term gives a contribution. Taking into account that~$q_{p+1}(t_{i-1})=0$, this term is equal to
\begin{equation*}
    \frac{1}{(p+1)!}\left( \frac{(q_{p+1}\partial^{p+1}_t v(\xi_i),e^{-\bullet/T} P^i_p)_{L^2(t_{i-1},t_i)}}{(P^i_p,e^{-\bullet/T}P^i_p)_{L^2(t_{i-1},t_i)}}+\bigl(P_p^i(t_{i-1})\bigr)^{-1}\Pi_{p-1}^{e,i}q_{p+1}\partial^{p+1}_t v(\xi_i)(t_{i-1})
\right).
\end{equation*}
The latter contribution can be estimated using the stability of the weighted $L^2$ projection in the $L^\infty$ norm, along with Proposition \ref{prop:A3}. This leads to the estimate
\begin{equation*}
    \left|\frac{(v,e^{-\bullet/T} P^i_p)_{L^2(t_{i-1},t_i)}}{(P^i_p,e^{-\bullet/T}P^i_p)_{L^2(t_{i-1},t_i)}} - \bigl(P_p^i(t_{i-1})\bigr)^{-1}\bigl(v (t_{i-1}) - \Pi_{p-1}^{e,i} v (t_{i-1})\bigr) \right| \le C h^{p+1} \| \partial_t^{p+1} v \|_{L^\infty(t_{i-1},t_i)}.
\end{equation*}
Combining this result with~\eqref{eq:107}, and using the equivalence of the weighted norm with the standard~$L^2$ norm give
\begin{align*}
    \| \mathcal{P}_h^p v - \Pi_p^{e,i} v \|_{L^2(t_{i-1},t_i)}&\le C\| \mathcal{P}_h^p v - \Pi_p^{e,i} v \|_{L^2_e(t_{i-1},t_i)} \\
    &\le C h^{p+1/2+(p\hspace{-0.2cm} \mod 2)} \left( \| \partial_t^{p+1} v \|_{L^\infty(0,T)} + \| \partial_t^{p+2} v \|_{L^\infty(0,T)} \right).
\end{align*}
Finally, by inserting this and~\eqref{eq:106} into~\eqref{eq:105}, squaring both sides and summing over all intervals, we obtain the desired result.

\end{document}